\title{%
	Homometry and direct-sum decompositions of lattice-convex sets\footnote{The final publication is available at Springer via \url{http://dx.doi.org/10.1007/s00454-016-9786-2}.
		}
}
\author{%
	Gennadiy Averkov\footnote{Faculty of Mathematics, Otto-von-Guericke-Universit\"at Magdeburg, Universit\"atsplatz 2, 39106 Magdeburg, Germany, e-mail: averkov@math.uni-magdeburg.de} \, and \, 
	Barbara Langfeld\footnote{Mathematisches Seminar, Christian-Albrechts-Universit\"at zu Kiel, 24098 Kiel, Germany, e-mail: langfeld@math.uni-kiel.de}%
}
\DeclareFixedFont{\ttb}{T1}{txtt}{bx}{n}{9} 
\DeclareFixedFont{\ttm}{T1}{txtt}{m}{n}{9}  
\definecolor{deepblue}{rgb}{0,0,0.5}
\definecolor{deepred}{rgb}{0.6,0,0}
\definecolor{deepgreen}{rgb}{0,0.5,0}
\ttb\color{deepred},    
\ttm\color{gray},
\DeclareSymbolFont{rsfs}{U}{rsfs}{m}{n}
\DeclareSymbolFontAlphabet{\mathscr}{rsfs}
\newcommand{\thmheader}[1]{{\upshape (#1.)}}
\newcommand{\aff}{{\rmcmd{aff}}}
\newcommand{\lin}{\rmcmd{lin}}
\newcommand{\bL}{\mathbb{L}}
\newcommand{\card}[1]{\left\lvert #1 \right\rvert}
\newcommand{\conv}{\rmcmd{conv}}
\newcommand{\dotvar}{\,\cdot\,}
\newcommand{\floor}[1]{\lfloor {#1} \rfloor}
\newcommand{\integer}{\mathbb{Z}}
\newcommand{\Z}{\mathbb{Z}}
\newcommand{\R}{\mathbb{R}}
\newcommand{\Q}{\mathbb{Q}}
\newcommand{\intr}{\rmcmd{int}}
\newcommand{\N}{\mathbb{N}}
\newcommand{\rational}{\mathbb{Q}}
\newcommand{\rmcmd}[1]{\mathop{\mathrm{#1}}\nolimits}
\newcommand{\setcond}[2]{\left\{#1\,:\,#2\right\}}
\newcommand{\sprod}[2]{\left\langle#1,#2\right\rangle}
\newcommand{\term}[1]{\emph{#1}}
\newcommand{\vol}{\rmcmd{vol}}
\newcommand{\bM}{\mathbb{M}}
\newcommand{\cT}{\mathcal{T}}
\newcommand{\eps}{\varepsilon}
\newcommand{\myeqref}[1]{(\ref{#1})}
\newtheoremstyle{mythmstyle}
	{\topsep}
	{\topsep}
	{\itshape}
	{}
	{\scshape}
	{.}
	{3pt}
	{}
\theoremstyle{mythmstyle}
\newtheorem{nn}{}[section]
\newtheorem{corollary}[nn]{Corollary}
\newtheorem{example}[nn]{Example}
\newtheorem{remark}[nn]{Remark}
\newtheorem{lemma}[nn]{Lemma}
\newtheorem*{lemma*}{Lemma}
\newtheorem{proposition}[nn]{Proposition}
\newtheorem{theorem}[nn]{Theorem}
\newtheorem{problem}[nn]{Problem}
\definecolor{update}{HTML}{FFFF00}
\definecolor{oldversion}{HTML}{990099}
\definecolor{commentB}{HTML}{66FF99}
\definecolor{commentG}{HTML}{33FFFF}
\begin{document}

\maketitle

\begin{abstract}
	Two sets in $\R^d$ are called homometric if they have the same covariogram, where the covariogram of a finite subset $K$ of $\R^d$ is the function associating to each $u \in \R^d$ the cardinality of $K \cap (K+u)$. Understanding the structure of homometric sets is important for a number of areas of mathematics and applications. 
	
	If two sets are homometric but do not coincide up to translations and point reflections, we call them nontrivially homometric. We study nontrivially homometric pairs of lattice-convex sets, where a set $K$ is called lattice-convex with respect to a lattice $\bM \subseteq \R^d$ if $K$ is the intersection of $\bM$ and a convex subset of $\R^d$. This line of research was initiated in 2005 by Daurat, G\'erard and Nivat and, independently, by Gardner, Gronchi and Zong. 
	
	All pairs of nontrivially homometric lattice-convex sets that have been known so far can essentially be written as direct sums $S \oplus T$ and $S \oplus (-T)$, where $T$ is lattice-convex, the underlying lattice~$\bM$ is the direct sum of $T$ and some sublattice $\bL$, and $S$ is a subset of $\bL$. We study pairs of nontrivially homometric lattice-convex sets assuming this particular form and establish a necessary and a sufficient condition for the lattice-convexity of $S \oplus T$. This allows us to explicitly describe all nontrivially homometric pairs in dimension two, under the above assumption, and to construct examples of nontrivially homometric pairs of lattice-convex sets for each $d \ge 3$.
\end{abstract}

\newtheoremstyle{itsemicolon}{}{}{\mdseries\rmfamily}{}{\itshape}{:}{ }{}
\newtheoremstyle{itdot}{}{}{\mdseries\rmfamily}{}{\itshape}{.}{ }{}
\theoremstyle{itdot}

\newtheorem*{msc*}{2010 Mathematics Subject Classification} 
\begin{msc*}
  Primary: 52C07; Secondary: 05B10, 52B20, 52C05, 78A45
\end{msc*}

\newtheorem*{keywords*}{Key words and phrases}
\begin{keywords*}
Covariogram, covariogram problem, crystallography, diffraction, discrete tomography, direct sum, homometric sets, lattice-convex set, quasicrystal,  X-ray.
\end{keywords*}

\section{Introduction}\label{sec:intro}

We assume acquaintance with basic concepts from convexity theory, the theory of polyhedra, and the geometry of numbers. For an extensive account on the background see, for example, \cite{MR1311028}, \cite{MR1940576}, \cite{MR2335496}, and \cite{Schneider2014}.

Throughout the text, let $d\in\N$, where $\N= \{1,2,3,\ldots\}$. For $X, Y \subseteq \R^d$, we define the (\emph{Minkowski}) \emph{sum}~$X + Y := \setcond{x + y }{x \in X, \ y \in Y}$, the set $-X := \setcond{-x}{x \in X}$, and the set~$X-Y:=X+(-Y)$. For $A \subseteq \R$, $X, Y \subseteq \R^d$, $a \in \R$, and $u,v \in \R^d$ we let~$A X := X A:= \setcond{a x }{a \in A, \ x \in X}$, 
$a X := X a :=\{a\} X = X\{a\}$, 
$u \pm Y := \{u\} \pm Y$, and $X \pm v := X \pm \{v\}$.  If~$k$ is the maximal possible number of affinely independent points in $X\subseteq\R^{d}$, the dimension $\dim(X)$ of $X$ is $k-1$.  Further, let $\sprod{\dotvar}{\dotvar}$ denote the standard scalar product of $\R^d$. 

\paragraph{Covariogram problems.}
For a finite subset $K$ of~$\R^d$, the \emph{covariogram} of $K$ is the function $g_{K}$ on $\R^d$ defined by
\[
	g_K(u) := \card{ K \cap (K+u)},
\]
where $\card{\dotvar}$ denotes the cardinality.
The covariogram occurs in the theory of \term{quasicrystals} and is closely related to the so-called \emph{diffraction data} of $K$ (see~\cite{quasicrystals-primer}, \cite{BaakeGrimm}, \cite{Lag00}, and \cite{Moo00}). The information provided by $g_K$ can be expressed in several equivalent algebraic, probabilistic, Fourier-analytic and tomographic terms; see \cite{RoSey82}, \cite{Daurat-Gerard-Nivat-2005} and \cite{MR2160045}. Furthermore, $g_K$ is related to the data provided by the so-called \emph{X-rays}, as explained in \cite[Theorem~4.1]{MR2160045}; see also \cite{MR1376547}. 

\newcommand{\cL}{\mathcal{L}}

The tomographic problems of reconstruction (of some geometric features) of an unknown set~$K$ from the knowledge of $g_K$ have attracted experts from different areas of mathematics and its applications; see the literature cited in~\cite{Lemke-Skiena-Smith-2003}. We call such types of problems \emph{covariogram problems}.  Note that each translation $K+t$ of $K$ by a vector $t \in \R^d$ and also each point reflection~$2 c - K$ of~$K$ with respect to a point $c \in \R^d$ has the same covariogram as $K$. This means, when no a~priori knowledge on $K$ is available, at best, $K$ can be determined by $g_K$ up to translations and point reflections. We call two finite sets $K, L$ \emph{homometric} if $g_K=g_L$, \emph{trivially homometric} if $K$ and~$L$ coincide up to translations and point reflections, and \emph{nontrivially homometric} if $K$ and~$L$ are homometric but not trivially homometric. See~\cite[Section~4]{BaakeGrimm} for the impact of nontrivial homometric pairs of lattice sets in the realm of quasicrystals.

If $K$ is centrally symmetric, that is, if $K$ is invariant under some point reflection, then~$g_K$ determines $K$ up to translations without any a~priori knowledge on $K$, that is, within the whole family of finite subsets of $\R^d$; see \cite{Averkov-detecting-cen-sym-2009}. Otherwise, however, some a~priori information on~$K$ is usually assumed. That means, the problem is to reconstruct an unknown $K$ using $g_K$ and relying on the fact that $K$ belongs to a family $\cL$, where $\cL$  represents the a~priori information on~$K$. 
If every~$L \in \cL$ homometric to $K$ is necessarily trivially homometric to $K$, then we say that $K$ is \emph{uniquely determined} by $g_K$ within $\cL$ (up to translations and point reflections).  There are choices of $\cL$, for which the unique determination of some sets $K \in \cL$ within $\cL$ is not possible. In such cases, a more appropriate aim is to describe, as explicitly as possible, the family of all sets $L \in \cL$ homometric to a given set $K \in \cL$. 

Covariogram problems can be introduced analogously in the `continuous' setting. For a compact set $K\subseteq\R^d$ with nonempty interior, $g_K(u)$ is defined to be the volume of $K \cap (K+u)$, for each~$u \in \R^d$. It is known that if $d=2$ and $K$ is a $d$-dimensional compact convex  set, then $K$ is uniquely determined by $g_K$ within the family of all $d$-dimensional compact convex sets, up to translations and point reflections; see~\cite{Averkov-Bianchi-2009}. The same holds true if $d=3$ and $K$ is a $d$-dimensional polytope; see~\cite{MR2493181}. So, in the continuous setting,  the convexity assumption on $K$ has led to a number of positive results on the reconstruction of~$K$ from the covariogram.

In 2005 the authors of \cite{Daurat-Gerard-Nivat-2005} and \cite{MR2160045} independently initiated the study of the retrieval of~$K$ from $g_K$ in the case that $K$ is a finite subset of $\R^d$ satisfying a condition analogous to convexity. As usual, a subset $\bM$ of $\R^d$ is called a lattice of rank $i \ge 0$ if, for some linearly independent vectors~$b_1,\ldots,b_i\in\R^d$, we have $\bM = \Z b_1 +  \cdots + \Z b_i$; in this case, $b_1,\ldots,b_i\in\R^d$ is called a \emph{basis} of the lattice $\bM$. From now on, let $\bM$ be a lattice of rank $d$ in $\R^d$. A subset $K$ of $\bM$ is called \emph{$\bM$-convex} if $K = C \cap \bM$ for some convex subset~$C$ of $\R^d$. In this definition, the choice of $\bM$ is not important; we could just fix $\bM=\Z^d$, but for technical reasons (which will be explained on page~\pageref{page:no-fixed-lattice-M}) it will be more convenient to consider an arbitrary lattice $\bM$ of rank $d$. Whenever the choice of $\bM$ is clear, we also say \emph{lattice-convex} rather than $\bM$-convex. The study of covariogram problems within the family of finite $\bM$-convex sets is the main motivating point for this manuscript. The covariogram problem is highly nontrivial for each dimension $d \ge 2$. For $d=1$, each $\bM$-convex set is centrally symmetric, so the problem obviously has a positive solution.

It seems that generally it is hard to transfer techniques developed for the covariogram problem within the family of convex bodies to the family of $\bM$-convex sets. One such attempt was made in our previous publication \cite{AveL12} for two-dimensional $\bM$-convex sets $K$. In \cite{AveL12} it was shown that if~$K$ samples $\conv(K)$ well enough, that is, if~$K$ is close enough to a compact convex set in a certain sense, then the reconstruction from~$g_K$ is similar to the reconstruction in the case of compact convex sets. But in general, there is a significant difference between the covariogram problem for the family of compact convex sets and its discrete analogue, the family of $\bM$-convex sets. This was already shown in \cite{Daurat-Gerard-Nivat-2005} and \cite{MR2160045}. Both these sources provide examples of pairs of \emph{nontrivially} homometric $\bM$-convex sets in dimension two, which shows that the positive result for planar convex bodies from~\cite{Averkov-Bianchi-2009} cannot be carried over to the discrete setting. Thus, the covariogram problem for $\bM$-convex sets appears to be intricate, as the properties of nontrivial pairs of homometric $\bM$-convex sets are not yet well understood. The aim of this manuscript is to provide tools and results which allow to gain more insight, at least under additional assumptions.

\paragraph{Nontrivially homometric pairs arising from the direct-sum operation.}
If, for $X,Y\subseteq \R^{d}$, each $z\in X+Y$ has a unique representation $z=x+y$ with $x\in X$ and $y\in Y$, we say that the sum~$X+Y$ is \emph{direct}; in this case we write $X\oplus Y$ for $X+Y$.
Direct sums provide an `easy template' to generate pairs of (nontrivially) homometric sets, as the following proposition shows. 

\begin{proposition}[Direct sums and homometry]
	\label{prp:dir:sum:hom}
	Let~$S$ and $T$ be finite subsets of $\R^d$ such that the sum of $S$ and $T$ is direct. Then the following statements hold:
		\begin{enumerate}[(a)]
			\item \label{S+(-T):too} The sum of $S$ and $-T$ is also direct.
			\item \label{S+T,S+(-T):hom:metr} $S \oplus T$ and $S \oplus (-T)$ are homometric. 
			\item \label{S+T,S+(-T):nontr:hom:metr} $S \oplus T$ and $S \oplus (-T)$ are nontrivially homometric if and only if both $S$ and $T$ are not centrally symmetric. 
		\end{enumerate}
\end{proposition}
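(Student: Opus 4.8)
The plan is to dispose of (a) and (b) quickly and to concentrate on (c). For (a), I would use that a sum $X+Y$ is direct exactly when $(X-X)\cap(Y-Y)=\{0\}$; since $(-T)-(-T)=T-T$, directness of $S+T$ immediately gives directness of $S+(-T)$. For (b), I would record that $g_K(u)=\card{\setcond{(a,b)\in K\times K}{a-b=u}}=:N_K(u)$ for every finite $K$, and that directness yields the factorization $N_{S\oplus T}=N_S*N_T$ (convolution), since each element of $S\oplus T$ has a unique representation as $s+t$. Because $N_X(u)=N_X(-u)$, we have $N_{-T}=N_T$, whence $g_{S\oplus(-T)}=N_S*N_{-T}=N_S*N_T=g_{S\oplus T}$, proving (b).

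For (c), part (b) guarantees that the two sums are always homometric, so it remains to show that they are \emph{trivially} homometric precisely when at least one of $S$, $T$ is centrally symmetric; here ``trivially homometric'' means $S\oplus(-T)=\eps(S\oplus T)+w$ for some $\eps\in\{\pm1\}$ and $w\in\R^d$. The easy direction is explicit: if $T=2c-T$ then $-T=T-2c$, so $S\oplus(-T)=(S\oplus T)-2c$ is a translate of $S\oplus T$; and if $S=2c-S$ then $-(S\oplus T)=(-S)\oplus(-T)=(S\oplus(-T))-2c$, so $S\oplus(-T)=2c-(S\oplus T)$ is a point reflection of $S\oplus T$. In either case the two sums are trivially homometric.

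For the converse I would first translate $S$ and $T$ so that each has centroid $0$; this affects neither central symmetry, nor directness, nor the trivial-homometry relation between the two sums, and it makes a centroid-$0$ set $X$ centrally symmetric exactly when $X=-X$. Since $S\oplus T$ and $S\oplus(-T)$ then both have centroid $0$, comparing centroids in $S\oplus(-T)=\eps(S\oplus T)+w$ forces $w=0$, i.e.\ $S\oplus(-T)=\eps(S\oplus T)$. The key device is the group algebra $\Z[\R^d]$ of the additive group $\R^d$, which is an integral domain because $\R^d$ is torsion-free abelian; encoding a finite set $X$ as $\hat X=\sum_{x\in X}e_x$, directness becomes multiplicativity $\widehat{X\oplus Y}=\hat X\,\hat Y$. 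Thus $\eps=+1$ gives $\hat S\,\widehat{-T}=\hat S\,\hat T$ and $\eps=-1$ gives $\hat S\,\widehat{-T}=\widehat{-S}\,\widehat{-T}$; cancelling the nonzero common factor in the domain yields $T=-T$ or $S=-S$ respectively, so $T$ or $S$ is centrally symmetric.

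I expect the cancellation to be the main obstacle: a finite set can admit several direct-sum decompositions, so one cannot simply ``remove a summand'' by a combinatorial argument. Passing to the integral-domain structure of the group algebra is exactly what legitimizes the cancellation and converts the two normalized identities into the required central symmetries. If one prefers to avoid the group algebra, the same cancellation can be extracted by induction, using a generic linear functional to strip off extreme points of the sums one at a time, but the algebraic route is cleaner.
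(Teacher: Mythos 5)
Your proof is correct, and for part (c) it takes a genuinely different route from the paper's. Parts (a) and (b) are essentially repackagings: the paper verifies (a) by a direct element chase (from $s-t=s'-t'$ it deduces $s+t'=s'+t$ in $S\oplus T$, forcing $s=s'$ and $t=t'$), while you invoke the equivalent criterion that $X+Y$ is direct iff $(X-X)\cap(Y-Y)=\{o\}$ together with $(-T)-(-T)=T-T$; and your convolution identity $N_{S\oplus T}=N_S*N_T$ with $N_{-T}=N_T$ is exactly the paper's explicit bijection $(s_1,t_1,s_2,t_2)\mapsto(s_1,t_2,s_2,t_1)$ in multiplicative form. The real divergence is in (c): the paper proceeds by induction on $\card{T}$ (resp.\ $\card{S}$), taking convex hulls in $S\oplus T=c+(S\oplus(-T))$, applying the cancellation law for Minkowski sums to obtain $\conv(T)=c-\conv(T)$, peeling off a pair of reflected vertices $t_1=c-t_2$, and inducting on $T\setminus\{t_1,t_2\}$; you instead normalize both centroids to the origin---which legitimately kills the translation vector $w$, since centroids of direct sums add, and reduces trivial homometry to the exact identities $S\oplus(-T)=\pm(S\oplus T)$---and then cancel in the group ring $\Z[\R^d]$, an integral domain because $\R^d$ is a torsion-free (hence orderable) abelian group. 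Your cancellation step is sound: writing $\widehat{S\oplus(-T)}=\hat S\,\widehat{-T}$ requires the directness of $S+(-T)$, which your part (a) supplies. Notably, the paper itself flags your route in the remark following its proof, pointing to the group-ring tools of Rosenblatt and Seymour, so you have independently reconstructed that alternative in full detail. As for what each approach buys: the paper's induction is elementary and self-contained modulo the Minkowski cancellation law for convex hulls, whereas your centroid normalization plus domain cancellation avoids induction entirely, treats the translation and point-reflection cases uniformly, and---as you correctly diagnose---is precisely what justifies removing a common summand despite the nonuniqueness of direct-sum decompositions of finite sets.
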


The assertions of this proposition are rather basic and seem to be well known. Since we are not aware of proofs in the literature, we provide them as a service to the reader in Section~\ref{sect-proofs-all-dim}.

Though Proposition~\ref{prp:dir:sum:hom} makes it very easy to give examples of nontrivially homometric pairs of general finite sets, constructing pairs of  nontrivially homometric \emph{lattice-convex} sets is much more challenging. In our previous publication \cite{AveL12} we found the following infinite family of pairs $K, L$ of nontrivially homometric $\bM$-convex sets in dimension two:

\begin{example} \label{AveL12:example}
  Let~$d=2$ and $k\in\N$. Let~$\bM:=\Z^2$ and 
	\begin{align*}
\bL&:=\Z b_1 + \Z b_2, \ \text{where} \ b_1  = (k+1,-1), \ b_2 = (k,1), \\ 
T&:= \{0,\ldots,k\} \times \{0\} \cup \{0,\ldots,k-1\} \times \{1\}. 
	\end{align*}
Then $\bM=\bL\oplus T$; see Figures~\ref{fig:main:A}\,(a), (b), and (c). Choose $S$ to be any finite $\bL$-convex set such that $\conv(S)$ is a polygon and each edge of $\conv(S)$ is parallel to $b_1$, $b_2$, or $b_2-b_1=(-1,2)$. Then $K=S \oplus T$ and $L=S \oplus (-T)$ is a pair of homometric $\bM$-convex sets. If $S$ is not centrally symmetric, the pair is nontrivially homometric; see Figures~\ref{fig:main:A}\,(d), (e), and (f). 
\end{example}

\begin{figure}
  \begin{tabular}{p{4cm}p{5cm}p{5cm}}
    (a) $T$ \par\bigskip\qquad\includegraphics{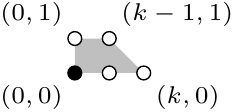} &%
    (b) Generators of $\bL$\par\bigskip\includegraphics{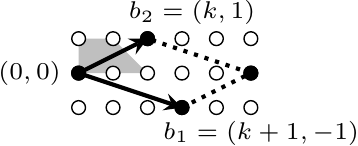} &
    (c) $\bM=\bL\oplus T$\par\medskip\includegraphics{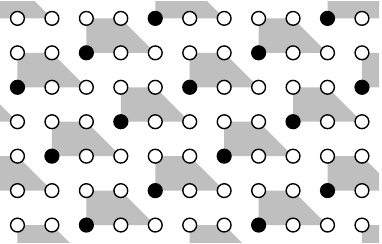}
    \\[1ex]%
    (d) $K=S\oplus T$\par\qquad\includegraphics{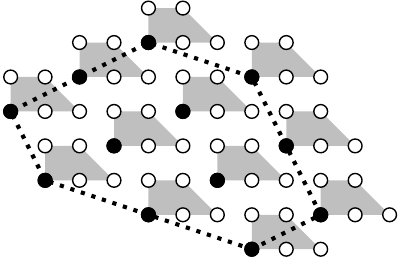}  &%
    (e) $L=S\oplus (-T)$\par\bigskip\qquad\includegraphics{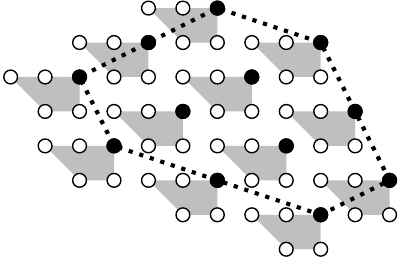}  &%
    (f)\par\bigskip\qquad\includegraphics{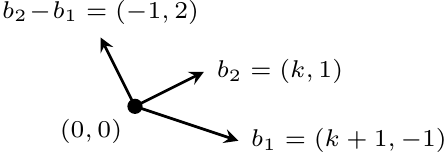}
  \end{tabular}
  \caption{Illustration to Example~\ref{AveL12:example} in the case $k=2$. The black dots are points of $S$ and $\bL$. The dotted lines in (d) and (e) represent the boundary of the convex hull of $S$. The gray regions are the convex hulls of translations of $T$ and $-T$. In (d) and (e), the unions of the white and black dots are the sets $K$ and $L$.}
\label{fig:main:A}
\end{figure}

Up to a change of coordinates in $\Z^{2}$ and up to translations of $K$ and $L$, the nontrivially homometric pairs $K, L$ from \cite{MR2160045} and \cite{Daurat-Gerard-Nivat-2005} are members of the family presented in Example~\ref{AveL12:example}. That is,  Example~\ref{AveL12:example} contains essentially all nontrivially homometric pairs of lattice-convex sets that have been known so far for $d=2$. To the best of our knowledge, for dimension $d\geq 3$, nontrivially homometric pairs of lattice-convex sets have not yet been studied. The primary aim of this manuscript is an extensive study of such pairs for an arbitrary dimension $d \ge 2$. Currently, it seems hard to carry out the study in full generality.  Therefore, we impose more structure that is similar to the structure in Example~\ref{AveL12:example}, as it is detailed below.

By $\cT^d$ we denote the set of all triples $(\bM,\bL,T)$ such that $\bM = \bL \oplus T$ holds, $\bM$ and $\bL$ are lattices of rank $d$ with $\bL \subseteq \bM$, and $T$ is a nonempty finite $\bM$-convex set. We call such a triple $(\bM,\bL,T)$ a \emph{tiling} of $\bM$ by translations of $T$ with vectors of $\bL$. The set $T$ in $(\bM,\bL,T) \in \cT^d$ is called a \emph{tile}. One can use the following procedure to generate some of the elements of $\cT^d$: Fix $\bM := \Z^d$, choose~$d$ linearly independent vectors $b_1,\ldots, b_d \in \Z^d$ and a vector $v \in \R^d$. Define $\bL$ to be the lattice generated by $b_1,\ldots,b_d$ and $T := \bM \cap \bigl( v + (0,1] b_1 + \cdots + (0,1]  b_d\bigr)$. Then~$(\bM,\bL,T) \in \cT^d$; see also Figure~\ref{fig:exa-Dirichlet}.  We emphasize that we restrict ourselves to tilings with a nonempty finite and $\bM$-convex tile $T$. The presence of two lattices $\bL$ and $\bM$ in our considerations explains why we do not want to fix $\bM=\Z^d$ throughout.\label{page:no-fixed-lattice-M} Indeed, equally well one could also fix $\bL=\Z^d$ and, for some of our arguments, such a choice will turn out to be more convenient.

For each $d \ge 2$ we want to describe nontrivially homometric pairs of $\bM$-convex sets which are generated from tilings $(\bM,\bL,T) \in \cT^d$ and have the form $S \oplus T$, $S \oplus (-T)$ with $S \subseteq \bL$.

\begin{figure}
  \begin{tabular}{p{4cm}p{4cm}}
  (a)\par\quad\includegraphics{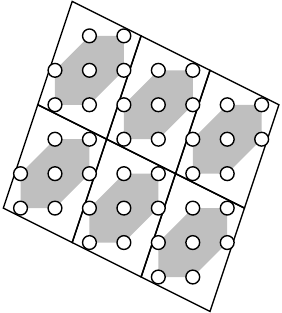}&
  (b)\par\quad\includegraphics{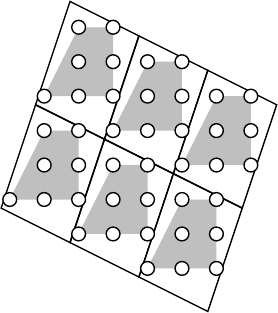}
  \end{tabular}
\caption{Examples of tilings $(\Z^{d},\bL,T) \in \cT^d$ generated by different translations of $(0,1] b_1 + \cdots + (0,1] b_d$. Here we have $d=2$ and $b_1=(2,-1)$ and $b_2=(1,3)$.}
   \label{fig:exa-Dirichlet}
\end{figure}

\paragraph{New contributions.} To formulate our main results, we need the notion of the width of a set. So, we introduce the \emph{support function} $h(K,\cdot)$ and the \emph{width function} $w(K,\cdot)$ of $K\subseteq\R^d$ as functions on $\R^{d}$ defined by 
\begin{align*}
        h(K,u)&:=\sup_{x\in K}\sprod{u}{x},&
  w(K,u) & :=  h(K,u) + h(K,-u) = \sup_{x \in K} \sprod{u}{x} - \inf_{x \in K} \sprod{u}{x}.
\end{align*}
Let~$\bL$ be the lattice of rank $d$ and consider its \emph{dual lattice} $\bL^\ast := \setcond{y \in \R^d}{ \text{$\sprod{y}{x} \in \Z$ for each $x \in \bL$}}$. The \emph{lattice width} of a set $K\subseteq\R^d$ with respect to  $\bL$ is defined by 
\[
  w(K,\bL):=\inf\setcond{w(K,u)}{u\in\bL^{*}\setminus\{o\}}.
\]
We also define the set 
\begin{align}\label{eq:def:WKL}
	W(K,\bL) &= \setcond{u \in \bL^\ast \setminus \{o\}}{w(K,u)<1},
\end{align}
that is, the set of all directions in which $K$ is `thin' relative to $\bL$. We observe that $W(K,\bL)$ contains, up to rescaling, all vectors $u \in \R^d \setminus \{o\}$ with the property that the family of hyperplanes orthogonal to $u$ and passing through points of $\conv(K) + \bL$ does \emph{not} cover the whole space $\R^d$; see Figure~\ref{fig:width:AveL12:strips} for an example.\footnote{This geometric interpretation is a straightforward consequence of the definition of $W(K,\bL)$; we only use it for the purpose of visualization.}

\begin{figure}
  \begin{tabular}{p{3.5cm}p{3.5cm}p{3.5cm}p{3.5cm}}
    (a)\par\smallskip\qquad\includegraphics{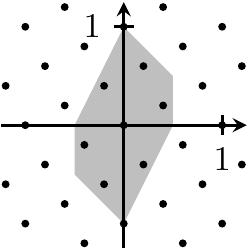}&
    (b)\par\smallskip\includegraphics{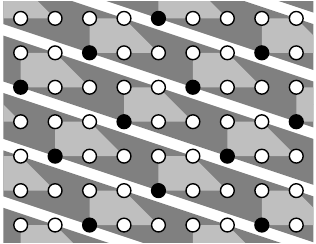}&
    (c)\par\smallskip\includegraphics{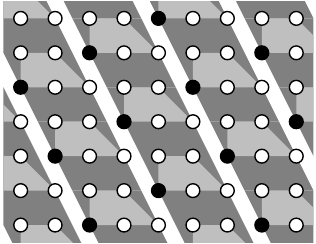}&
    (d)\par\smallskip\includegraphics{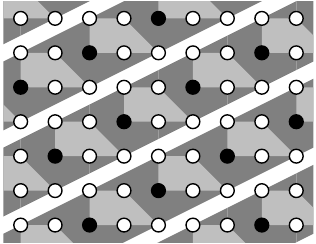}
   \end{tabular}
   \caption{The set $W(T,\bL)$ and its geometric meaning for the tiling $(\bM,\bL,T) \in \cT^2$ in Example~\ref{AveL12:example} with $k=2$. In (a) the gray area is $\setcond{u \in \R^d}{w(T,u) \le 1}$ and the dots are the elements of $\bL^\ast$; one can see that $\card{W(T,\bL)}=6$. Each pair $u,-u$ of vectors in $W(T,\bL)$ defines a family of strips orthogonal to $u$ which cover $\conv(T) + \bL$ but do not cover $\R^2$; see (b), (c), and (d).}
   \label{fig:width:AveL12:strips}
\end{figure}

For $(\bM,\bL,T) \in \cT^d$, we establish the following results: 

\begin{enumerate}[I.]
\item \emph{A necessary and a sufficient condition for $\bM$-convexity of $S\oplus T$.}  We give a sufficient condition for the $\bM$-convexity of $S \oplus T$, when $S \subseteq \bL$ is $d$-dimensional. This condition is formulated in terms of~$W(T,\bL)$. We also derive a similar necessary condition. Both conditions are presented in Theorem~\ref{thm:ABC}, which is the main tool for proving all subsequent results.

\item \emph{New examples of nontrivially homometric lattice-convex sets.} \label{new:examples:item} For each $d \ge 2$ we give tilings $(\bM,\bL,T) \in \cT^d$ such that $W(T,\bL)$ contains $d+1$ distinct vectors that linearly span~$\R^d$. These tilings generate many examples of nontrivially homometric pairs of lattice-convex sets of the form $S \oplus T$ and $S \oplus (-T)$; see Corollary~\ref{nontrivial-S-cor} and Example~\ref{AveL12:example:generalized}. We emphasize that so far, for every $d \ge 3$ no pairs of nontrivially homometric lattice-convex sets (apart from those which are lifted from dimension two by taking Cartesian products) have been known.

\item \emph{Finiteness result for $W(T,\bL)$.} Result \ref{new:examples:item} motivates the study of the case that $W(T,\bL)$ contains a basis of $\R^d$. Assume that, for fixed $\bL$,  we vary $T$ and $\bM$, preserving the condition $(\bM,\bL,T) \in \cT^d$. We show that, up to a change of the basis of $\bL^\ast$, there are only finitely many sets $W(T,\bL)$ such that $T$ is $d$-dimensional and $W(T,\bL)$ contains a basis of $\R^d$.

\item \emph{Explicit description in the plane.} We describe
(up to a change of coordinates and up to translations) all nontrivially homometric pairs of $\bM$-convex sets of the form $S \oplus T$, $S \oplus (-T)$ in the plane. Theorem~\ref{thm:complete-classification} shows that Example~\ref{AveL12:example} already contains \emph{all} such pairs.
\end{enumerate}

\paragraph{Structure of the paper.} In Section~\ref{sec:results} we state our main results. The proofs for arbitrary dimension $d$ are given in Section~\ref{sect-proofs-all-dim}. Section~\ref{sec:proof:classification} will give the proof of Theorem~\ref{thm:complete-classification}, which also includes a computer enumeration that we perform with the computer algebra system Magma \cite{MR1484478}. Magma directly supports various computations for rational polytopes and lattice points that we need for our enumeration (in particular, the computation of the lattice width).\footnote{Note that one could also use Polymake \cite{MR2721537} or, with somewhat higher amount of work, a standard general-purpose programming language.}
Section~\ref{sec:examples} presents constructions of tilings which can be used together with the main theorems in order to generate nontrivially homometric pairs of lattice-convex sets. As a complement to this,  Section~\ref{sec:examples} also contains examples that illustrate the impact of the different assumptions that we make in our results. Finally, in Section~\ref{sec:magma-code} we present the Magma code that we used to accomplish different computational tasks, in particular the computer assisted proof of the explicit description for dimension two.  We close this section by collecting some further notions and standard terminology.

In $\R^d$, $o$ denotes the origin and $e_1,\ldots,e_d$ the standard basis. The set $D(X):=X-X$ is called the \emph{difference set} of $X\subseteq\R^d$. The greatest common divisor of the entries of $v\in\Z^d$ is denoted by $\gcd(v)$. The volume, i.\,e., the Lebesgue measure on $\R^d$, is denoted by $\vol$.  We use $\intr(K)$, $\conv(K)$, $\lin(K)$ and $\aff(K)$ to denote the interior, the convex hull, the linear hull, and the affine hull of $K\subseteq \R^d$, respectively. For $x, y \in \R^d$ we let $[x,y]:=\conv(\{x,y\})$.
We use common terminology for convex sets, polytopes and polyhedra such as face, facet, vertex, (outer) normal vector to a facet, rational polyhedron and integral  polyhedron. Using the support function, we define $F(K,u):=\setcond{x\in K}{\sprod{u}{x}=h(K,u)}$, where $u \in \R^d$. For a polytope $P$ in $\R^d$, the set $F(P,u)$ is a face of~$P$. The \emph{polar} of $K$ is the closed convex set $K^\circ := \setcond{x\in\R^d}{\text{$\sprod{x}{y}\leq 1$ for each $y\in K$}}$. 

Let~$\bL$ be a lattice. A nonzero vector~$x$ of $\bL$ is called a \emph{primitive vector} of $\bL$ if $o$ and $x$ are the only points of $\bL$ on the line segment $[o,x]$. Assume that $\bL\subseteq\R^d$ has rank $d$ and that $b_1,\ldots,b_d\in\R^d$ is a \emph{basis} of $\bL$. Then  $\sum_{j=1}^i(0,1]b_{j}$ is called the \emph{Dirichlet cell} with respect to the basis $b_1,\ldots,b_d$ and the \emph{determinant} of $\bL$ is $\det(\bL) := \vol \bigl( (0,1] b_1 + \cdots + (0,1] b_d\bigr)$. The number $\det(\bL)$ is independent of the choice of the basis. Further, the dual lattice $\bL^\ast$ is itself a lattice of rank $d$. Clearly, $(\Z^{d})^{*}=\Z^{d}$. The \emph{dual basis} of a basis $b_1,\ldots,b_d$ of $\bL$ is the uniquely determined basis $b_1^\ast,\ldots,b_d^\ast$ satisfying $\sprod{b_i}{b_i^\ast} = \delta_{i,j}$ for all $i, j \in \{1,\ldots,d\}$, where $\delta_{i,j}$ is the Kronecker delta. If $b_1,\ldots,b_d$ is a basis of the lattice $\bL$, then $b_1^\ast,\ldots,b_d^\ast$ is a basis of $\bL^\ast$. The latter also implies $\det(\bL^\ast)=1/\det(\bL)$.

A matrix $U \in \Z^{d \times d}$ is called \emph{unimodular} if its determinant is $1$ or $-1$. A mapping on $\R^d$ is called a \emph{unimodular transformation} if it can be written as $x\mapsto Ux$ for some unimodular matrix $U$; it is called a \emph{affine unimodular transformation} if it can be written as $x\mapsto Ux+z$ for some unimodular matrix $U$ and some $z\in\Z^d$. It is well known that (affine) unimodular transformations are exactly those (affine) linear mappings on $\R^d$ that are $\Z^d$-preserving, that is, that map $\Z^d$ bijectively to $\Z^d$.

\section{Main results}\label{sec:results}

\begin{theorem}\label{thm:ABC} \thmheader{A sufficient and a necessary condition for the $\bM$-convexity of $S \oplus T$}
  Let~$(\bM, \bL,T) \in \cT^d$ and let $S$ be a finite $d$-dimensional subset of $\bL$. Then the implications \myeqref{item:ABC:A}\,$\Rightarrow$\,\myeqref{item:ABC:B}\,$\Rightarrow$\,\myeqref{item:ABC:C} hold, where \myeqref{item:ABC:A}, \myeqref{item:ABC:B} and \myeqref{item:ABC:C} are the following conditions:
  \begin{enumerate}[(a)]
    \item\label{item:ABC:A} $S$ is $\bL$-convex and each facet of the polytope $\conv(S)$ has a normal vector in $W(T,\bL)$.
    \item\label{item:ABC:B} $S \oplus T$ is $\bM$-convex.
    \item\label{item:ABC:C} $S$ is $\bL$-convex and each facet $F$ of $\conv(S)$ with $\aff(F)\subseteq F + \bL$ has a normal vector in~$W(T,\bL)$.
  \end{enumerate}
\end{theorem}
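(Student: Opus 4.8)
The plan is to reduce the whole theorem to analysing when the single identity $(\conv(S)+\conv(T))\cap\bM=S\oplus T$ holds. Indeed, $\conv(S\oplus T)=\conv(S)+\conv(T)=:P+Q$, and whenever $S\oplus T=C\cap\bM$ for a convex set $C$ one has $P+Q=\conv(S\oplus T)\subseteq C$; thus $\bM$-convexity of $S\oplus T$ is equivalent to $(P+Q)\cap\bM=S\oplus T$, and since $S\subseteq\bL\subseteq\bM$ and $T\subseteq\bM$ the inclusion $\supseteq$ is automatic, so only $(P+Q)\cap\bM\subseteq S+T$ is ever at stake. I will use repeatedly that, for $u\in\R^d$, $h(Q,u)=\max_{t\in T}\sprod u t$ and $\inf_{x\in Q}\sprod u x=\min_{t\in T}\sprod u t$, so that $w(Q,u)=w(T,u)$, and that $P$ is a $d$-dimensional polytope with $h(P,u)=\max_{s\in S}\sprod u s\in\Z$ whenever $u\in\bL^\ast$.

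For \myeqref{item:ABC:A}\,$\Rightarrow$\,\myeqref{item:ABC:B}: given $m\in(P+Q)\cap\bM$, write $m=\ell+t$ with $\ell\in\bL$ and $t\in T$, which is possible and unique because $\bM=\bL\oplus T$. It suffices to show $\ell\in S$, and since $S$ is $\bL$-convex and $\ell\in\bL$ this follows once we know $\ell\in P$. If $\ell\notin P$, then some facet inequality of $P$ is violated, i.e. $\sprod u\ell>h(P,u)$ for an outer facet normal $u$, which by \myeqref{item:ABC:A} may be chosen in $W(T,\bL)\subseteq\bL^\ast\setminus\{o\}$; integrality then upgrades this to $\sprod u\ell\ge h(P,u)+1$. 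Now $h(P,u)+h(Q,u)=h(P+Q,u)\ge\sprod u m=\sprod u\ell+\sprod u t\ge h(P,u)+1+\min_{x\in Q}\sprod u x=h(P,u)+1-h(Q,-u)$, whence $w(T,u)=w(Q,u)=h(Q,u)+h(Q,-u)\ge1$, contradicting $u\in W(T,\bL)$. Hence $\ell\in P$, and the implication is proved.

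For \myeqref{item:ABC:B}\,$\Rightarrow$\,\myeqref{item:ABC:C} I first note that $S$ is $\bL$-convex: for $x\in\conv(S)\cap\bL$, fix any $t_0\in T$; then $x+t_0\in(P+Q)\cap\bM=S\oplus T$, and the uniqueness of the $\bL\oplus T$-decomposition forces $x\in S$. For the facet condition I argue by contraposition. Let $F=F(P,u)$ be a facet of $P$ with primitive outer normal $u\in\bL^\ast$ (such a $u$ exists because $\aff(F)$ is a rational hyperplane with respect to $\bL$) satisfying $\aff(F)\subseteq F+\bL$, and suppose $w(T,u)\ge1$; I will produce a point of $(P+Q)\cap\bM$ lying outside $S\oplus T$. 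Put $c:=h(P,u)\in\Z$ and $b:=\min_{t\in T}\sprod u t$, pick $t_\ast\in T$ with $\sprod u{t_\ast}=b$, and pick $q_\ast\in Q$ with $\sprod u{q_\ast}=b+1$ (this is possible because $b\le b+1\le b+w(T,u)=\max_{t\in T}\sprod u t$ and $Q$ is convex); set $r:=q_\ast-t_\ast$, so $\sprod u r=1$. Writing $\bL_0:=\bL\cap u^{\perp}$, the hypothesis $\aff(F)\subseteq F+\bL$ is equivalent to $F+\bL_0=\aff(F)$, because translates of $F$ by lattice vectors outside $u^{\perp}$ leave the hyperplane $\aff(F)$. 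Since $u$ is primitive in $\bL^\ast$, the hyperplane $\aff(F)+r=\setcond{x}{\sprod u x=c+1}$ contains a point of $\bL$, and as $\aff(F)+r=(F+r)+\bL_0$ we may translate it by a suitable element of $\bL_0$ to get a point $y\in(F+r)\cap\bL$. Writing $y=f+r$ with $f\in F\subseteq P$, the point $m:=y+t_\ast=f+q_\ast$ lies in $(P+Q)\cap\bM$; but $m=y+t_\ast$ is its $\bL\oplus T$-decomposition, and $\sprod u y=c+1>c=h(P,u)$ shows $y\notin P\supseteq S$, so $m\notin S\oplus T$ --- contradicting \myeqref{item:ABC:B}.

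I expect the construction of the lattice point $y\in(F+r)\cap\bL$ to be the main obstacle: it is the only place where the hypothesis $\aff(F)\subseteq F+\bL$ is used (through its reformulation that $F$ covers a fundamental domain of $\bL\cap u^{\perp}$ inside $\aff(F)$), and one must choose the auxiliary vector $r$ with $\sprod u r=1$ --- equivalently a level $b+1$ lying in the $u$-range of $T$, which is precisely where the assumption $w(T,u)\ge1$ enters --- so that $y$ sits one lattice level strictly above the facet $F$ while $y+t_\ast$ still factors through $P+Q$. The remaining ingredients (the equivalence of $\bM$-convexity of $S\oplus T$ with the identity $(P+Q)\cap\bM=S+T$, the integrality arguments, and the reformulation of $\aff(F)\subseteq F+\bL$) are routine.
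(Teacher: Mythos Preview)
Your proof is correct and follows essentially the same approach as the paper's. For \myeqref{item:ABC:A}\,$\Rightarrow$\,\myeqref{item:ABC:B} both arguments decompose a point of $(P+Q)\cap\bM$ via $\bM=\bL\oplus T$ and use integrality of $\sprod{u}{\cdot}$ on $\bL$ together with $w(T,u)<1$ to force the $\bL$-component into $\conv(S)$. For \myeqref{item:ABC:B}\,$\Rightarrow$\,\myeqref{item:ABC:C} both arguments construct a lattice point one level above the facet $F$ lying in $\conv(S)+\conv(T)$ but not in $S\oplus T$; the paper normalises by translating $S$ and $T$ and builds a prism $\conv(F\cup(F+x))$ sliced at height $\lfloor h(T,u)\rfloor$, whereas you avoid the translations and pick $t_\ast$ at the minimal $u$-level and $q_\ast\in\conv(T)$ at level $b+1$ directly --- a cosmetically cleaner but substantively identical construction.
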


We illustrate implication \myeqref{item:ABC:A}\,$\Rightarrow$\,\myeqref{item:ABC:B} by Example~\ref{AveL12:example}; see also Figures~\ref{fig:main:A} and \ref{fig:width:AveL12:strips} for $k=2$: The validity of~\myeqref{item:ABC:A} means that the edges of the polygon $\conv(S)$ are parallel to the strips introduced in Figure~\ref{fig:width:AveL12:strips}. Thus, Theorem~\ref{thm:ABC} confirms that the sets $S \oplus T$ in Example~\ref{AveL12:example} are $\bM$-convex. 

The condition $(\bM,\bL,T) \in \cT^d$ and the definition of $W(T,\bL)$ are invariant with respect to replacing~$T$ by~$-T$. So, if \myeqref{item:ABC:A} holds for $T$, it also holds for $-T$ and we conclude that both sets in the homometric pair $S \oplus T$, and $S \oplus (-T)$ are $\bM$-convex. Furthermore, \myeqref{item:ABC:A} is invariant with respect to replacing $S$ with $S_k:=\conv(k S) \cap \bL$, where $k \in \N$. Thus, whenever we can use \myeqref{item:ABC:A}\,$\Rightarrow$\,\myeqref{item:ABC:B} to find an $\bM$-convex set of the form $S \oplus T$, we get infinitely many such sets $S_k \oplus T$ with $k \in \N$. Also note that  $W(T,\bL)$ can be computed algorithmically when, say, $\bL=\Z^d$ and  $T$ is a finite, $d$-dimensional subset of $\rational^d$ (see also Section~\ref{sec:magma-code}). This paves the way to a computer-assisted search for interesting pairs.

In view of Proposition~\ref{prp:dir:sum:hom}, for given $(\bM, \bL, T) \in \cT^d$, the implication \myeqref{item:ABC:A}\,$\Rightarrow$\,\myeqref{item:ABC:B} can be used to search for sets $S$ with $S \oplus T$, $S \oplus (-T)$  being \emph{nontrivially} homometric: 

\begin{corollary} \label{nontrivial-S-cor} \thmheader{A sufficient condition for the existence of $S$ such that $S \oplus T$ and $S \oplus (-T)$ are nontrivially homometric and $\bM$-convex}
Let~$(\bM, \bL, T) \in \cT^d$ with noncentrally symmetric $T$. Let~$W(T,\bL)$ contain linearly independent vectors $u_1,\ldots,u_d$ and a vector $u_{d+1}$ that is not parallel to any of the vectors $u_1,\ldots,u_d$. Then there exists $S\subseteq\bL$ being noncentrally symmetric, finite, $d$-dimensional, and $\bL$-convex such that each facet of $\conv(S)$ has an outer normal vector in $\{\pm u_1,\ldots,\pm u_{d+1}\}$. For each such $S$, the sets $S \oplus T$ and $S \oplus (-T)$ form a pair of nontrivially homometric $\bM$-convex sets. 
\end{corollary}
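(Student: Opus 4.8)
The plan is to combine Theorem~\ref{thm:ABC}, implication \myeqref{item:ABC:A}\,$\Rightarrow$\,\myeqref{item:ABC:B}, with Proposition~\ref{prp:dir:sum:hom}. The main work is to exhibit a single set $S$ with the required properties; once such an $S$ is in hand, everything else follows immediately. First I would construct a $d$-dimensional polytope $P$ in $\R^d$ all of whose facet normals lie in $\{\pm u_1,\dots,\pm u_{d+1}\}$ and which is \emph{not} centrally symmetric. To do this, take the $2d$ halfspaces $\{x : \sprod{u_i}{x}\le 1\}$ and $\{x : \sprod{-u_i}{x}\le 1\}$ for $i=1,\dots,d$, whose intersection is a centrally symmetric $d$-dimensional parallelotope $Q$ (bounded since $u_1,\dots,u_d$ are linearly independent), and then intersect $Q$ with the one further halfspace $\{x:\sprod{u_{d+1}}{x}\le c\}$ for a suitable constant $c$. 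For $c$ chosen so that this last halfspace cuts off a nonempty piece of $Q$ but does not remove all the facets of $Q$, the resulting polytope $P$ is $d$-dimensional, has all facet normals among $\pm u_1,\dots,\pm u_d,u_{d+1}$, and is not centrally symmetric (because $u_{d+1}$ is not parallel to any $u_i$, the facet with normal $u_{d+1}$ has no ``mirror'' facet with normal $-u_{d+1}$, which breaks central symmetry — one should verify this by comparing the support functions of $P$ and $-P$ in direction $u_{d+1}$ versus $-u_{d+1}$).

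Next I would pass from $P$ to a lattice-convex set. Since $W(T,\bL)\subseteq\bL^\ast$, the normals $u_1,\dots,u_{d+1}$ are rational with respect to $\bL$, so after scaling $P$ by a large integer factor $k$ (this is harmless, as scaling preserves facet directions and non-central-symmetry) we may assume $kP$ is a rational polytope with a lattice point in its interior. Define $S:=\conv(kP)\cap\bL = kP\cap\bL$. By construction $S$ is finite, $d$-dimensional for $k$ large enough, and $\bL$-convex. The one point needing care is that $\conv(S)$ equals $kP$, or at least has the same facet normals as $kP$: this holds once $kP$ contains enough lattice points, e.g.\ once each facet of $kP$ contains an affinely spanning set of points of $\bL$; this can be arranged by taking $k$ large (standard fact about lattice points in dilated rational polytopes). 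Finally, replacing $S$ by $\conv(NS)\cap\bL$ for a further large $N$ if needed, one ensures $S$ is not centrally symmetric — indeed if $\conv(S)$ inherits the facet normals of $kP$, then $\conv(S)$ is not centrally symmetric, and a set $S$ whose convex hull is not centrally symmetric cannot itself be centrally symmetric.

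With $S$ thus constructed, the conclusion is quick. Each facet of $\conv(S)$ has a normal in $\{\pm u_1,\dots,\pm u_{d+1}\}\subseteq W(T,\bL)$ (using that $W(T,\bL)$ is symmetric, i.e.\ $u\in W(T,\bL)\Leftrightarrow -u\in W(T,\bL)$, which is immediate from $w(T,u)=w(T,-u)$), so condition \myeqref{item:ABC:A} of Theorem~\ref{thm:ABC} holds for the triple $(\bM,\bL,T)$ and the set $S$. By \myeqref{item:ABC:A}\,$\Rightarrow$\,\myeqref{item:ABC:B}, the set $S\oplus T$ is $\bM$-convex. Since \myeqref{item:ABC:A} is invariant under replacing $T$ by $-T$ (as noted in the text following Theorem~\ref{thm:ABC}, because $(\bM,\bL,-T)\in\cT^d$ and $W(-T,\bL)=W(T,\bL)$), the set $S\oplus(-T)$ is $\bM$-convex as well. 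Now $S\subseteq\bL$ and $T$ lies in a transversal of $\bL$ in $\bM$, so the sum $S+T$ is direct; Proposition~\ref{prp:dir:sum:hom}\myeqref{S+(-T):too}, \myeqref{S+T,S+(-T):hom:metr} gives that $S\oplus T$ and $S\oplus(-T)$ are homometric, and part~\myeqref{S+T,S+(-T):nontr:hom:metr} gives that they are nontrivially homometric because neither $S$ nor $T$ is centrally symmetric. This proves the corollary.

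The main obstacle I anticipate is the bookkeeping around the lattice-convexity of $S$: namely verifying that $\conv(S)$ really has exactly the facet directions of the continuous polytope $P$ (rather than extra ``lattice-induced'' facets), and simultaneously that $S$ is $d$-dimensional and not centrally symmetric. All three can be secured by dilating $P$ by a sufficiently large integer and invoking standard results on lattice points in rational polytopes, but phrasing this cleanly — ideally by first choosing $P$ to be a rational polytope outright and then citing that $k\conv(S_1)\cap\bL$ has convex hull $kP$ once $k$ kills all denominators and is large enough — is where the care is needed.
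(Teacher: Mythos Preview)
Your approach is essentially the same as the paper's: build a parallelotope from the $d$ independent normals, truncate it by one halfspace with normal $u_{d+1}$, observe the result is not centrally symmetric, then discretize. The paper executes the discretization step more cleanly than you do, and this is exactly the ``bookkeeping'' obstacle you flag at the end. After a linear change of coordinates the paper assumes $u_i=e_i$ for $i\le d$, so $\bL=\bL^\ast=\Z^d$ and the parallelotope is the cube $C=[-1,1]^d$; it then chooses the truncation level $\eps\in\Q$ so that $P=\{x\in C:\sprod{u_{d+1}}{x}\le h(C,u_{d+1})-\eps\}$ is a \emph{rational} polytope, and picks $k\in\N$ so that $kP$ is an \emph{integral} polytope. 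With $S:=(kP)\cap\Z^d$ one then has $\conv(S)=kP$ on the nose (all vertices of $kP$ are already in $\Z^d$), so the facet normals of $\conv(S)$ are exactly those of $P$ and no appeal to ``enough lattice points on each facet for large dilations'' is needed. The non-central-symmetry of $S$ also follows immediately from that of $kP$. Everything after that---invoking Theorem~\ref{thm:ABC} via $W(T,\bL)=W(-T,\bL)$ and Proposition~\ref{prp:dir:sum:hom}\myeqref{S+T,S+(-T):nontr:hom:metr}---matches your argument.
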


For this corollary, too, the existence of one set $S$ implies the existence of infinitely many sets $S_k := \conv(k S) \cap \bL$ with $k \in \N$ that satisfy the same assertion.  In Section~\ref{sec:examples} we construct, for each~$d \ge 2$, tilings $(\bM,\bL,T) \in \cT^d$ which satisfy the assumptions of Corollary~\ref{nontrivial-S-cor}.

Corollary~\ref{nontrivial-S-cor} motivates the study of sets $T$ and $\bL$ with $W(T,\bL)$ containing $d$ linearly independent vectors. The following theorem asserts that whenever $T$ is finite and $d$-dimensional, there are essentially finitely many such sets $W(T,\bL)$. This limits the search space for nontrivially homometric pairs considerably and implies that, loosely speaking, such pairs are `rare'.

\begin{theorem} \thmheader{Finitely many shapes of $W(T,\bL)$} \label{finiteness-thm}
  Let~$(\bM,\bL,T) \in \cT^d$, let $T$ be $d$-dimensional, and let $W(T,\bL)$ contain $d$ linearly independent vectors. Then the following statements hold:
  \begin{enumerate}[(a)]
  \item\label{item:finiteness-thm:III} There exists a basis $b_1^\ast,\ldots,b_d^\ast$ of the lattice $\bL^\ast$ and $k\in\N$ with $k\leq  2 (3/2)^{d-2} (d!)^2$ and $W(T,\bL) \subseteq \sum_{i=1}^{d}\{-k,\ldots,k\} b_i^\ast$.
    \item\label{item:finiteness-thm:I} $\card{W(T,\bL)} < 4^d$.
    \item\label{item:finiteness-thm:II} $\vol(\conv(W(T,\bL))) < \vol(D(T)^\circ)\leq 4^d \det(\bL^\ast) = 4^d/\det(\bL)$.
  \end{enumerate}
\end{theorem}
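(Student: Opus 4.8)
Since $D(T)=T-T$ is centrally symmetric and $T$ is $d$-dimensional, $\conv(D(T))$ is a centrally symmetric $d$-polytope with the origin in its interior; hence $D(T)^\circ=\conv(D(T))^\circ$ is again a centrally symmetric $d$-polytope, and its interior equals $\setcond{u\in\R^d}{h(D(T),u)<1}$. As $h(D(T),u)=h(T,u)+h(-T,u)=w(T,u)$, we get the identification
\[
  W(T,\bL)=\bigl(\bL^\ast\cap\intr(D(T)^\circ)\bigr)\setminus\{o\}.
\]
The heart of the proof is the inequality $w(T,u)\ge\tfrac12$ for every $u\in\bL^\ast\setminus\{o\}$. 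By positive homogeneity of $h(D(T),\cdot)$ it suffices to treat a primitive $u\in\bL^\ast$, so that $\sprod{u}{\bL}=\Z$. Since $\bL$ has finite index in $\bM$, the group $\sprod{u}{\bM}$ is a discrete subgroup of $\R$ containing $\Z$, hence $\sprod{u}{\bM}=\tfrac1q\Z$ for some $q\in\N$. The homomorphism $m\mapsto\sprod{u}{m}+\Z$ maps $\bM$ onto $\tfrac1q\Z/\Z$ and is trivial on $\bL$; as $T$ is a transversal of $\bM/\bL$, it follows that $\setcond{\sprod{u}{t}}{t\in T}\subseteq\tfrac1q\Z$ contains a representative of each of the $q$ residue classes modulo $\Z$. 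These are at least $q$ distinct elements of $\tfrac1q\Z$, so they span an interval of length $\ge\tfrac{q-1}{q}$; if $q\ge2$ this gives $w(T,u)\ge\tfrac{q-1}{q}\ge\tfrac12$. If $q=1$, then $u\in\bM^\ast$, so $\sprod{u}{T}\subseteq\Z$; since the $d$-dimensional set $T$ is not contained in any affine hyperplane $\setcond{x}{\sprod{u}{x}=c}$, the functional $\sprod{u}{\cdot}$ is nonconstant on $T$, whence $w(T,u)\ge1$. In geometric terms this says $\intr(D(T)^\circ)\cap2\bL^\ast=\{o\}$, because $2u\in\intr(D(T)^\circ)$ would mean $w(T,u)<\tfrac12$.

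\textbf{Parts \myeqref{item:finiteness-thm:II} and \myeqref{item:finiteness-thm:I}.} For the volume bound, apply Minkowski's convex body theorem to the symmetric convex body $D(T)^\circ$ and the lattice $2\bL^\ast$: since no nonzero point of $2\bL^\ast$ lies in $\intr(D(T)^\circ)$, we get $\vol(D(T)^\circ)\le 2^d\det(2\bL^\ast)=4^d\det(\bL^\ast)=4^d/\det(\bL)$. The left-hand inequality $\vol(\conv(W(T,\bL)))<\vol(D(T)^\circ)$ holds because $\conv(W(T,\bL))$ is a compact convex set contained in the open set $\intr(D(T)^\circ)$, hence a proper subset of $D(T)^\circ$ (it misses every vertex of $D(T)^\circ$), and $D(T)^\circ$ has positive volume. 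For \myeqref{item:finiteness-thm:I}, consider the map $W(T,\bL)\to\bL^\ast/4\bL^\ast$, $u\mapsto u+4\bL^\ast$. If $u\equiv u'\pmod{4\bL^\ast}$, then $\tfrac12(u-u')=\tfrac12u+\tfrac12(-u')$ is a convex combination of the two points $u,-u'$ of the open symmetric set $\intr(D(T)^\circ)$, so it lies in $\intr(D(T)^\circ)$, and it also lies in $2\bL^\ast$; by the key lemma this forces $u=u'$. Thus the map is injective, and as $o$ is not in its image, $\card{W(T,\bL)}\le 4^d-1<4^d$.

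\textbf{Part \myeqref{item:finiteness-thm:III}.} Here the hypothesis that $W(T,\bL)$ contains $d$ linearly independent vectors $u_1,\dots,u_d$ enters. Writing $\norm{\cdot}_{D(T)^\circ}$ for the gauge of $D(T)^\circ$, we have $\norm{u_i}_{D(T)^\circ}=w(T,u_i)<1$, so the last successive minimum of $D(T)^\circ$ with respect to $\bL^\ast$ is $\le1$, while the key lemma gives that the first successive minimum equals the lattice width $w(T,\bL)\ge\tfrac12$; combined with the volume bound from \myeqref{item:finiteness-thm:II}, Minkowski's theorem on successive minima pins down the product of the minima. Dualizing via Mahler's transference theorem bounds the last successive minimum of $\conv(D(T))=\bigl(D(T)^\circ\bigr)^\circ$ with respect to $\bL$ by a dimensional constant, and then a reduced basis $b_1',\dots,b_d'$ of $\bL$ with respect to $\conv(D(T))$ has each $\norm{b_i'}_{\conv(D(T))}$ bounded by a dimensional constant. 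Taking $b_1^\ast,\dots,b_d^\ast$ to be the dual basis of $\bL^\ast$, every $u\in W(T,\bL)\subseteq D(T)^\circ$ has coordinates $\sprod{b_i'}{u}$ satisfying $\card{\sprod{b_i'}{u}}\le h(D(T)^\circ,b_i')=\norm{b_i'}_{\conv(D(T))}$, and keeping track of the constants above yields $W(T,\bL)\subseteq\sum_{i=1}^d\{-k,\dots,k\}b_i^\ast$ with $k\le 2(3/2)^{d-2}(d!)^2$.

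\textbf{Main obstacle.} The reformulation together with the key lemma makes \myeqref{item:finiteness-thm:I} and \myeqref{item:finiteness-thm:II} essentially automatic, and both rest only on Minkowski's first theorem and an elementary convexity/pigeonhole argument. The delicate part is \myeqref{item:finiteness-thm:III}: producing the explicit constant $2(3/2)^{d-2}(d!)^2$ requires the sharp classical estimates relating a Minkowski-reduced basis to the successive minima, the exact constant in Mahler's transference theorem, and careful bookkeeping with how these combine; a cruder version of the same chain already gives the qualitative finiteness, so the real work is in matching the stated bound.
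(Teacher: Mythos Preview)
Your argument is correct and uses the same core ingredients as the paper; the packaging differs in two places.

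\textbf{The key lemma.} Your proof that $w(T,u)\ge\tfrac12$ for every nonzero $u\in\bL^\ast$ is different from the paper's. You use that $T$ is a transversal of $\bM/\bL$, so $\sprod{u}{T}$ hits every residue class of $\tfrac1q\Z/\Z$; the paper instead picks $x\in F(T,u)$, writes $2x=s+t$ with $s\in\bL,\ t\in T$, and derives a contradiction from $\sprod{s}{u}\in\Z$. Both are short and valid; yours is perhaps more conceptual.

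\textbf{Parts \myeqref{item:finiteness-thm:I} and \myeqref{item:finiteness-thm:II}.} These match the paper's proofs essentially verbatim (Minkowski's first theorem applied to $D(T)^\circ$ and $2\bL^\ast$; the pigeonhole mod $4\bL^\ast$).

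\textbf{Part \myeqref{item:finiteness-thm:III}.} Here the paper is more direct than your sketch. It does not pass through successive minima at all: it simply observes $\tfrac12\conv(W(T,\bL))\cap\bL^\ast=\{o\}$ (from the key lemma), applies the packaged statement ``if $\intr(K)\cap\Lambda=\{o\}$ then $(3/2)^{d-2}(d!)^2K^\circ$ contains a basis of $\Lambda^\ast$'' with $K=\tfrac12\conv(W(T,\bL))$ and $\Lambda=\bL^\ast$, and polarizes. Your route via $\lambda_1(D(T)^\circ,\bL^\ast)\ge\tfrac12$, Mahler transference $\lambda_d(\conv(D(T)),\bL)\,\lambda_1(D(T)^\circ,\bL^\ast)\le(d!)^2$, and then the $(3/2)^{d-2}$-theorem to pass from $\lambda_d$ to a basis, is equivalent and does produce exactly $k\le 2(3/2)^{d-2}(d!)^2$. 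Two remarks: your invocation of ``Minkowski's theorem on successive minima pins down the product of the minima'' is unnecessary and can be dropped; and since you flag the constant-matching as the ``main obstacle,'' you should actually write the three-line computation ($\lambda_d\le 2(d!)^2$, basis norm $\le(3/2)^{d-2}\lambda_d$, hence $|\sprod{b_i'}{u}|\le\|b_i'\|_{\conv(D(T))}\le 2(3/2)^{d-2}(d!)^2$) rather than assert it. Once you do, nothing delicate remains.
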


Theorem~\ref{finiteness-thm} shows that there are essentially finitely many sets $W(T,\bL)$ satisfying the assumptions of Theorem~\ref{finiteness-thm}. In fact, choosing appropriate coordinates (see also Proposition~\ref{change:prp}) we can assume that the basis $b_1^\ast,\ldots,b_d^\ast$ in~\myeqref{item:finiteness-thm:III} is the standard basis $e_1,\ldots,e_d$. Then $\bL^\ast=\Z^d$ and $W(T,\bL)$ is a subset of the finite set $\{-k,\ldots,k\}^d$ with $k$ as in~\myeqref{item:finiteness-thm:III}. Thus, all the finitely many possible `shapes' of $W(T,\bL)$ can be found in $\{-k,\ldots,k\}^d$, which is a set of cardinality $(2k+1)^d$. So there are at most~$2^{(2k+1)^d}$ essentially different shapes of $W(T,\bL)$. This bound could easily be improved, for example, by taking into account~\myeqref{item:finiteness-thm:I} and~\myeqref{item:finiteness-thm:II}, but we do not elaborate on this.

In Theorem~\ref{thm:ABC} for  $d=2$ the condition $\aff(F) \subseteq F + \bL$ clearly holds for each facet $F$ of $\conv(S)$. So, for $d=2$ we get \myeqref{item:ABC:A}\,$\Leftrightarrow$\,\myeqref{item:ABC:B} in Theorem~\ref{thm:ABC} and hence a characterization of the $\bM$-convexity of~$S \oplus T$. Based on this we give the following explicit description:

\begin{theorem}[Complete classification in dimension two]\label{thm:complete-classification}
  Let~$d=2$, $(\bM,\bL,T) \in \cT^d$, and $S \subseteq \bL$. Then the following statements are equivalent:
  \begin{enumerate}[(i)]
  \item\label{thm-item:complete-classification:ST-give-NontrHomPair} $S \oplus T$ and $S \oplus (-T)$ form a pair of nontrivially homometric $\bM$-convex sets. 
  \item\label{thm-item:complete-classification:T-has-width-1} There exist $k \in \N$, a basis $a_1,a_2$ of $\bM$, and a basis $b_1,b_2$ of $\bL$ such that the following conditions hold:
  \begin{enumerate}[(a)]
    \item $b_1= (k+1) a_1 - a_2$ and $b_2= k a_1 + a_2$.
    \item There exists $v \in \bM$ such that $T+v = \{0,\ldots,k\} a_1 \cup \bigl( \{0,\ldots,k-1\} a_1 +  a_2 \bigr)$.
    \item The set $S$ is noncentrally symmetric, finite, two-dimensional, $\bL$-convex, and every edge of the polygon $\conv(S)$ is parallel to $b_1$ or $b_2$ or $b_2-b_1$.
  \end{enumerate}
  \end{enumerate}
\end{theorem}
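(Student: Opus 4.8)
The plan is to prove the two implications separately. The direction \myeqref{thm-item:complete-classification:T-has-width-1}$\Rightarrow$\myeqref{thm-item:complete-classification:ST-give-NontrHomPair} amounts to re-deriving Example~\ref{AveL12:example}, whereas \myeqref{thm-item:complete-classification:ST-give-NontrHomPair}$\Rightarrow$\myeqref{thm-item:complete-classification:T-has-width-1} carries the real weight and ultimately relies on a finite, computer-assisted case analysis.

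For \myeqref{thm-item:complete-classification:T-has-width-1}$\Rightarrow$\myeqref{thm-item:complete-classification:ST-give-NontrHomPair} I would apply the change of coordinates sending the basis $a_1,a_2$ of $\bM$ to the standard basis; then $(\bM,\bL,T)$ is precisely the data of Example~\ref{AveL12:example}. A direct computation of the dual basis $b_1^\ast,b_2^\ast$ of $b_1,b_2$ and of the relevant width functions (this is the content of Figure~\ref{fig:width:AveL12:strips}) gives $W(T,\bL)=\{\pm b_1^\ast,\pm b_2^\ast,\pm(b_1^\ast+b_2^\ast)\}$, and these six vectors are exactly the outer normals of the edge directions $b_1,b_2,b_2-b_1$ allowed by~(c). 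Hence every edge of $\conv(S)$ has a normal in $W(T,\bL)$, so condition~\myeqref{item:ABC:A} of Theorem~\ref{thm:ABC} holds ($S$ being finite, $2$-dimensional and $\bL$-convex by~(c)), and Theorem~\ref{thm:ABC}\,\myeqref{item:ABC:A}$\Rightarrow$\myeqref{item:ABC:B} yields that $S\oplus T$ is $\bM$-convex; since $W(-T,\bL)=W(T,\bL)$, so is $S\oplus(-T)$. Finally $T$ is noncentrally symmetric --- its two rows $\{0,\ldots,k\}a_1$ and $\{0,\ldots,k-1\}a_1+a_2$ have different cardinalities, so no point reflection fixes $T$ --- and $S$ is noncentrally symmetric by~(c), so Proposition~\ref{prp:dir:sum:hom}\,\myeqref{S+T,S+(-T):nontr:hom:metr} gives that $S\oplus T$, $S\oplus(-T)$ is a nontrivially homometric pair.

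For \myeqref{thm-item:complete-classification:ST-give-NontrHomPair}$\Rightarrow$\myeqref{thm-item:complete-classification:T-has-width-1}: first, Proposition~\ref{prp:dir:sum:hom}\,\myeqref{S+T,S+(-T):nontr:hom:metr} forces both $S$ and $T$ to be noncentrally symmetric. Next, $S$ is $\bL$-convex: since $\bM=\bL\oplus T$ is direct we have $D(T)\cap\bL=\{o\}$, and for $x\in\conv(S)\cap\bL$ and any $t\in T$ the point $x+t$ lies in $\conv(S\oplus T)\cap\bM=S\oplus T$, say $x+t=s'+t'$ with $s'\in S$ and $t'\in T$; then $x-s'=t'-t\in D(T)\cap\bL=\{o\}$, so $x=s'\in S$. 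A $1$-dimensional lattice-convex set is a block of consecutive lattice points on a line and a $0$-dimensional one is a single point, both centrally symmetric; hence $S$ and $T$ are both $2$-dimensional. Now $S$ is finite, $2$-dimensional, $\bL$-convex and $S\oplus T$ is $\bM$-convex, so the equivalence \myeqref{item:ABC:A}$\Leftrightarrow$\myeqref{item:ABC:B} of Theorem~\ref{thm:ABC} (valid for $d=2$) tells us that every edge of the polygon $\conv(S)$ has a normal vector in $W(T,\bL)$. A noncentrally symmetric $2$-dimensional polygon has at least three pairwise non-parallel edge directions (with exactly two it would be a parallelogram), so $W(T,\bL)$ contains three pairwise non-parallel --- in particular two linearly independent --- vectors, and Theorem~\ref{finiteness-thm} applies.

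The remaining and main task is to classify all triples $(\bM,\bL,T)\in\cT^2$ with $T$ noncentrally symmetric (hence $2$-dimensional) and $W(T,\bL)$ containing three pairwise non-parallel vectors, and to show that each satisfies~(a) and~(b) after a change of coordinates. Here Theorem~\ref{finiteness-thm}\,\myeqref{item:finiteness-thm:III} together with Proposition~\ref{change:prp} lets one assume $\bL=\bL^\ast=\Z^2$ and $W(T,\bL)\subseteq\{-8,\ldots,8\}^2$, so that only finitely many possibilities for $W(T,\bL)$ remain --- further restricted by $\card{W(T,\bL)}<16$ and $\vol(\conv W(T,\bL))<16$. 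For each admissible shape one must show that $(\bM,\bL,T)\in\cT^2$ together with $\bM$-convexity of $T$ forces $(\bM,\bL,T)$ into the one-parameter family of~(b): geometrically, $w(T,u)<1$ for three non-parallel $u$ confines $\conv(T)$ to an intersection of three strips of width less than~$1$, and $\conv(T)\cap\bM=T$ together with $\bM=\bL\oplus T$ then leaves only the lattice trapezoids of Example~\ref{AveL12:example}. This finite case-check is exactly what the Magma enumeration carries out, and it is the step I expect to be the main obstacle. Once it is settled, $W(T,\bL)=\{\pm b_1^\ast,\pm b_2^\ast,\pm(b_1^\ast+b_2^\ast)\}$ again, so the edge-normal condition extracted earlier from Theorem~\ref{thm:ABC} says exactly that every edge of $\conv(S)$ is parallel to $b_1$, $b_2$, or $b_2-b_1$; combined with the noncentral symmetry, finiteness, $2$-dimensionality and $\bL$-convexity of $S$ established above, this is~(c).
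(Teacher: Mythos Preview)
Your overall plan is sound—the preliminary reductions (both $S$ and $T$ noncentrally symmetric and $2$-dimensional, $S$ $\bL$-convex, every edge-normal of $\conv(S)$ lying in $W(T,\bL)$, hence $W(T,\bL)$ containing three pairwise nonparallel vectors) coincide with the paper's Lemma~\ref{lem:three-directions:in:sU_T} and Lemma~\ref{lem:S+T:convex=>S:convex}. The implication \myeqref{thm-item:complete-classification:T-has-width-1}$\Rightarrow$\myeqref{thm-item:complete-classification:ST-give-NontrHomPair} via Theorem~\ref{thm:ABC} is fine (the paper simply cites \cite{AveL12} here).

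The gap is in your enumeration strategy. You propose to invoke Theorem~\ref{finiteness-thm} to reduce to finitely many shapes of $W(T,\bL)\subseteq\{-8,\ldots,8\}^2$ and then, for each shape, ``show that $(\bM,\bL,T)$ is forced into the one-parameter family''. But Theorem~\ref{finiteness-thm} bounds only $W(T,\bL)$, not $(\bM,T)$: after normalising $\bL=\Z^2$ the entire family of Example~\ref{AveL12:example} (all $k\in\N$) shares the \emph{same} six-element set $W(T,\bL)$, so no finite case-check over $(\bM,T)$ is possible at that stage. In particular, the sentence ``this finite case-check is exactly what the Magma enumeration carries out'' is not accurate: the paper's Magma search does not enumerate shapes of $W(T,\bL)$ at all.

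What the paper actually does (Lemma~\ref{key-to-enumeration}) is sharper and specifically two-dimensional. Using Proposition~\ref{prp:o-sym:contains-a-bit-more-than-a-basis} one obtains a basis $b_1^\ast,b_2^\ast$ of $\bL^\ast$ with $b_1^\ast,b_2^\ast,b_1^\ast+b_2^\ast\in W(T,\bL)$; hence $T$ lies in a translate of the Dirichlet cell $(0,1]b_1+(0,1]b_2$, and one compares $T$ with the triangle $\Delta=\conv\{o,b_1,b_2\}$. The exact planar flatness constant (Theorem~\ref{flatness-thm}) together with an area--width inequality (Theorem~\ref{thm:area-vs-width-ineq}) then forces $w(\Delta,\bM)\in\{2,3,4\}$ and, for $w(\Delta,\bM)\in\{3,4\}$, pins $\det(\bL)/\det(\bM)$ to an explicit finite range. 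The case $w(\Delta,\bM)=2$ (equivalently $w(T,\bM)=1$) is exactly the infinite one-parameter family and is handled by quoting \cite{AveL12}; only the remaining cases $w(\Delta,\bM)\in\{3,4\}$ are finite, and there the Magma search (with $\bM=\Z^2$, $\bL$ ranging over the finitely many sublattices of the allowed determinants via Hermite normal form, and $T$ ranging over the finitely many Dirichlet-cell tiles of Lemma~\ref{lem-enum-T-given-bL}) finds that every surviving tile is centrally symmetric and hence discarded. The key idea you are missing is this split: isolate the width-one case as the genuine one-parameter family, and reduce everything else to a bounded-determinant search using the planar flatness theorem rather than the general Theorem~\ref{finiteness-thm}.
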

Parts (a) and (b) of Theorem~\ref{thm:complete-classification}\,\myeqref{thm-item:complete-classification:T-has-width-1} are illustrated by Figure~\ref{fig-shape-of-T}, part (c) is illustrated in Figures~\ref{fig:main:A}\,(d), (e), and (f).

  \begin{figure}[htb]
  \begin{center}
          \begin{tabular}{p{6cm}}
                \includegraphics{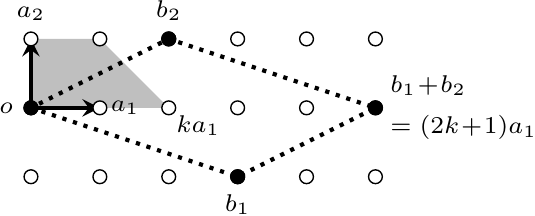}
              \end{tabular}
  \end{center}
  \caption{Illustration of Theorem~\ref{thm:complete-classification}\,\myeqref{thm-item:complete-classification:T-has-width-1}\,(a) and (b) with $k =2$. The gray region is $\conv(T+v)$.}
  \label{fig-shape-of-T}
  \end{figure}
  In view of the presented results, we formulate the following problems.

\begin{problem}
  We call pairs $K, L$ of homometric sets, which can (resp.\ cannot) be represented as $K= S \oplus T$, $L = S \oplus (-T)$, {geometrically constructible} (resp.\ {geometrically inconstructible}).  Do there exist geometrically inconstructible homometric pairs of lattice-convex sets? This question was raised in \cite[Question~2.4]{AveL12} for $d=2$. Note that, for this question, the lattice-convexity assumption on $K,L$ is crucial; without it, geometrically inconstructible pairs $K, L$ of homometric sets \emph{do} exist for every $d \ge 1$; see \cite{RoSey82}. 
\end{problem}

\begin{problem}
 We ask whether Theorem~\ref{thm:ABC} can be improved to a more precise description of sets~$S \subseteq \bL$ for which~$S \oplus T$ is $\bM$-convex. We also ask whether Theorem~\ref{thm:ABC} can be extended to sets $S$ of any dimension.
\end{problem}

\begin{problem}
 In the situation of Theorem~\ref{finiteness-thm} we ask for an explicit enumeration of all possible sets $W(T,\bL)$ for fixed dimensions, for example for $d=2$ and $d=3$.
\end{problem}

\section{Proofs of results for arbitrary dimension}\label{sect-proofs-all-dim}

We start with a proof of Proposition~\ref{prp:dir:sum:hom}. 

\begin{proof}[Proof of Proposition~\ref{prp:dir:sum:hom}]
	\myeqref{S+(-T):too}: This assertion was mentioned in \cite[p.~221]{AveL12} without proof. Consider an arbitrary $x \in S \oplus (-T)$ and any $s, s' \in S$ and $t,t' \in T$ with $x = s-t = s' - t'.$ We need to verify that $s=s'$ and $t=t'$. We have $x+t+t'=  s+t'\in S\oplus T$ and also $x+t+ t' =s' + t\in S\oplus T$. Since the sum of $S$ and $T$ is direct, we obtain $s= s'$ and $t=t'$. 
	
	\myeqref{S+T,S+(-T):hom:metr}: It is straightforward to check that for every finite $K \subseteq \R^d$ and every $u \in \R^d$, one has 
	\begin{equation*}
		g_K(u) = \card{ \setcond{(x_1,x_2)}{x_1,x_2 \in K, \ u=x_1-x_2}}.
	\end{equation*}
	Using the fact that the mapping $(s,t) \mapsto s +t$ is a bijection from $S \times T$ to $S \oplus T$, we obtain
	\begin{equation}
		\label{g:plus:expr}
		g_{S \oplus T}(u) = \card{ \setcond{(s_1,t_1,s_2,t_2)}{s_1,s_2 \in S, \ t_1, t_2 \in T, \ u=(s_1 + t_1) - (s_2 + t_2)} }.
	\end{equation}
	Analogously, for $S \oplus (-T)$ we have 
	\begin{equation}
		\label{g:minus:expr}
		g_{S \oplus (-T)}(u) = \card{ \setcond{(s_1,t_1,s_2,t_2)}{s_1,s_2 \in S, \ t_1, t_2 \in T, \ u=(s_1 - t_1) - (s_2 - t_2)} }.
	\end{equation}
	The set used in the right-hand side of \myeqref{g:plus:expr} is mapped bijectively to the set used in the right-hand side of \myeqref{g:minus:expr} via the bijection $(s_1,t_1,s_2,t_2) \mapsto (s_1,t_2,s_2,t_1)$. This shows $g_{S \oplus T}(u) = g_{S \oplus (-T)}(u)$. 
	
	\myeqref{S+T,S+(-T):nontr:hom:metr}: We show that $S \oplus T$ and $S \oplus (-T)$ are trivially homometric if and only if $S$ or $T$ is centrally symmetric. The sufficiency is straightforward. For the necessity assume that $S \oplus T$ and $S \oplus (-T)$ are trivially homometric, which means by the definition of trivial homometry that one of the following two cases occurs.
	
	\emph{Case~1: $S \oplus T$ and $S \oplus (-T)$ coincide up to a translation.} One has $S \oplus T = c + (S \oplus (-T))$ for some $c \in \R^d$. We verify the central symmetry of $T$ by showing  $T = c - T$ by induction on~$\card{T}$. For~$\card{T} \le 1$ there is nothing to show. Assume that $\card{T} \ge 2$ and that for sets of smaller cardinality the assertion is true. 
 	Taking the convex hull, we get $\conv(S) + \conv(T) = c + \conv(S) + \conv(-T)$. By the cancellation law for the Minkowski sum (see \cite[p.\ 48]{Schneider2014}), we obtain $\conv(T) = c + \conv(-T) = c - \conv(T)$. That is, $\conv(T)$ is centrally symmetric. In particular, since $\card{T} \ge 2$, there exist two distinct $t_1, t_2 \in T$ such that $t_1, t_2$ are vertices of $T$ and $t_1 = c - t_2$. For the set $T'  := T \setminus \{t_1, t_2\}$ one has $S \oplus T' = c + (S \oplus (-T'))$. Using the induction assumption, we conclude that $T' = c - T'$. The latter implies $T = c - T$.
	
	\emph{Case~2: $S \oplus T$ and $S \oplus (-T)$ coincide up to a reflection in a point.} One has $S \oplus T = c - (S \oplus (-T))$  for some $c \in \R^d$. Analogously to the previous case, one can show by induction on $\card{S}$ that $S = c -S$, that is, $S$ is centrally symmetric. 
\end{proof}

\begin{remark}
	Parts~\myeqref{S+T,S+(-T):hom:metr} and~\myeqref{S+T,S+(-T):nontr:hom:metr} of Proposition~\ref{prp:dir:sum:hom} can also be proved algebraically, using the group ring $\Z[\R^d]$, by employing tools from \cite[Theorem~2.1]{RoSey82}.
\end{remark}

The following statement will be used in the proof of Theorem~\ref{thm:ABC} and also later in Section~\ref{sec:proof:classification}.
\begin{lemma} \label{lem:S+T:convex=>S:convex}
	Let~$(\bM,\bL,T) \in \cT^d$, let $S \subseteq \bL$ and let $S \oplus T$ be $\bM$-convex. Then $S$ is $\bL$-convex. 
\end{lemma}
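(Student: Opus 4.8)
The plan is to show that $S = C \cap \bL$ for a suitable convex set $C$, namely $C = \conv(S)$; equivalently, that $\conv(S) \cap \bL \subseteq S$ (the reverse inclusion being trivial). Since $S \oplus T$ is $\bM$-convex, there is a convex set $C'$ with $S \oplus T = C' \cap \bM$, and without loss we may take $C' = \conv(S \oplus T) = \conv(S) + \conv(T)$. So the hypothesis gives us
\[
	(\conv(S) + \conv(T)) \cap \bM = S \oplus T = S + T.
\]
The goal is to "divide by $T$": to deduce $\conv(S) \cap \bL = S$ from this.

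First I would fix a point $z \in \conv(S) \cap \bL$ and aim to show $z \in S$. Pick any $t_0 \in T$. Then $z + t_0 \in \conv(S) + \conv(T)$, and since $z \in \bL \subseteq \bM$ and $t_0 \in T \subseteq \bM$ we have $z + t_0 \in \bM$; hence $z + t_0 \in (\conv(S)+\conv(T)) \cap \bM = S + T$. So there are $s \in S$, $t \in T$ with $z + t_0 = s + t$, i.e. $z - s = t - t_0 \in \bL$ (as both $z,s \in \bL$) and simultaneously $z - s = t - t_0 \in \bM$; but more to the point, $z - s \in \bL$ and $z - s = t - t_0$ where $t, t_0 \in T$. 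The key now is that $T$ is a fundamental domain for $\bM / \bL$: from $\bM = \bL \oplus T$, the coset $(t - t_0) + \bL$ contains the element $t - t_0$ of $\bL$ only if... here I must be careful. The cleaner route: $z + t_0 \in \bM = \bL \oplus T$, so $z + t_0 = \ell + t$ for unique $\ell \in \bL$, $t \in T$; since $z \in \bL$, uniqueness of the decomposition forces $t = t_0$ and $\ell = z$. Wait — that requires $z + t_0$ and $z + t_0$ to have the decomposition $z + t_0$ with $\bL$-part $z$ and $T$-part $t_0$, which is indeed a valid decomposition, hence *the* decomposition. Combining with $z + t_0 = s + t \in S + T$ and the directness of $S \oplus T$ inside $\bL \oplus T$: the $T$-coordinate of $z+t_0$ is $t_0$, and also equals $t$, so $t = t_0$; then $s = z$, giving $z = s \in S$.

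The main obstacle — really the only subtlety — is justifying that one may take the convex set witnessing $\bM$-convexity of $S \oplus T$ to be exactly $\conv(S) + \conv(T)$, i.e. that $\conv(S \oplus T) = \conv(S) + \conv(T)$. This is the standard identity $\conv(X + Y) = \conv(X) + \conv(Y)$ for the Minkowski sum, valid for arbitrary $X, Y \subseteq \R^d$, so no trouble arises. Once this is in hand, everything reduces to the uniqueness of decomposition in $\bM = \bL \oplus T$ together with the directness of $S \oplus T$, and the argument above closes. I would also remark that the hypothesis that the sum $S \oplus T$ be direct is automatic here: $S \subseteq \bL$ and $T \subseteq T$ with $\bM = \bL \oplus T$ already forces the sum of $S$ and $T$ to be direct.
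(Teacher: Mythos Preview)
Your proof is correct and follows essentially the same approach as the paper: both arguments reduce to showing that $(\bL \cap \conv(S)) + T \subseteq \conv(S \oplus T) \cap \bM = S \oplus T$ and then invoke the uniqueness of the decomposition $\bM = \bL \oplus T$ to cancel $T$. The paper states this at the level of set inclusions in three lines, while you unwind the same argument pointwise for a single $z \in \conv(S) \cap \bL$ and a chosen $t_0 \in T$.
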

\begin{proof} We need to show $\bL\cap \conv(S)\subseteq S$. In view of $\bM=\bL\oplus T$, the latter is equivalent to $(\bL\cap \conv(S)) \oplus T\subseteq S\oplus T$. Since $S\oplus T$ is $\bM$-convex, the previous inclusion amounts to $(\bL\cap \conv(S)) \oplus T \subseteq \conv(S\oplus T)\cap \bM$, which is straightforward.
\end{proof}

For a $d$-dimensional finite $\bL$-convex set $S$ in $\R^d$ we define 
$U(S,\bL)$ to be the set of all primitive vectors $u$ of the lattice $\bL^\ast$ which are outer normals to facets of the polytope $\conv(S)$. 

\begin{proof}[Proof of Theorem~\ref{thm:ABC}] \myeqref{item:ABC:A}\,$\Rightarrow$\,\myeqref{item:ABC:B}:  Let \myeqref{item:ABC:A} hold, that is, the set $S$ is $\bL$-convex and $w(T,u)<1$ for each $u \in U(S,\bL)$. Since $S \subseteq \bL$ and $\bM = \bL \oplus T$, the sum of $S$ and $T$ is clearly direct. It remains to show that $S \oplus T$ is $\bM$-convex.
It suffices to show $\bM \cap \conv ( S \oplus T) \subseteq S \oplus T$. We have
 \begin{equation*}
  \bM \cap \conv( S \oplus T) = \bM \cap (\conv(S) + \conv(T)) = (\bL \oplus T) \cap (\conv(S) + \conv(T)).
 \end{equation*}
 Thus, each element of $\bM \cap \conv (S \oplus T)$ can be given as $s+t = s'+t'$ with $s \in \bL$, $t \in T$, $s' \in \conv(S)$, and $t'\in \conv(T)$. We show that, for $s,t, s', t'$ as above, one has $s \in S$. We argue by contradiction, so assume that $s \not\in S$. Since $S = \bL \cap \conv(S)$, we obtain $s \not\in \conv(S)$. Hence $S$ and~$s$ lie on different sides of the affine hull of some facet of $\conv(S)$, that is,  $\sprod{s}{u} > h(S,u)$ for some $u \in U(S,\bL)$. In view of $u \in \bL^\ast$ we have $\sprod{s}{u}, h(S,u) \in \integer$, so  $\sprod{s}{u} > h(S,u)$ can be improved to $\sprod{s}{u} \ge h(S,u)+1$. By assumption, $\sprod{u}{t'-t}\leq w(T,u) < 1$. Hence $\sprod{u}{s+t-t'} > h(S,u)$. On the other hand, $s + t-t' = s' \in \conv(S)$ and so $\sprod{u}{s+t-t'} \le h(S,u)$, a contradiction.

\myeqref{item:ABC:B}\,$\Rightarrow$\,\myeqref{item:ABC:C}:  Let~$S\oplus T$ be $\bM$-convex. Then $S$ is $\bL$-convex due to Lemma~\ref{lem:S+T:convex=>S:convex}. Consider an arbitrary facet $F$ of $\conv(S)$ satisfying $\aff(F) \subseteq F + \bL$. Let~$u\in U(S,\bL)$ be an outer normal of~$F$, that is, $F(\conv(S),u) =F$. We show $u \in W(T,\bL)$ by contradiction. So assume $u \notin W(T,\bL)$. Then $w(T,u) \ge 1$. Replacing $T$ with an appropriate translation of $T$ by a vector in $\bM$, we assume $o \in T$ and $w(T,u) = h(T,u)$. Replacing $S$ with an appropriate translation of $S$ by a vector in~$\bL$, we assume $o \in F(S,u)$. Choose any $x \in F(T,u)$. The set $P:= \conv(F \cup (F+x))$ is a prism with bases $F$ and $x+F$. We introduce the hyperplane $H:= \setcond{y \in \R^d}{\sprod{u}{y} = \floor{h(T,u)}}$, where $\floor{h(T,u)} \ge 1$, because $h(T,u) = w(T,u) \ge 1$.  The section $Q := P \cap H$ of the prism $P$ coincides with its base $F$, up to translations. 

Let us first show that $Q \cap \bL \ne \emptyset$. We have $H \cap \bL \ne \emptyset$ since $H$ is defined by a primitive vector~$u\in\bL^\ast$ and $\floor{h(T,u)}$ in the definition of $H$ is integer. Choose $a \in H \cap \bL$. The base $F$ of~$P$ and the section $Q$ of $P$ coincide up to translations. That is, $F = Q + v$ for some $v \in \R^d$. It follows that $a \in H = \aff(Q) = \aff(F) - v$ and so, $a+v \in \aff(F)$. Since $\aff(F) \subseteq F + \bL$, we get $a + v \in F + \bL$. Thus, $a +v \in F + b$ for some $b \in \bL$. Consequently, $a - b \in \bL$ and $a-b \in F - v = Q$. This shows $Q \cap \bL \ne \emptyset$. 

Choose any $z \in Q \cap \bL$. Since $z \in Q \subseteq P \subseteq \conv(S+T)$ and $z \in \bL \subseteq \bM$, we get $z \in \conv(S \oplus T) \cap \bM$. By \myeqref{item:ABC:B} we have $z \in S \oplus T$, so $z= s + t$ for some $s \in S$ and $t \in T$. The condition $o \in F(S,u)$ implies $\sprod{s}{u} \le 0$. Since $z \in Q \subseteq H$, we get $\sprod{z}{u} = \floor{h(T,u)} \ge 1$. Thus, on the one hand, $t=z-s \in \bL$ and, on the other hand, $\sprod{t}{u} = \sprod{z}{u} - \sprod{s}{u} \ge 1$ and hence $t \ne o$. It follows that $o$ and $t$ are distinct points of $\bL$, both belonging to $T$. This is a contradiction to the fact that the sum of $\bL$ and $T$ is direct. 
\end{proof}

\begin{remark}[Relation to the covering radius]
	The property $\aff(F) \subseteq F+ \bL$ in Theorem~\ref{thm:ABC}\,\myeqref{item:ABC:C} can be expressed using the well-known notion of the {covering radius} (also called {inhomogenious minimum}); see \cite[p.\ 381]{MR893813} and \cite[p.\ 579]{0659.52004}. To illustrate this, assume for simplicity that $o \in \aff(F)$, so that $\aff(F)$ is a linear space. In this case, the above property means that the covering radius $\mu$ of the $(d-1)$-dimensional polytope $F$ in the $(d-1)$-dimensional linear space $\aff(F)$ with respect to the lattice~$\aff(F) \cap \bL$ satisfies the inequality $\mu \le 1$.
\end{remark}

In the following considerations, we frequently switch to `more convenient' coordinates. The change of coordinates is motivated by the following proposition, which uses the notation $A(X):=\setcond{Ax}{x\in X}$ for a matrix $A \in \R^{d \times d}$ and a set $X\subseteq\R^d$ and the matrix $(A^{-1})^\top$ (the transposed of the inverse matrix of $A$). 

\begin{proposition} \label{change:prp}
	Let~$(\bM,\bL,T) \in \cT^d$. Let~$A \in \R^{d \times d}$ be a nonsingular matrix. Then the following relations hold: 
	\begin{align*}
		\bigl(A(\bM),A(\bL), A(T)\bigr) & \in \cT^d,
		\\ (A(\bM))^\ast & = (A^{-1})^\top(\bM^\ast),
		\\ (A(\bL))^\ast & = (A^{-1})^\top(\bL^\ast),
		\\ W\bigl(A(T),A(\bL)\bigr) & = (A^{-1})^\top\bigl( W(T,\bL) \bigr).
	\end{align*}
\end{proposition}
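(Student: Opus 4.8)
The plan is to verify the four stated relations one by one, each reducing to a direct computation with definitions. Throughout, I will use the elementary fact that a nonsingular linear map $x \mapsto Ax$ sends a lattice of rank $d$ to a lattice of rank $d$ (if $b_1,\ldots,b_d$ is a basis of $\bL$, then $Ab_1,\ldots,Ab_d$ is a basis of $A(\bL)$), and that $A$ commutes with Minkowski sums in the sense $A(X+Y)=A(X)+A(Y)$ and preserves directness of sums (since $A$ is injective, a representation $Az = Ax + Ay$ with $Ax \in A(X)$, $Ay \in A(Y)$ forces $z = x+y$, so uniqueness transfers).

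First I would handle $(A(\bM),A(\bL),A(T)) \in \cT^d$: apply $A$ to the identity $\bM = \bL \oplus T$ to get $A(\bM) = A(\bL) + A(T)$, note the sum stays direct as above, observe $A(\bL) \subseteq A(\bM)$ and that both are rank-$d$ lattices, and finally check that $A(T)$ is a nonempty finite $A(\bM)$-convex set --- writing $T = C \cap \bM$ for convex $C$, we have $A(T) = A(C) \cap A(\bM)$ with $A(C)$ convex. Next, the two dual-lattice identities are instances of the same statement, so I would prove $(A(\bL))^\ast = (A^{-1})^\top(\bL^\ast)$ once. A vector $y$ lies in $(A(\bL))^\ast$ iff $\sprod{y}{Ax} \in \Z$ for all $x \in \bL$; since $\sprod{y}{Ax} = \sprod{A^\top y}{x}$, this holds iff $A^\top y \in \bL^\ast$, i.e. iff $y \in (A^\top)^{-1}(\bL^\ast) = (A^{-1})^\top(\bL^\ast)$.

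For the last relation I would first record the transformation rule for the support and width functions: from $\sprod{u}{Ax} = \sprod{A^\top u}{x}$ we get $h(A(T),u) = h(T, A^\top u)$ and hence $w(A(T),u) = w(T,A^\top u)$ for every $u$. Now $u \in W(A(T),A(\bL))$ means $u \in (A(\bL))^\ast \setminus \{o\}$ and $w(A(T),u) < 1$. By the already-proven dual identity, $u \in (A(\bL))^\ast \setminus\{o\}$ iff $A^\top u \in \bL^\ast \setminus \{o\}$ (using nonsingularity of $A^\top$ for the ``nonzero'' part), and by the width rule $w(A(T),u)<1$ iff $w(T, A^\top u) < 1$. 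Hence $u \in W(A(T),A(\bL))$ iff $A^\top u \in W(T,\bL)$, i.e. iff $u \in (A^\top)^{-1}(W(T,\bL)) = (A^{-1})^\top(W(T,\bL))$, which is exactly the claim.

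I do not anticipate a genuine obstacle here; the proposition is a bookkeeping lemma and every step is a one-line identity. The only point requiring a modicum of care is the consistent use of the transpose: the dual lattice and the support/width functions both transform via $A^\top$ rather than $A$, so the change of coordinates on $W(T,\bL)$ and on $\bL^\ast$ is by $(A^{-1})^\top$, not by $A$. I would state the identity $\sprod{u}{Ax} = \sprod{A^\top u}{x}$ explicitly at the outset and invoke it uniformly, which keeps all four verifications short and transparent.
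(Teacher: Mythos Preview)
Your proposal is correct; each of the four relations is verified by the natural direct computation, and your use of the identity $\sprod{u}{Ax}=\sprod{A^\top u}{x}$ to handle the dual lattice and the width simultaneously is exactly the right bookkeeping. The paper in fact omits the proof entirely, remarking only that it is straightforward and relies on basic properties of duality of lattices and polarity of sets, so your argument is precisely the routine verification the authors had in mind.
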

We omit the straightforward proof of Proposition~\ref{change:prp}, relying on basic properties of duality of lattices and polarity of sets. In view of Proposition~\ref{change:prp}, choosing an appropriate $A$ (resp. $(A^{-1})^\top$), we will be able to assume that $\bL$ or $\bM$ (resp.\ $\bL^\ast$ or $\bM^\ast$) is $\Z^d$. Furthermore, Proposition~\ref{change:prp} allows to keep track of the respective change of the set $W(T,\bL)$.

\begin{proof}[Proof of Corollary~\ref{nontrivial-S-cor}]
	After possibly changing coordinates in $\R^d$ we assume $b_i=e_i$ for every $i \in \{1,\ldots,d\}$ and hence $\bL=\bL^\ast = \Z^d$. Let~$b:=b_{d+1} \in \Z^d \setminus \{o\}$. Since $b$ is not parallel to any of the vectors $e_1,\ldots,e_d$, the faces $F(C,b)$ and $F(C,-b)$ of the cube $C:=[-1,1]^d$ are not facets. Fix $\eps\in\Q$ with $0< \eps < h(C,b)$. The polytope $P  := \setcond{x \in C}{\sprod{b}{x} \le h(C,b) - \eps }$  is rational, $d$-dimensional, and has a facet with outer normal $b$. Hence, $P$ is not centrally symmetric (because $F(P,b)$ is a facet of $P$ but $F(P,-b)$ is not) and each facet of $P$ has a normal vector in $\{e_1,\ldots,e_d,b\}$. Fix $k\in\N$ such that the polytope $k P$ is integral. Let~$S:= (kP) \cap \bL$. Since~$P$ is not centrally symmetric, $S$, too, is not centrally symmetric. Applying Theorem~\ref{thm:ABC}, we conclude that $S \oplus T$ and $S \oplus (-T)$ are $\bM$-convex. Since neither $S$ nor $T$ is centrally symmetric, by Proposition~\ref{prp:dir:sum:hom}\,\myeqref{S+T,S+(-T):nontr:hom:metr}, the sets $S \oplus T$ and~$S \oplus (-T)$ form a nontrivially homometric pair. 
\end{proof}

For the proof of Theorem~\ref{finiteness-thm} we recall some known results from the geometry of numbers.
The following theorem is contained in~\cite[Theorem~2 of \S10]{MR893813}.\footnote{In Theorem~\ref{finiteness-thm} we formulate only a part of \cite[Theorem~2 of \S10]{MR893813}. In contrast to \cite{MR893813}, we do not use the notion of successive minima explicitly.}

\begin{theorem}
	\label{3/2-thm}
	Let~$d\ge 2$ and let $K\subseteq\R^d$ be $d$-dimensional, $o$-symmetric, convex, and compact. Let~$\bL$ be a lattice of rank $d$ in $\R^d$. Assume that $K$ contains $d$ linearly independent vectors of $\bL$. Then $(3/2)^{d-2}K$ contains a basis of $\bL$. 
\end{theorem}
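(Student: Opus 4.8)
I would prove this by induction on $d$, constructing a basis of $\bL$ all of whose vectors lie in $(3/2)^{d-2}K$. Throughout, write $\norm{x}_K:=\min\setcond{\rho\ge 0}{x\in\rho K}$ for the gauge of $K$; since $K$ is symmetric, convex, compact and $d$-dimensional, this is a norm and $x\in\rho K\iff\norm{x}_K\le\rho$. The hypothesis is exactly that $\bL$ contains linearly independent vectors $v_1,\dots,v_d$ with $\norm{v_i}_K\le 1$. The reduction mechanism at every step is projection modulo a primitive lattice vector lying in $K$: after dividing out a common factor one may always pick a primitive $a_1\in\bL\cap K$, and I would project along the line $\R a_1$.

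For the base case $d=2$, where $(3/2)^{0}=1$, I must show that $K$ \emph{itself} contains a basis of $\bL$. With $a_1\in\bL\cap K$ primitive and $a_2\in\bL\cap K$ independent of $a_1$, the quotient $\bL/\Z a_1\cong\Z$ is rank one; let $m\ge 1$ be the index with which $a_2$ maps to $m$ times a generator. If $m=1$ then $a_1,a_2$ is already a basis inside $K$. If $m\ge 2$, the crucial move is to \emph{divide by $m$}: a generator of the quotient has a lattice lift of the form $b_2=\tfrac1m a_2+c\,a_1$ with $\abs{c}\le\tfrac12$ (reduce the $a_1$-coefficient into $[-\tfrac12,\tfrac12]$), whence $\norm{b_2}_K\le\tfrac1m\norm{a_2}_K+\tfrac12\norm{a_1}_K\le\tfrac12+\tfrac12=1$, and $a_1,b_2$ is a basis of $\bL$ contained in $K$.

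For the inductive step $d\ge 3$, let $\pi\colon\R^d\to\R^d/\R a_1\cong\R^{d-1}$ be the projection. Then $\overline{\bL}:=\pi(\bL)$ is a lattice of rank $d-1$ (by primitivity of $a_1$) and $\overline{K}:=\pi(K)$ is symmetric, convex, compact and $(d-1)$-dimensional. The images $\pi(v_1),\dots,\pi(v_d)$ span $\R^{d-1}$ and lie in $\overline K$, so $\overline{\bL}$ contains $d-1$ independent vectors in $\overline K$, and the induction hypothesis yields a basis $\overline b_2,\dots,\overline b_d$ of $\overline{\bL}$ inside $(3/2)^{d-3}\overline K$. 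For each $i$ I would pick $\tilde b_i\in(3/2)^{d-3}K$ with $\pi(\tilde b_i)=\overline b_i$ (possible since $\overline b_i\in(3/2)^{d-3}\overline K=\pi\bigl((3/2)^{d-3}K\bigr)$) and then take the lattice lift $b_i\in\bL$ of $\overline b_i$ that is closest to $\tilde b_i$ along $a_1$, so $b_i=\tilde b_i+\theta_i a_1$ with $\abs{\theta_i}\le\tfrac12$. This single rounding gives $\norm{b_i}_K\le(3/2)^{d-3}+\tfrac12\norm{a_1}_K\le(3/2)^{d-3}+\tfrac12$, and because $d\ge 3$ forces $(3/2)^{d-3}\ge 1$ one has $(3/2)^{d-3}+\tfrac12\le\tfrac32(3/2)^{d-3}=(3/2)^{d-2}$; hence $b_i\in(3/2)^{d-2}K$, while $a_1\in K\subseteq(3/2)^{d-2}K$. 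Finally $a_1,b_2,\dots,b_d$ is a basis of $\bL$ since $\overline b_2,\dots,\overline b_d$ generate $\bL/\Z a_1$ and $a_1$ generates $\Z a_1$.

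The delicate point is the base case, not the induction. If one applied the generic lifting-by-rounding of the inductive step already in dimension two, one would only obtain $\norm{b_2}_K\le 1+\tfrac12=\tfrac32$, which is too weak; it is precisely the division by the index $m$ — legitimate because we may replace $a_2$ by a generator of the rank-one quotient — that sharpens the factor to $1$ and thereby produces the exponent $d-2$ rather than $d-1$. The only arithmetic input thereafter is the inequality $(3/2)^{d-3}+\tfrac12\le(3/2)^{d-2}$, valid exactly for $d\ge 3$, which is why the statement is phrased for $d\ge 2$ with the base case carrying the factor $(3/2)^0=1$. (One could equivalently run the whole argument through the successive minima $\lambda_1\le\dots\le\lambda_d$, which here are all $\le 1$, but this is not needed.)
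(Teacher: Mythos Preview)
Your argument is correct. Note, however, that the paper does not supply its own proof of this statement: it is quoted from \cite[Theorem~2 of \S10]{MR893813} and used as a black box. Your induction on $d$---pick a primitive $a_1\in\bL\cap K$, project along $\R a_1$, apply the hypothesis in dimension $d-1$, and lift each basis vector of $\pi(\bL)$ back with a rounding of size at most $\tfrac12$ along $a_1$---is the classical proof of this fact, and your handling of the base case $d=2$ (dividing $a_2$ by the index $m\ge 2$ \emph{before} rounding, so that $\tfrac1m+\tfrac12\le 1$) is precisely the sharpening that yields the exponent $d-2$ rather than $d-1$.
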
 

Part~\myeqref{minkowski} of the following theorem is the famous first fundamental theorem of Minkowski; see \cite[Theorem~1 of \S5]{MR893813}. Part~\myeqref{d!} follows directly from \cite[Theorem~5 of \S14]{MR893813}, while part~\myeqref{basis-in-polar} is a straightforward consequence of part~\myeqref{d!} and Theorem~\ref{3/2-thm}.

\begin{theorem}
	\label{o-sym-tools-thm}
	Let~$d \ge 2$ and let $K\subseteq\R^d$ be $d$-dimensional, $o$-symmetric, compact, and convex. Let~$\bL$ be a lattice of rank $d$ in $\R^d$. If $\intr(K) \cap \bL = \{o\}$, the following statements hold: 
	\begin{enumerate}[(a)]
		\item \label{minkowski} $\vol(K) \le 2^d \det(\bL)$. 
		\item \label{d!} $(d!)^2 K^\circ$ contains $d$ linearly independent vectors of $\bL^\ast$. 
		\item \label{basis-in-polar} $(3/2)^{d-2} (d!)^2 K^\circ$ contains a basis of the lattice $\bL^\ast$.
	\end{enumerate}
\end{theorem}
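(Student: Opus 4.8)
The plan is to treat \myeqref{minkowski}, \myeqref{d!} and \myeqref{basis-in-polar} as successive applications of three classical facts from the geometry of numbers, the only genuine work being the translation of the hypothesis and of the conclusions into the language of successive minima and polarity. Throughout I write $\lambda_i(C,\Lambda)$ for the $i$-th successive minimum of an $o$-symmetric convex body $C$ with respect to a lattice $\Lambda$, that is, the least $\lambda>0$ for which $\lambda C$ contains $i$ linearly independent vectors of $\Lambda$. First I would record two preliminary observations. Since $K$ is $d$-dimensional, $o$-symmetric and convex, the origin lies in $\intr(K)$ (pick $x\in\intr(K)$; then $-x\in\intr(K)$ by symmetry and $o=\tfrac12 x+\tfrac12(-x)\in\intr(K)$ by convexity); consequently $K^\circ$ is again $d$-dimensional, $o$-symmetric, compact and convex with $o\in\intr(K^\circ)$, so all three statements concern bodies to which the cited theorems apply. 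Moreover, because $o\in\intr(K)$ we have $\lambda K\subseteq\intr(K)$ for every $\lambda\in(0,1)$, whence the hypothesis $\intr(K)\cap\bL=\{o\}$ is exactly the statement $\lambda_1(K,\bL)\ge 1$, and in particular $\lambda_i(K,\bL)\ge 1$ for every $i$.

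For \myeqref{minkowski} I would apply Minkowski's first fundamental theorem \cite[Theorem~1 of \S5]{MR893813} in contrapositive form: if one had $\vol(K)>2^d\det(\bL)$, the theorem would force $\intr(K)$ to contain a nonzero point of $\bL$, contradicting $\intr(K)\cap\bL=\{o\}$. Hence $\vol(K)\le 2^d\det(\bL)$.

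For \myeqref{d!} the first step is to rephrase the assertion: by the definition of the successive minima, the statement that $(d!)^2 K^\circ$ contains $d$ linearly independent vectors of $\bL^\ast$ is equivalent to $\lambda_d(K^\circ,\bL^\ast)\le (d!)^2$. I would then invoke the transference theorem \cite[Theorem~5 of \S14]{MR893813}, which bounds the successive minima of the polar body $K^\circ$ with respect to the dual lattice $\bL^\ast$ in terms of those of $K$ with respect to $\bL$. Feeding in the lower bound $\lambda_1(K,\bL)\ge 1$ obtained above, this yields $\lambda_d(K^\circ,\bL^\ast)\le (d!)^2$, which is precisely the desired conclusion. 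The one point that demands care — and the only real obstacle — is the bookkeeping of the numerical constant: the transference inequality must be used in the correct index pairing (the first minimum of $K$ against the last minimum of $K^\circ$) and with the normalisation of polarity adopted here, so as to land exactly at the factor $(d!)^2$ rather than at a smaller or larger power of $d!$.

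Finally, \myeqref{basis-in-polar} is immediate once \myeqref{d!} is available. I would set $K':=(d!)^2 K^\circ$; by the preliminary observation $K'$ is $d$-dimensional, $o$-symmetric, compact and convex, and by \myeqref{d!} it contains $d$ linearly independent vectors of the lattice $\bL^\ast$. Applying Theorem~\ref{3/2-thm} to the body $K'$ and the lattice $\bL^\ast$ then shows that $(3/2)^{d-2}K'=(3/2)^{d-2}(d!)^2 K^\circ$ contains a basis of $\bL^\ast$, which is exactly the claim. Thus \myeqref{minkowski} and \myeqref{basis-in-polar} amount to a direct citation together with the two translations above, and the entire substance of the proof is concentrated in the constant-tracking for \myeqref{d!}.
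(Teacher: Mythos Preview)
Your proposal is correct and follows essentially the same approach as the paper: the paper merely states that \myeqref{minkowski} is Minkowski's first theorem, that \myeqref{d!} follows directly from \cite[Theorem~5 of \S14]{MR893813}, and that \myeqref{basis-in-polar} is a straightforward consequence of \myeqref{d!} and Theorem~\ref{3/2-thm}. You have simply spelled out the translation into successive minima and the contrapositive application of Minkowski that the paper leaves implicit.
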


Note that, for $T \subseteq \R^d$ and a lattice $\bL$ of rank $d$, the set $W(T,\bL)$ from~\myeqref{eq:def:WKL} can be also given as 
\begin{align}\label{eq:equiv-char-WKL}
  W(T,\bL) = \bL^\ast \cap \intr (D(T)^\circ )\setminus\{o\}.
\end{align}
This follows  from the well-known equality $w(T,u)=h(D(T),u)$ and the straightforward equivalence $h(D(T),u)<1$ $\Leftrightarrow$ $u\in\intr (D(T)^\circ )$. See  Figure~\ref{fig:width:AveL12:strips} in the introduction for an illustration.
In view of this observation, the following lemma can be used to limit the possible shapes of $W(T,\bL)$. It will be employed both in this section and in the proof of Theorem~\ref{finiteness-thm}.

\begin{lemma} \label{thm:if-T-covers-and-not-flat} 
  Let~$(\bM,\bL,T) \in \cT^d$ and let $T$ be $d$-dimensional.
	Then $(2 \bL^\ast) \cap \intr( D(T)^\circ) = \{o\}$.
\end{lemma}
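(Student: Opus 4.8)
The plan is to argue by contradiction: suppose there is a nonzero $u \in (2\bL^\ast) \cap \intr(D(T)^\circ)$, so $u = 2v$ with $v \in \bL^\ast \setminus \{o\}$, and $h(D(T),u) < 1$, i.e.\ $w(T,u) < 1$, which gives $w(T,v) < 1/2$. First I would use the hypothesis $(\bM,\bL,T)\in\cT^d$, which means $\bM = \bL \oplus T$ with $\bM$ a lattice of rank $d$ and $T$ a finite $\bM$-convex tile. The key structural fact is that the translates $T + b$, $b \in \bL$, partition $\bM$; taking convex hulls, the translates $\conv(T) + b$ with $b \in \bL$ must cover $\R^d$ (since every lattice point of $\bM$ lies in exactly one translate $T+b \subseteq \conv(T)+b$, and $\bM$ is a full-rank lattice, the union $\bigcup_{b\in\bL}(\conv(T)+b)$ is a closed set containing $\bM$; a compactness/covering argument — or directly the Dirichlet-cell picture from the construction on page~\pageref{page:no-fixed-lattice-M} — shows it must be all of $\R^d$).

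The central step is then to derive a contradiction between this covering property and $w(T,v) < 1/2$. Since $v \in \bL^\ast$, for every $b \in \bL$ we have $\sprod{v}{b} \in \Z$. Fix any point $p_0 \in \conv(T)$ and consider its value $c_0 := \sprod{v}{p_0}$. For a point $y \in \conv(T) + b$ we have $\sprod{v}{y} \in \sprod{v}{\conv(T)} + \Z$, and the set $\sprod{v}{\conv(T)}$ is an interval of length $w(T,v) < 1/2$. Hence the set of values $\{\sprod{v}{y} : y \in \bigcup_{b\in\bL}(\conv(T)+b)\}$ is contained in a union of intervals of length $< 1/2$ centered (roughly) at the integer translates of a single interval — more precisely it is contained in $I + \Z$ where $I$ has length $<1/2$, so it misses, say, all reals whose fractional part relative to $I$ lies in the complementary gap of length $> 1/2$. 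This contradicts the fact that $\bigcup_{b\in\bL}(\conv(T)+b) = \R^d$, whose image under $\sprod{v}{\cdot}$ is all of $\R$. I would phrase the final contradiction cleanly: pick $\lambda \in \R$ not in $I + \Z$ (possible since $I+\Z \neq \R$ when $|I| < 1$), then the hyperplane $\{y : \sprod{v}{y} = \lambda\}$ meets no translate $\conv(T)+b$, contradicting that the translates cover $\R^d$.

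The main obstacle I anticipate is making the covering statement ``$\bigcup_{b\in\bL}(\conv(T)+b) = \R^d$'' rigorous from ``$(\bM,\bL,T)\in\cT^d$'' alone, since a~priori the definition only guarantees the lattice-point partition $\bM = \bL \oplus T$, not a covering of the continuum. One clean route: since $T$ is finite and $\bM$-convex, $T = \conv(T) \cap \bM$; the partition $\bM = \bigsqcup_{b\in\bL}(T+b)$ together with $\bM$ being a full-rank lattice forces the index $[\bM:\bL]$ to equal $\card{T}$, and a volume/index count (or invoking that a Dirichlet-type fundamental domain construction realizes the same $(\bM,\bL,T)$ up to the finite data) yields $\vol(\conv(T)) \ge \det(\bL)$ with the translates $\conv(T)+b$ having pairwise disjoint interiors only if they tile — but in fact we only need a covering, which follows because any point $x\in\R^d$ at bounded distance from $\bM$ lies within $\conv(T)$-diameter of some $T+b$. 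Actually the slickest argument avoids this entirely: note $w(T,v) < 1/2 < 1$ means $v \in W(T,\bL)$, and one can invoke that for $v\in\bL^\ast$ the strips orthogonal to $v$ through points of $\conv(T)+\bL$ fail to cover $\R^d$ (the geometric interpretation of $W(T,\bL)$ recorded just after~\myeqref{eq:def:WKL}) — but here the point is the opposite, so instead: the argument via the value set $\sprod{v}{\cdot}$ above only needs that $\conv(T)+\bL$ contains a transversal of $\bM$ in the $v$-direction, and since $\bM = \bL\oplus T$ surjects onto $\Z$ via $\sprod{v}{\cdot}$ composed with... — in short, I would isolate a short lemma ``$\conv(T)+\bL=\R^d$'' and treat its proof as the technical heart, everything else being the one-dimensional pigeonhole on $\sprod{v}{\cdot}$.
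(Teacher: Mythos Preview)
Your argument has a genuine gap: the claim $\conv(T)+\bL=\R^d$ is false in general, and so is the volume estimate $\vol(\conv(T))\ge\det(\bL)$ you propose as a route to it. Take $d=1$, $\bM=\Z$, $\bL=2\Z$, $T=\{0,1\}$. Then $(\bM,\bL,T)\in\cT^1$, $T$ is $1$-dimensional, yet $\conv(T)+\bL=\bigcup_{k\in\Z}[2k,2k+1]\ne\R$ and $\vol(\conv(T))=1<2=\det(\bL)$. The paper itself records (right after~\myeqref{eq:def:WKL}) that the hyperplanes orthogonal to vectors in $W(T,\bL)$ through $\conv(T)+\bL$ do \emph{not} cover $\R^d$, so your projection idea, as stated, cannot produce a contradiction: the image of $\conv(T)+\bL$ under $\sprod{v}{\cdot}$ \emph{is} $I+\Z$ with $|I|<1$, and nothing more is available.

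The paper's proof bypasses any continuum covering and works directly with a single well-chosen lattice point. After translating $T$ so that $o\in T$ and $h(T,u)=w(T,u)$, it takes $x\in T$ with $\sprod{x}{u}=w(T,u)$ and looks at $2x\in\bM$. Writing $2x=s+t$ with $s\in\bL$, $t\in T$ (possible since $\bM=\bL\oplus T$) and using that $\sprod{s}{u}\in\Z$, one gets either $\sprod{s}{u}\le 0$, forcing $2w(T,u)=\sprod{2x}{u}\le\sprod{t}{u}\le w(T,u)$ and hence $w(T,u)\le 0$ (contradicting $\dim T=d$), or $\sprod{s}{u}\ge 1$, forcing $2w(T,u)=\sprod{2x}{u}\ge 1$ (contradicting $2w(T,u)<1$). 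The doubling trick $x\mapsto 2x$ is exactly what makes the integrality of $\sprod{s}{u}$ bite against the bound $w(T,u)<\tfrac12$; your one-dimensional pigeonhole intuition is right, but it should be applied to this specific point of $\bM$, not to a covering of $\R^d$ that need not exist.
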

\begin{proof}
	Assume the contrary, that is, there exists a $u \in \bL^\ast\setminus\{o\}$ such that $2 u \in \intr( D(T)^\circ)$. Appropriately translating $T$ by a vector in $\bM$ we assume that $o \in T$ and $h(T,u)=w(T,u)$. Since $\dim(T)=d$  we have $w(T,u) > 0$. The assumption $2 u \in \intr(D(T)^\circ)$ means  $2 w(T,u) < 1$. We choose~$x \in F(T,u)$. By construction one has $0 \le \sprod{t}{u} \le w(T,u)$ for each $t \in T$ and $\sprod{x}{u}=w(T,u)$. In view of $\bM = \bL+T$ there exist $s \in \bL$ and $t \in T$ such that $2x = s + t$. If $\sprod{s}{u} \le 0$ we obtain
	\(2 w(T,u) = \sprod{2x}{u} = \sprod{s+t}{u} \le w(T,u),
	\)
	a contradiction to $w(T,u) > 0$. Otherwise $\sprod{s}{u} \ge 1$, and this yields
	\(1 > 2 w(T,u) = \sprod{2x}{u} = \sprod{s+t}{u} \ge 1,\)
	a contradiction. 
\end{proof}

Now we have gathered all tools to prove Theorem~\ref{finiteness-thm}.

\begin{proof}[Proof of Theorem~\ref{finiteness-thm}] The assertions are clear for $d=1$, so assume $d\geq 2$.

	\myeqref{item:finiteness-thm:III}: In view of Lemma~\ref{thm:if-T-covers-and-not-flat}, one has $\left( \frac{1}{2} \conv(W(T,\bL)) \right) \cap \bL^\ast = \{o\}$. By Theorem~\ref{o-sym-tools-thm}\,\myeqref{basis-in-polar}, the set $2 (3/2)^{d-2} (d!)^2 \conv(W(T,\bL))^\circ$ contains a basis $b_1,\ldots,b_d$ of $\bL$.  Let $b_1^\ast,\ldots,b_d^\ast$ be the dual basis of~$b_1,\ldots,b_d$. Polarization of the inclusion $2 (3/2)^{d-2} (d!)^2 \conv(W(T,\bL))^\circ \supseteq \{\pm b_1,\ldots,\pm b_d\}$ yields the inclusion $\conv(W(T,\bL)) \subseteq 2 (3/2)^{d-2} (d!)^2 \sum_{i=1}^{d}[-1,1]b_i^\ast$.

        \myeqref{item:finiteness-thm:I}: Since $o \not \in W(T,\bL)$ and $W(T,\bL) \cup \{o\} = \bL^\ast \cap \intr (D(T)^\circ)$, the inequality $\card{W(T,\bL)} < 4^d$ is equivalent to $\card{\bL^\ast \cap \intr (D(T)^\circ)  } \le 4^d$. We show this by contradiction. Assume that $\bL^\ast \cap \intr( D(T)^\circ)$ contains more than $4^d$ elements. Then this set contains two distinct elements $z_1$ and $z_2$ which coincide modulo $4 \bL^\ast$.  Let~$u : = \frac{1}{4} (z_1-z_2) \in \bL^\ast \setminus \{o\}$. In view of the central symmetry and convexity of $\intr (D(T)^\circ)$, we have $2 u = \frac{1}{2} z_1+ \frac{1}{2} (-z_2)\in W(T,\bL)$, which contradicts Lemma~\ref{thm:if-T-covers-and-not-flat}.

	\myeqref{item:finiteness-thm:II}: The polytope $\conv(W(T,\bL))$ is a proper subset of the polytope~$D(T)^\circ$, which gives the inequality $\vol(\conv(W(T,\bL))) < \vol(D(T)^\circ)$. Lemma~\ref{thm:if-T-covers-and-not-flat} and  Minkowski's first theorem (Theorem~\ref{o-sym-tools-thm}\,\myeqref{minkowski}) applied to the $d$-dimensional, $o$-symmetric polytope~$D(T)^\circ$ and the lattice~$2 \bL^\ast$ yield the inequality~$\vol(D(T)^\circ) \le 4^d \det(\bL^\ast)$. Finally, recall that $\det(\bL^\ast)=1 /\det(\bL)$, giving the equality~$4^d \det(\bL^\ast)=4^d /\det(\bL)$.
	\end{proof}
	
\begin{remark}
	The proof of the upper bound on $\card{W(T,\bL)}$ above is based on a version of the so-called parity argument; see, for example, \cite[p.~1613]{MR3106473} for a recent usage.
\end{remark}

\begin{remark}[Boundedness of $T$ relative to $\bL$]
  Theorem~\ref{finiteness-thm} gives a result on the finiteness of the possible shapes of $W(T,\bL)$, but not of $T$. In fact, we cannot have a result on finitely many shapes of $T$. This is seen from Example~\ref{AveL12:example}, in which~$\card{T}$ can be arbitrarily large. Nevertheless,  we have a `boundedness' assertion on $T$ as follows: In the situation of Theorem~\ref{finiteness-thm}, there are $d$ linearly independent vectors in $W(T,\bL)$ and thus in $D(T)^\circ$. Applying Theorem~\ref{3/2-thm} to $D(T)^\circ$, we conclude that $(3/2)^{d-2} D(T)^\circ$ contains a basis of the lattice~$\bL^\ast$. After appropriately changing coordinates in $\R^d$, this basis is $e_1,\ldots,e_d$ and so $\bL^\ast = \bL = \Z^d$. This yields $w(T,e_i) \le (3/2)^{d-2}$ for every $i \in \{1,\ldots,d\}$. Thus, after the mentioned change of coordinates, $\bL=\Z^d$ and $T$ is contained in a translation of the box $[0,(3/2)^{d-2}]^d$, whose size depends only on $d$.
\end{remark}

\goodbreak
\section{Proof of Theorem~\ref{thm:complete-classification}}\label{sec:proof:classification}

The proof of Theorem~\ref{thm:complete-classification}, in particular the proof of implication \myeqref{thm-item:complete-classification:ST-give-NontrHomPair}\,$\Rightarrow$\,\myeqref{thm-item:complete-classification:T-has-width-1}, will need some preparations. We sketch the two main steps before we proceed: Our first goal is Lemma~\ref{key-to-enumeration} which implies that if $(\bM,\bL,T)\in\cT^2$ allows for nontrivially homometric pairs of lattice-convex sets, then $T$ can be `cut out' by a Dirichlet cell and its lattice width with respect to $\bM$ is necessarily $1$, $2$, or $3$. The second key result is Lemma~\ref{lem-enum-T-given-bL}, which shows that, up to translations, such $T$ is contained in a finite list of sets that can be explored via a computer search.

We start our preparations with the following lemma, in which we establish some basic conditions on $S$, $T$ and $W(T,\bL)$, the condition~\myeqref{UT:three:dir} on $W(T,\bL)$ being the most important one. 

\begin{lemma} \label{lem:three-directions:in:sU_T}
	Let~$d=2$ and $(\bM,\bL,T) \in \cT^d$. Let~$S \subseteq \bL$ be finite and such that $S \oplus T$ and $S \oplus (-T)$ form a pair of nontrivially homometric $\bM$-convex sets. Then the following conditions hold:
	\begin{enumerate}[(a)]
		\item\label{S:bL-conv:two-dim} $S$ is $\bL$-convex and two-dimensional.
		\item\label{T:two-dim}  $T$ is two-dimensional.
		\item\label{UT:three:dir} $W(T,\bL)$ contains three pairwise nonparallel vectors. 
	\end{enumerate}
\end{lemma}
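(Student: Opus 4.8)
\textbf{Proof plan for Lemma~\ref{lem:three-directions:in:sU_T}.}

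The plan is to derive \myeqref{S:bL-conv:two-dim}, \myeqref{T:two-dim} and \myeqref{UT:three:dir} by combining Proposition~\ref{prp:dir:sum:hom}, Lemma~\ref{lem:S+T:convex=>S:convex} and the ``necessary'' implication \myeqref{item:ABC:B}\,$\Rightarrow$\,\myeqref{item:ABC:C} of Theorem~\ref{thm:ABC}, specialized to $d=2$. First I would record the easy parts. Since $S\oplus T$ is $\bM$-convex, Lemma~\ref{lem:S+T:convex=>S:convex} immediately gives that $S$ is $\bL$-convex, which is half of~\myeqref{S:bL-conv:two-dim}. The two-dimensionality of $S$ and of $T$ should be forced by the nontriviality of the homometric pair: by Proposition~\ref{prp:dir:sum:hom}\,\myeqref{S+T,S+(-T):nontr:hom:metr}, neither $S$ nor $T$ is centrally symmetric; but in $\R^2$ a set of dimension $\le 1$ (empty, a point, or a collinear $\bL$-convex set, which is a segment's worth of consecutive lattice points) is always centrally symmetric, so $\dim(S)=\dim(T)=2$. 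This settles \myeqref{S:bL-conv:two-dim} and \myeqref{T:two-dim}; I should state the one-dimensional central-symmetry fact explicitly since it is the crux of these two parts. (Here $\bM=\bL\oplus T$ with $\dim T<2$ would actually be impossible for a rank-$2$ lattice when $\dim T=0$, and when $\dim T=1$ the tile $T$ is a finite set of collinear equally spaced points, hence centrally symmetric — this is worth spelling out.)

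For \myeqref{UT:three:dir}, now that we know $S$ is $\bL$-convex and $2$-dimensional and $S\oplus T$ is $\bM$-convex, Theorem~\ref{thm:ABC}\,\myeqref{item:ABC:B}\,$\Rightarrow$\,\myeqref{item:ABC:C} applies. For $d=2$, every facet (edge) $F$ of the polygon $\conv(S)$ satisfies $\aff(F)\subseteq F+\bL$: $\aff(F)$ is a line through two lattice points of $\bL$, hence a coset-line of a rank-$1$ sublattice of $\bL$ contained entirely in $F+\bL$ — I would give this one-line justification. Therefore \emph{every} edge of $\conv(S)$ has an outer normal in $W(T,\bL)$; equivalently $U(S,\bL)\subseteq W(T,\bL)\cup(-W(T,\bL))$, and since $W(T,\bL)$ is symmetric we get $U(S,\bL)\subseteq W(T,\bL)$. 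The remaining point is that the polygon $\conv(S)$, being $2$-dimensional and \emph{not} centrally symmetric, must have at least three pairwise nonparallel edge directions: a convex polygon whose edge normals come in only two (or fewer) parallel classes is a parallelogram (or a segment/point), which is centrally symmetric. Hence $U(S,\bL)$, and therefore $W(T,\bL)$, contains three pairwise nonparallel vectors.

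The only genuine subtlety — the step I would treat most carefully — is the edge-lattice condition $\aff(F)\subseteq F+\bL$ in dimension two and the passage from ``normal vector in $W(T,\bL)$'' (for a possibly non-primitive normal) to membership of a primitive normal in $W(T,\bL)$; but in $\R^2$ this is routine, because the affine hull of an edge of an $\bL$-convex polygon is spanned by two points of $\bL$, and $W(T,\bL)$ is defined via the width function $w(T,\cdot)$, which is positively homogeneous, so $w(T,u)<1$ for the primitive normal follows once it holds for some positive multiple. Everything else is bookkeeping: the main conceptual input is the repeated use of ``low-dimensional or two-direction $\Rightarrow$ centrally symmetric,'' played against Proposition~\ref{prp:dir:sum:hom}\,\myeqref{S+T,S+(-T):nontr:hom:metr}.
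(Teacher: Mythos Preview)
Your proposal is correct and follows essentially the same route as the paper: $\bL$-convexity of $S$ via Lemma~\ref{lem:S+T:convex=>S:convex}, two-dimensionality of $S$ and $T$ from Proposition~\ref{prp:dir:sum:hom}\,\myeqref{S+T,S+(-T):nontr:hom:metr} (low-dimensional lattice-convex sets are centrally symmetric), and then Theorem~\ref{thm:ABC}\,\myeqref{item:ABC:B}\,$\Rightarrow$\,\myeqref{item:ABC:C} together with the observation that a polygon with edge normals in at most two parallel classes is a parallelogram. One small inaccuracy in your parenthetical aside: the case $\dim T=0$ is \emph{not} impossible for $(\bM,\bL,T)\in\cT^2$ (it occurs exactly when $\bL=\bM$ and $T$ is a single point), but this does not affect your argument since a singleton is centrally symmetric anyway.
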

\begin{proof}
	The $\bL$-convexity of $S$ follows from Lemma~\ref{lem:S+T:convex=>S:convex}. We have $\dim(S)=2$ for otherwise $S$ would remain unchanged under a point reflection which exchanges the endpoints of the (possibly degenerated) segment $\conv(S)$, so $S$ would be centrally symmetric. Hence by Proposition~\ref{prp:dir:sum:hom}\,\myeqref{S+T,S+(-T):nontr:hom:metr}, the pair $S \oplus T$, $S \oplus (-T)$ would be trivially homometric, a contradiction. So  $S$ is two-dimensional. The same arguments show that $T$ is two-dimensional, so we have established~\myeqref{S:bL-conv:two-dim} and~\myeqref{T:two-dim}. 

	Now observe that by Theorem~\ref{thm:ABC}, every edge of $\conv(S)$ has a normal vector belonging to~$W(T,\bL)$. We show~\myeqref{UT:three:dir} by contradiction, so assume that $W(T,\bL)$ does not contain three pairwise nonparallel vectors. Then $\conv(S)$ is a parallelogram and, by this, centrally symmetric. Since $S$ is $\bL$-convex, the latter implies that $S$ is centrally symmetric. By Proposition~\ref{prp:dir:sum:hom}\,\myeqref{S+T,S+(-T):nontr:hom:metr}, the sets $S \oplus T$ and $S \oplus (-T)$ are trivially homometric, which is a contradiction.
\end{proof}

Having established condition \myeqref{T:two-dim} on $T$ and condition \myeqref{UT:three:dir} on $W(T,\bL)$ in Lemma~\ref{lem:three-directions:in:sU_T}, the structure of a planar tiling $(\bM,\bL,T)$ generating nontrivially homometric pairs $S \oplus T$ and $S \oplus (-T)$ can be specified even more precisely. This is done in Lemma~\ref{key-to-enumeration} below, but first we need more auxiliary results, some of them relying on statements from the geometry of numbers specific to dimension two.

\begin{proposition} 
  \label{prp:o-sym:contains-a-bit-more-than-a-basis} 
  Let~$\bL$ be a lattice of rank two in $\R^2$. Let~$K$ be a two-dimensional, $o$-symmetric, and $\bL$-convex set such that there exists no basis $b_1,b_2$ of the lattice $\bL$ satisfying $[o,b_1] + [o,b_2] \subseteq \conv(K)$. Then
   $\conv(K) = \conv \left(\{ \pm k b_1, \pm b_2\}\right)$ for some basis $b_1,b_2$ of $\bL$ and some $k \in \N$.
\end{proposition}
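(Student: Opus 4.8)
The plan is to work with $\conv(K)$, which is a two-dimensional $o$-symmetric lattice polygon (with respect to $\bL$), and to use the classical fact that in the plane a lattice polygon whose only lattice points are its vertices and $o$ has a very restricted shape. First I would reduce to the case $\bL = \Z^2$ by a change of coordinates, as permitted by the remarks following Proposition~\ref{change:prp}; this is harmless since both hypothesis and conclusion are invariant under linear isomorphisms taking $\bL$ to $\Z^2$. Next, I would note that since $\conv(K)$ is $o$-symmetric and two-dimensional, it contains some primitive vector $b_1$ of $\bL$ in its interior or on its boundary; the hypothesis that no basis $b_1,b_2$ satisfies $[o,b_1]+[o,b_2]\subseteq\conv(K)$ will be used to show that $\conv(K)$ is `thin' in a suitable sense. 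Concretely, I expect to argue that $\conv(K)$ cannot contain two linearly independent primitive vectors $b_1, b_2$ with $[o,b_1]+[o,b_2]\subseteq\conv(K)$ and $\{b_1,b_2\}$ a basis — and then, using that every pair of linearly independent primitive vectors of a rank-two lattice can be completed via a unimodular change to a ``near-basis,'' deduce that all of $\conv(K)$ lies in a bounded strip around a single lattice line.

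The key step, I believe, is the following normalization: choose a primitive vector $b_1$ of $\bL$ such that $k b_1 \in \conv(K)$ with $k\in\N$ maximal (such a $b_1$ exists and $k\ge 1$ since $\conv(K)$ is a two-dimensional $\bL$-convex set and hence $\conv(K)\cap\bL\supsetneq\{o\}$, noting $\conv(K)$ is a polygon so $k$ is finite), and then choose $b_2$ to be a primitive vector of $\bL$ completing $b_1$ to a basis such that $h(\conv(K), b_2^\ast)$ is as small as possible, where $b_1^\ast, b_2^\ast$ is the dual basis. The hypothesis forbids $[o,b_1]+[o,b_2]\subseteq\conv(K)$ for any basis $b_1,b_2$, so in particular $b_2\notin\conv(K)$ (since $b_1\in\conv(K)$ and $\conv(K)$ is convex and contains $o$, having both $b_1,b_2$ in $\conv(K)$ would give $[o,b_1]+[o,b_2]\subseteq\conv(K)$ by convexity only if $b_1+b_2\in\conv(K)$ too — so one must be slightly more careful here). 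I would instead directly bound $w(\conv(K), b_1^\ast)$: the claim is that $\conv(K)$ meets only the two lattice lines $\{x : \sprod{b_1^\ast}{x} = 0\}$ and must be contained between $\sprod{b_1^\ast}{x} = -1$ and $\sprod{b_1^\ast}{x} = 1$, for otherwise $\conv(K)$ contains a lattice point $p$ with $\sprod{b_1^\ast}{p} = \pm 1$, and then $\{p, b_1\}$ (or a small modification) is a basis with $[o,p]+[o,b_1]\subseteq\conv(K)$ — contradicting the hypothesis. Granting $W := \conv(K) \subseteq \{ -1 \le \sprod{b_1^\ast}{x}\le 1\}$, the lattice points of $W$ on the line $\sprod{b_1^\ast}{x}=1$ form (by $\bL$-convexity and $o$-symmetry) a segment symmetric about $\tfrac12 b_1 + r b_2$ for some integer $r$; after the unimodular shear $b_2 \mapsto b_2 - r b_1$ (which fixes $b_1$) we may assume this segment is centered at $\tfrac12 b_1$, i.e.\ it is contained in $\{0, b_1\} + $ (line), and $o$-symmetry forces the corresponding segment on $\sprod{b_1^\ast}{x}=-1$ to be its reflection; since $kb_1\in W$ is the farthest lattice point on the central line, convexity then pins $W = \conv\{\pm k b_1, \pm b_2'\}$ where $b_2'$ is a primitive vector on $\sprod{b_1^\ast}{x}=1$, and $\{b_1, b_2'\}$ is a basis of $\bL$.

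The main obstacle I anticipate is the careful bookkeeping in the normalization step — specifically, ruling out that $\conv(K)$ contains a lattice point $p$ with $|\sprod{b_1^\ast}{p}|\ge 2$ or a `wide' segment on the lines $\sprod{b_1^\ast}{x}=\pm 1$. In each such case one must exhibit an honest basis $b_1, b_2$ of $\bL$ with the full parallelogram $[o,b_1]+[o,b_2]$ inside $\conv(K)$, and this requires using both the $o$-symmetry (to get the reflected segment for free) and the convexity of $\conv(K)$ (so that the parallelogram spanned by two boundary lattice points actually lies inside). I would handle this by a small case distinction on whether the `wide' lattice points lie on the central line $\sprod{b_1^\ast}{x}=0$ (handled by maximality of $k$) or off it (handled by exhibiting the parallelogram); once the polygon is confined to the strip $|\sprod{b_1^\ast}{x}|\le 1$ with the top segment reduced to a single primitive step, the identification $\conv(K) = \conv\{\pm k b_1, \pm b_2\}$ is immediate from convexity and symmetry.
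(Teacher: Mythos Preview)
Your plan can be made to work but it departs substantially from the paper's argument, and as written it has both a notational slip and a real gap.

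First, throughout the strip discussion you write $b_1^\ast$ where you need the dual vector vanishing on $\R b_1$; in the paper's convention $\sprod{b_1^\ast}{b_1}=1$, so $\{x:\sprod{b_1^\ast}{x}=0\}=\R b_2$, not $\R b_1$. The functional you want is $b_2^\ast$. The claim that the height-$1$ slice is ``symmetric about $\tfrac12 b_1+rb_2$ by $o$-symmetry'' is also off: $o$-symmetry sends height $1$ to height $-1$ and says nothing about the internal symmetry of a single slice.

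Second, the implication ``a lattice point $p$ at height $\pm1$ gives $[o,p]+[o,b_1]\subseteq\conv(K)$'' is false as stated: the parallelogram needs $p\pm b_1\in\conv(K)$ as well, and you acknowledge but do not resolve this. What must actually be shown is that if $K$ meets height $|y|\ge2$, then the line $y=1$ carries two \emph{consecutive} lattice points of $\conv(K)$, which together with $b_1$ span a forbidden parallelogram. This can be done --- the slice of $\conv\{\pm kb_1,(a,b)\}$ at $y=1$ has length $2k(b-1)/b$, and when $k=1$ one uses that maximality of $k$ forces $(a,b)$ primitive, hence $b\nmid a$, hence the slice still catches two consecutive integers --- but it is the substance of the proof, not bookkeeping.

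The paper sidesteps all of this by invoking Theorem~\ref{3/2-thm} at the outset: for $d=2$ it yields a basis of $\bL$ lying in $K$, which after a unimodular change is $e_1,e_2$. Then any $p\in K$ off both coordinate axes can be reflected into $\N^2$, and one line shows $[0,1]^2\subseteq\conv\{o,e_1,e_2,p\}\subseteq\conv(K)$, contradicting the hypothesis. So $K$ sits on the two axes; the observation $e_1+e_2\in[2e_1,2e_2]$ then shows $2e_1,2e_2$ cannot both lie in $K$, and the conclusion follows. Your maximal-$k$ route avoids the appeal to Theorem~\ref{3/2-thm}, which is a legitimate trade, but the step you flag as the ``main obstacle'' is exactly where the content you save must be repaid.
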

\begin{proof} After possibly changing coordinates we have $\bL=\Z^2$. By Theorem~\ref{3/2-thm} and since $d=2$, we can choose a basis $b_1, b_2$ of $\integer^2$ in $K$. Possibly applying some unimodular transformation to~$\bL$ (and thus to $K$) we assume $b_1 = e_1$ and  $b_2 = e_2$. We show $K \subseteq \Z\times\{0\}\cup \{0\} \times  \Z$ by contradiction. Assume that there exists a point $p \in K$ not belonging to $\Z \times \{0\} \cup \{0\} \times  \Z$. After possibly changing coordinates using only reflections with respect to coordinate axes, we have $p \in \N^2$. Then $[0,1]^2 \subseteq \conv(\{o,e_1,e_2,p\}) \subseteq \conv(K)$, which contradicts the assumptions. We thus have $K \subseteq  \Z \times \{0\} \cup \{0\} \times \Z$. Clearly, $2 e_1$ and $2 e_2$ cannot be both contained in $K$, for otherwise their convex combination $e_1 + e_2$ belongs to $K$ and we get $[0,1]^2 \subseteq \conv(K)$, which contradicts the assumption. Possibly interchanging the roles of $e_1$ and $e_2$, we have $2 e_2 \not\in K$. Then~$K \subseteq \Z \times \{0\} \cup \{0\} \times \{-1,0,1\}$ and the assertion is established.
\end{proof}

\newcommand{\flt}{\operatorname{Flt}}

A $d$-dimensional, compact, and convex subset of $\R^d$ is called a \emph{convex body} in $\R^d$. For a lattice $\bL$ of rank $d$, one can consider the so-called \emph{flatness constant}:
\[
	\flt(d):= \sup \setcond{w(K,\bL)}{K \ \text{is a convex body in} \ \R^d \ \text{and} \ \intr(K) \cap \bL = \emptyset}.
\]
Clearly, $\flt(d)$ is independent on the choice of $\bL$. It is known that $\flt(d)$ is finite for every $d \in \N$. Results providing upper bounds on $\flt(d)$ are called \emph{flatness theorems}. In dimension $d=2$ the flatness constant is known exactly.\footnote{Currently, $d=2$ is the only dimension, in which the flatness constant is known exactly.}

\begin{theorem}[Exact flatness theorem in dimension two; \cite{0708.52002}] \label{flatness-thm}
	$\flt(2) = 1 + 2 / \sqrt{3}$.
\end{theorem}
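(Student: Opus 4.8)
The plan is to prove the two inequalities $\flt(2)\le 1+2/\sqrt{3}$ and $\flt(2)\ge 1+2/\sqrt{3}$ separately. Since $\flt(2)$ does not depend on the choice of the rank-$2$ lattice, I fix $\bL=\Z^2$ throughout and work with lattice-free convex bodies, i.e.\ convex bodies $K$ with $\intr(K)\cap\Z^2=\emptyset$.

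For the lower bound it suffices to exhibit one lattice-free convex body $K\subseteq\R^2$ with $w(K,\Z^2)=1+2/\sqrt{3}$. The extremal body is a triangle: after a suitable unimodular transformation one takes $K$ to be a triangle placed symmetrically with respect to three families of `thin' lattice strips, so that $w(K,u)=1+2/\sqrt{3}$ holds for three pairwise nonparallel primitive vectors $u\in\Z^2$. To see that $1+2/\sqrt{3}$ is indeed the lattice width of $K$, recall the identity $w(K,u)=h(D(K),u)$: the width function is the support function of the centrally symmetric hexagon $D(K)=K-K$, so $w(K,u)<1+2/\sqrt{3}$ forces $u\in\intr\bigl((1+2/\sqrt{3})\,D(K)^\circ\bigr)$, a bounded region containing only finitely many primitive lattice vectors, each of which is inspected directly; and $\intr(K)\cap\Z^2=\emptyset$ is an elementary verification. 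This gives $w(K,\Z^2)=1+2/\sqrt{3}$, hence $\flt(2)\ge 1+2/\sqrt{3}$.

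For the upper bound I would first reduce to \emph{maximal} lattice-free bodies: every lattice-free convex body $K$ is contained in an inclusion-maximal lattice-free convex body $\widetilde{K}$, and $w(\widetilde{K},\Z^2)\ge w(K,\Z^2)$ since the width function is monotone under inclusion. A direct argument (using that $\widetilde{K}$ has empty interior relative to $\Z^2$ and is inclusion-maximal) shows that a bounded maximal lattice-free convex body in $\R^2$ is a polygon each of whose edges carries a point of $\Z^2$ in its relative interior, and that such a polygon has at most four edges, i.e.\ is a triangle or a quadrilateral; the only unbounded maximal lattice-free bodies are the strips between two consecutive parallel lattice hyperplanes, for which $w(\widetilde{K},\Z^2)=1$. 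In the quadrilateral case a short computation in suitable coordinates shows $w(\widetilde{K},\Z^2)\le 2<1+2/\sqrt{3}$.

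The crux is the triangle case. One parametrizes a maximal lattice-free triangle $\Delta$ by the three lattice points in the relative interiors of its edges together with the ratios in which they divide the edges; modulo a unimodular transformation and a translation this is a bounded number of real parameters, and one maximizes $w(\Delta,\Z^2)=\min_{u\in\Z^2\setminus\{o\}}w(\Delta,u)$ over this family. The main obstacle is that the minimizing direction need not be a normal to an edge of $\Delta$, so it does not suffice to minimize over the three edge normals; one must show — either by an exchange/perturbation argument at the optimum, or by bounding $w(\Delta,u)$ for the finitely many candidates $u\in\intr\bigl((1+2/\sqrt{3})\,D(\Delta)^\circ\bigr)$ exactly as in the lower-bound step — that at the maximum the three `tight' directions form a specific triple of primitive lattice vectors, and then solve the resulting finite-dimensional optimization. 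The constant $2/\sqrt{3}$ emerges from this optimal, balanced configuration, and one reads off $w(\Delta,\Z^2)\le 1+2/\sqrt{3}$, with equality precisely for the triangle of the first part. Combining the two inequalities yields $\flt(2)=1+2/\sqrt{3}$.
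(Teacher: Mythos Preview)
The paper does not prove this theorem; it is quoted from the literature (Hurkens, \cite{0708.52002}) and used as a black box in the proof of Lemma~\ref{key-to-enumeration}. So there is no ``paper's own proof'' to compare against.

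Your outline is the standard route to Hurkens' result: exhibit the extremal lattice-free triangle for the lower bound, then reduce the upper bound to maximal lattice-free bodies and use their classification (strip, triangle, quadrilateral with a lattice point in the relative interior of every edge). That strategy is correct. But as written it is a sketch, not a proof: you never actually produce the extremal triangle (the vertices, and the verification that its interior misses $\Z^2$ and that its lattice width equals $1+2/\sqrt{3}$); the quadrilateral bound $w\le 2$ is asserted ``by a short computation in suitable coordinates'' but not carried out; and in the triangle case you correctly identify the real difficulty---that the minimizing direction need not be an edge normal---yet only describe in words how one would handle it. Each of these gaps is where the genuine work of Hurkens' argument lies, so at present your text establishes the shape of the proof without supplying its substance.
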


\begin{theorem}[\cite{arxiv:1003.4365}] \label{thm:area-vs-width-ineq}
  Let~$\bM$ be a lattice of rank two in $\R^2$ and let $K$ be a two-dimensional compact convex set $K$ satisfying $\intr(K)
  \cap \bM = \emptyset$ and $1< w(K,\bM) \le 2$. Then 
  \begin{align*}
    \label{area-vs-width-ineq}
    \frac{\vol(K)}{\det(\bM)} \le \frac{w(K,\bM)^2}{2(w(K,\bM)-1)}.
  \end{align*}
\end{theorem}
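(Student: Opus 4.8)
The plan is to normalize and then slice. Applying an invertible linear transformation (which scales $\vol(K)$ and $\det(\bM)$ by the same factor, preserves $\intr(K)\cap\bM=\emptyset$, and preserves the lattice width; cf.\ Proposition~\ref{change:prp}) I may assume $\bM=\Z^2$, so it suffices to prove $\vol(K)\le w^2/(2(w-1))$ with $w:=w(K,\Z^2)\in(1,2]$. If $\dim K\le 1$ then $\vol(K)=0$ and there is nothing to prove, so assume $K$ is two-dimensional. Choosing a primitive $u\in\Z^2$ with $w(K,u)=w$ and applying a unimodular map sending $u$ to $e_2$, I may assume the lattice width is attained in the direction $e_2$; translating $K$ by a suitable vector of $\Z^2$ then gives $K\subseteq\{x:\,c\le x_2\le c+w\}$ with $c\in[0,1)$.

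Next I would pass to the slice-length function $f(t):=\card{K\cap\{x_2=t\}}$ on $[c,c+w]$ (length of the segment), which is concave and nonnegative with $\vol(K)=\int_c^{c+w}f$. The key consequence of $\intr(K)\cap\Z^2=\emptyset$: for every integer $n$ with $c<n<c+w$ the relative interior of $K\cap\{x_2=n\}$ is an open segment of the line $x_2=n$ containing no lattice point (a lattice point in the relative interior of such an interior slice would lie in $\intr K$ since $K$ is two-dimensional), and an open interval of length $>1$ always contains an integer, so $f(n)\le 1$. Since $c\in[0,1)$ and $1<w\le 2$, the open interval $(c,c+w)$ contains exactly one integer, namely $n=1$ (this is the case $c+w\le 2$), or exactly two, namely $n=1$ and $n=2$ (the case $c+w>2$).

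The heart of the argument is that a concave function lies below each of its supporting lines. In the one-integer-line case, pick a supergradient $m$ of $f$ at $n$; then $f(t)\le f(n)+m(t-n)\le 1+m(t-n)$ for all $t\in[c,c+w]$, while $f\ge 0$ at the two ends forces $m\in[-1/q,\,1/p]$, where $p:=n-c$ and $q:=(c+w)-n$, with $p+q=w$. Integrating, $\vol(K)\le w+\tfrac{m}{2}(q^2-p^2)$, which is affine in $m$ and hence at most $\max(\tfrac{w^2}{2p},\tfrac{w^2}{2q})=\tfrac{w^2}{2\min(p,q)}$; and since $n$ is the \emph{only} integer strictly inside the strip we have $p\le 1$ and $q\le 1$ (else $0$ or $2$ would also lie in $(c,c+w)$), so $\min(p,q)=w-\max(p,q)\ge w-1$, giving exactly the claimed bound. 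In the two-integer-line case one uses the two supporting lines $L_1,L_2$ of $f$ at $n=1,2$: by concavity $f\le\min(L_1,L_2)$ on the whole strip, the slopes satisfy $m_1\ge m_2$ (the derivative of $f$ is nonincreasing), $L_1(1)=f(1)\le 1$, $L_2(2)=f(2)\le 1$, and $L_i\ge 0$ at the two ends, which is a system of linear constraints on $(m_1,m_2)$; hence $\vol(K)\le\int_c^{c+w}\min(L_1,L_2)$ is bounded by the maximum of that integral over the feasible region. A short optimization — the integrand is a concave function of $(m_1,m_2)$ and the maximum occurs at the corner $m_1=1/(1-c)$, $m_2=-1/(c+w-2)$, which is the slice function of a lattice-free triangle with vertices at three distinct heights — yields $\vol(K)\le w^2/(2(w-1))$ again.

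I expect the two-integer-line case to be the main obstacle. Unlike the one-line case, no \emph{single} supporting line of $f$, and none of the naive secant/trapezoid estimates on the three pieces $x_2\le 1$, $1\le x_2\le 2$, $x_2\ge 2$, is tight enough: such estimates ignore the global concavity of $f$, which couples the slopes of $f$ across the three pieces (e.g.\ if $f$ drops into $x_2=1$ at slope nearly $-1$, forcing a large lower piece, it must continue with slope $\le -1$ past $x_2=1$, forcing a small middle piece). One genuinely has to carry both supporting lines at once and run the resulting two-variable optimization. One also must dispose of the boundary cases $c=0$ and $c+w\in\Z$, where a strip boundary is itself a lattice line and imposes no slice constraint; these are exactly the sharp cases, realized by $\conv\{(0,0),(\tfrac{w}{w-1},0),(0,w)\}$, and can be handled by the same computation or a limiting argument. (An alternative I would keep in reserve is to first enlarge $K$ to a maximal lattice-free convex set — necessarily a triangle or quadrilateral in the plane, since $w>1$ excludes a strip — and reduce to a finite check, at the cost of importing that classification.)
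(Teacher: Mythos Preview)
The paper does not prove Theorem~\ref{thm:area-vs-width-ineq} at all: it is quoted verbatim from the external reference \texttt{arXiv:1003.4365} and used as a black box in the proof of Lemma~\ref{key-to-enumeration}. So there is no ``paper's own proof'' to compare against.

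Your slicing argument is sound and gives an independent proof. The normalization, the observation that the open segment $K\cap\{x_2=n\}$ must have length $\le 1$ for each integer $n$ strictly between $c$ and $c+w$, and the one-integer-line computation are all correct; in particular the chain
\[
\int_c^{c+w} f \;\le\; f(1)\cdot w+\tfrac{m}{2}(q^2-p^2)\;\le\;\frac{w^2}{2\min(p,q)}\;\le\;\frac{w^2}{2(w-1)}
\]
goes through exactly as you say.

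One point to tighten in the two-integer-line case: you write ``the integrand is a concave function of $(m_1,m_2)$ and the maximum occurs at the corner''. Concavity of $\Phi(m_1,m_2)=\int\min(L_1,L_2)$ does \emph{not} force the maximum to a vertex (that is the behaviour of convex functions). What concavity buys you is that the first-order (KKT) conditions at a boundary point are \emph{sufficient} for global optimality. At your corner $m_1=1/a$, $m_2=-1/b$ (with $a=1-c$, $b=c+w-2$) one computes $t^\ast-1=a/(w-1)$ and $t^\ast-2=-b/(w-1)$, hence
\[
\partial_{m_1}\Phi=\tfrac12\bigl((t^\ast-1)^2-a^2\bigr)\ge 0,\qquad
\partial_{m_2}\Phi=\tfrac12\bigl(b^2-(t^\ast-2)^2\bigr)\le 0,
\]
so the gradient lies in the outward normal cone of the feasible box at that corner, which together with concavity certifies it as the global maximizer. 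With this small correction your two-variable optimization is complete, and the value at the corner is indeed $w^2/(2(w-1))$. The boundary cases $c=0$ or $c+w\in\Z$ fall under the one-integer analysis (only one lattice line is strictly interior), so no separate limiting argument is needed.
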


The following lemma is the first key result that makes the computer enumeration possible. 

\begin{lemma} \label{key-to-enumeration}
	Let~$(\bM,\bL,T) \in \cT^2$, let $T$ be two-dimensional and let $W(T,\bL)$ contain three pairwise nonparallel vectors. Then for some basis $b_1^\ast, b_2^\ast$ of $\bL^\ast$, for the dual basis $b_1,b_2$ of $b_1^\ast,b_2^\ast$, and for the triangle $\Delta:= \conv(\{o,b_1,b_2\})$ the following statements hold:
  \begin{enumerate}[(a)]
    \item \label{thin-in-three-directions} $w(T,b_1^\ast)< 1$, $w(T,b_2^\ast) < 1$, and $w(T,b_1^\ast + b_2^\ast) < 1$.
    \item \label{T-is-cut-out-form-cell} There exists $v \in \R^2$ such that
    \begin{align*} 
      T = \bM \cap \bigl(v+ (0,1] b_1 + (0,1] b_2 \bigr).
    \end{align*}
    \item \label{width-of-Delta-and-T} $w(T,\bM) \le w(\Delta,\bM) -1$. 
    \item \label{width-of-Delta-and-finiteness} $w(\Delta,\bM) \in \{2,3,4\}$ and
    \begin{equation}
      \label{det-ratio-range}
      \frac{\det(\bL)}{\det(\bM)} \in
      \begin{cases}
        \{7,\ldots,18\} & \text{if} \ w(\Delta,\bM) =3, 
        \\
        \{12,\ldots,16\} & \text{if} \ w(\Delta,\bM) =4.
      \end{cases}
    \end{equation}
  \end{enumerate}
\end{lemma}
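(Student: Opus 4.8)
The plan is to leverage the structural results already at hand and squeeze the quantitative bounds out of the two-dimensional flatness and area-vs-width theorems. First I would set up coordinates: after a change of basis (Proposition~\ref{change:prp}) I may assume $\bL^\ast$ has a convenient basis. The set $W(T,\bL) = \bL^\ast \cap \intr(D(T)^\circ)\setminus\{o\}$ contains three pairwise nonparallel vectors, and by Lemma~\ref{thm:if-T-covers-and-not-flat} we have $(2\bL^\ast)\cap\intr(D(T)^\circ)=\{o\}$, i.e.\ $\tfrac12 W(T,\bL)$ meets $\bL^\ast$ only at $o$. Applying Theorem~\ref{3/2-thm} with $d=2$ (so the factor $(3/2)^{d-2}=1$) to the $o$-symmetric convex set $\conv(W(T,\bL))$ gives a basis $b_1^\ast,b_2^\ast$ of $\bL^\ast$ inside $\conv(W(T,\bL))$. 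The key combinatorial point for \myeqref{thin-in-three-directions} is that three pairwise nonparallel lattice points forming (part of) the boundary of an $o$-symmetric lattice polygon with no interior lattice point other than $o$ force, up to a unimodular change, that $b_1^\ast$, $b_2^\ast$ and $b_1^\ast+b_2^\ast$ all lie in $\intr(D(T)^\circ)$; I would argue this by a short case analysis on the primitive lattice points available (essentially the classification of empty $o$-symmetric lattice polygons in the plane, as packaged in Proposition~\ref{prp:o-sym:contains-a-bit-more-than-a-basis}), noting that at least one of the three given nonparallel directions must be $b_1^\ast+b_2^\ast$ (or $b_1^\ast-b_2^\ast$, which is handled by replacing $b_2^\ast$ with $-b_2^\ast$). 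Membership of these three vectors in $\intr(D(T)^\circ)$ is exactly $w(T,b_i^\ast)<1$ and $w(T,b_1^\ast+b_2^\ast)<1$.

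For \myeqref{T-is-cut-out-form-cell}: let $b_1,b_2$ be the dual basis of $b_1^\ast,b_2^\ast$, so $b_1,b_2$ is a basis of $\bL$ and $\Delta=\conv(\{o,b_1,b_2\})$. The conditions $w(T,b_1^\ast)<1$ and $w(T,b_2^\ast)<1$ say that $T$, which is $\bM$-convex, is contained between two consecutive lattice hyperplanes of $\bL^\ast$ in each of the directions $b_1^\ast$ and $b_2^\ast$; equivalently $\conv(T)$ fits, after a translation by a vector of $\bM$, inside a half-open parallelepiped $v+(0,1]b_1+(0,1]b_2$ whose $\bL$-translates tile $\R^2$. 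Since $(\bM,\bL,T)\in\cT^2$ means exactly that the $\bL$-translates of $T$ partition $\bM$, and $T\subseteq \bM\cap(v+(0,1]b_1+(0,1]b_2)$ together with the fact that $\bM\cap(v+(0,1]b_1+(0,1]b_2)$ is itself a set of coset representatives of $\bM/\bL$ forces equality $T=\bM\cap(v+(0,1]b_1+(0,1]b_2)$. (One has to check that the two strip conditions genuinely confine $\conv(T)$ to one fundamental cell; this uses that $w(T,b_i^\ast)<1$ and $b_i^\ast$ is primitive in $\bL^\ast$, so no two points of $T$ differ by a lattice vector in those directions.)

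For \myeqref{width-of-Delta-and-T} and \myeqref{width-of-Delta-and-finiteness}: from \myeqref{T-is-cut-out-form-cell}, $\conv(T)$ is a subset of a translate of the parallelogram $\conv(\{o,b_1,b_2,b_1+b_2\}) = \Delta \cup (b_1+b_2-\Delta)$, and the third thinness condition $w(T,b_1^\ast+b_2^\ast)<1$ shows $\conv(T)$ sits inside a substrip of width $<1$ in the direction $b_1^\ast+b_2^\ast$; the remaining part of the parallelogram, of $(b_1^\ast+b_2^\ast)$-width $w(\Delta,b_1^\ast+b_2^\ast)-w(T,b_1^\ast+b_2^\ast)$, contains no point of $T$. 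This, translated into $\bM$-lattice width via $w(\cdot,\bM)=\inf_{u\in\bM^\ast\setminus\{o\}}w(\cdot,u)$ and the observation that $b_1^\ast+b_2^\ast\in\bL^\ast$ is, after scaling, essentially the direction achieving $w(\Delta,\bM)$, yields $w(T,\bM)\le w(\Delta,\bM)-1$. Finally, $\Delta$ is a lattice triangle for $\bL$ with $\det$-ratio $\det(\bL)/\det(\bM)$ copies of a fundamental cell of $\bM$ in it; $\intr(T)\cap\bM$ can be assumed empty after translation (since $\dim T=2$ and $T$ is cut from a half-open cell, at most a boundary contribution), so $\intr(\Delta)$ — being covered by $T$ and the complementary strip — has bounded $\bM$-width, and the exact flatness theorem $\flt(2)=1+2/\sqrt3$ (Theorem~\ref{flatness-thm}) bounds $w(\Delta,\bM)\le 1+\flt(2)<4.16$, while $w(T,\bM)\ge 1$ (as $\dim T = 2$ and $T$ is $\bM$-convex, its $\bM$-width is a positive integer) forces $w(\Delta,\bM)\ge 2$; combined with \myeqref{width-of-Delta-and-T} this gives $w(\Delta,\bM)\in\{2,3,4\}$. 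The $\det$-ratio ranges come from pairing Theorem~\ref{thm:area-vs-width-ineq} applied to $\Delta$ (respectively to the complementary region) with $w(\Delta,\bM)\in\{3,4\}$: $\vol(\Delta)/\det(\bM)=\tfrac12\det(\bL)/\det(\bM)$ is bounded above by $w(\Delta,\bM)^2/(2(w(\Delta,\bM)-1))$ and below by a matching lower estimate (the complementary strip must still cover enough area and contain no $\bM$-point), yielding the stated integer intervals. The main obstacle I expect is the bookkeeping in \myeqref{width-of-Delta-and-finiteness}: getting the $\det$-ratio \emph{lower} bounds, rather than just upper bounds, requires carefully using that $T$ is nonempty and $\bM$-convex (so $\conv(T)$ has positive, in fact $\ge$ some explicit, $\bM$-normalized area) and that the fundamental-cell structure forbids $\Delta$ from being too "thin" relative to $\bM$; I would handle this by a direct estimate $\vol(\Delta) = \vol(\conv(T)) + \vol(\text{complement}) \ge \det(\bM)\cdot(\text{number of }\bM\text{-points of }T) + \ldots$ and the flatness-type lower bound on the area of the empty complementary region of given width.
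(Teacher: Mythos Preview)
Your treatment of \myeqref{thin-in-three-directions} and \myeqref{T-is-cut-out-form-cell} is essentially the paper's argument and is fine. The gaps are in \myeqref{width-of-Delta-and-T} and \myeqref{width-of-Delta-and-finiteness}.

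For \myeqref{width-of-Delta-and-T}: you try to deduce $w(T,\bM)\le w(\Delta,\bM)-1$ from the single direction $b_1^\ast+b_2^\ast$, asserting it ``is, after scaling, essentially the direction achieving $w(\Delta,\bM)$''. But $b_1^\ast+b_2^\ast\in\bL^\ast$ need not lie in $\bM^\ast$ (recall $\bM^\ast\subseteq\bL^\ast$, not the other way round), and there is no reason the minimizing direction for $w(\Delta,\bM)$ should be parallel to it. What the paper actually does is prove $w(T,u)\le w(\Delta,u)-1$ for \emph{every} $u\in\bM^\ast\setminus\{o\}$, and the device you are missing is the hexagon $H_\alpha=\{x\in[0,1]^2:\alpha\le x_1+x_2\le\alpha+1\}$: one checks $H_\alpha=(\alpha,\alpha)-\alpha\Delta+(1-\alpha)\Delta$, hence $D(H_\alpha)=D(\Delta)$, so $w(H_\alpha,u)=w(\Delta,u)$ for all $u$. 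Since an appropriate translate of $T$ lies in $\intr(H_\alpha)$, one gets the strict inequality in every direction, which upgrades to $-1$ by integrality on $\bM^\ast$.

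For \myeqref{width-of-Delta-and-finiteness}: you apply the flatness theorem and Theorem~\ref{thm:area-vs-width-ineq} directly to $\Delta$ (or to $T$). Both theorems require the interior to be free of $\bM$-points, and neither $\intr(\conv(T))$ nor $\intr(\Delta)$ is $\bM$-free in general --- $T$ is itself a set of $\bM$-points, and $\Delta$ contains translates of them. The correct target is the \emph{complement} of $H_\alpha$ in the unit cell, which by \myeqref{T-is-cut-out-form-cell} is $\bM$-interior-free: it consists of two triangles $\alpha\Delta$ and $(1,1)-(1-\alpha)\Delta$, and the larger of the two contains a translate of $K:=\tfrac12\Delta$. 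Applying flatness and Theorem~\ref{thm:area-vs-width-ineq} to $K$ (with $w(K,\bM)=\tfrac12 w(\Delta,\bM)$) gives the upper bounds. Your proposed lower bound for $\det(\bL)/\det(\bM)$ via a volume count on $\conv(T)$ does not work either; the paper instead uses Minkowski's first theorem for the lattice $\bM^\ast$ and the body $w(K,\bM)\,D(K)^\circ$, exploiting that $\intr\bigl(w(K,\bM)\,D(K)^\circ\bigr)\cap\bM^\ast=\{o\}$ by definition of $w(K,\bM)$, and a direct computation $\vol(D(K)^\circ)=12$.
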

\begin{proof}
  \myeqref{thin-in-three-directions}: There exists a basis $b_1^\ast,b_2^\ast$ of $\bL^\ast$ lying in $\intr (D(T)^\circ)$ such that $b_1^\ast+b_2^\ast$ is also in $\intr (D(T)^\circ)$. Indeed, if the latter was not the case, Proposition~\ref{prp:o-sym:contains-a-bit-more-than-a-basis} applied to the lattice $\bL^{\ast}$ and $K=\bL^\ast \cap \conv(\intr(D(T)^\circ))$ would yield that $W(T,\bL)$ does not contain three pairwise nonparallel vectors, which contradicts the assumption. Thus, $b_1^\ast, b_2^\ast, b_1^\ast + b_2^\ast \in W(T,\bL)$, which gives \myeqref{thin-in-three-directions}.

  \myeqref{T-is-cut-out-form-cell}: After possibly changing coordinates we have $b_1^\ast = e_1$ and $b_2^\ast = e_2$; thus $\bL^\ast = \Z^2$, $\bL=\Z^2$, $b_1= e_1$,  and $b_2=e_2$. The conditions $b_1^\ast, b_2^\ast,b_1^\ast+b_2^\ast\in\intr(D(T)^\circ)$ translates to $w(T,e_1)<1$, $w(T,e_2)<1$, and $w(T,e_1 + e_2)<1$. For $h_i := \max_{t\in T}\sprod{T}{e_i}$ with $i \in \{1,2\}$ and $h:= \max_{t\in T}\sprod{t}{e_i+e_2}$, this gives
  \begin{align}\label{eq:width123:2}
    T \subseteq \setcond{(x_1,x_2) \in (h_1-1,h_1] \times (h_2-1,h_2]}{ h - 1 <
    x_1 + x_2 \le h}.
  \end{align}
  In particular, we have $T \subseteq \bM\,\cap\,(h_1-1,h_1] \times (h_2-1,h_2]$, so $T\subseteq \bM\cap (v + (0,1]^2)=\bM\cap (v + (0,1]b_1+(0,1]b_{2})$ for $v:=(h_1-1,h_2-1)$. We show~\myeqref{T-is-cut-out-form-cell} by contradiction.  Assume that $x\in\bM$ belongs to $v+(0,1]^2$, but not to $T$. Each translation $z+T$ of $T$ with $z \in \Z^2 \setminus \{o\}$ is a subset of $z+ v + (0,1]^2$. The sets $v+(0,1]^2$ and $z+v+(0,1]^2$ are disjoint because $\R^2$ is the disjoint union of all integral translations of $(0,1]^2$. Consequently, $x\notin z+v+(0,1]^2$ and, by this, $x\notin z+T$. This shows that $x\notin\Z^2 + T = \bL + T=\bM$, which contradicts the assumption $x\in\bM$.

\myeqref{width-of-Delta-and-T}: We claim that $1> h_1 + h_2 - h\geq 0$. Indeed, the inequality $h \le h_1 + h_2$ is valid since for~$i \in \{1,2\}$ the value $h_i$ is the maximum of all $\sprod{t}{e_i}$ with $t \in T$, while $h$ is the maximum of all sums $\sprod{t}{e_1} + \sprod{t}{e_2}$ with $t \in  T$. For showing $h_1 + h_2 - 1<h$, we choose  $(x_1,h_2), (h_1,x_2) \in T$ with suitable $x_{1},x_{2}\in\R^2$. We have $x_i > h_i - 1$ for each $i \in \{1,2\}$ due to~\myeqref{eq:width123:2}. Together with the inequalities $x_1 + h_2 \le h$ and $h_1 + x_2 \le h$ we get $h \ge \frac{1}{2} ( x_1 + x_2 + h_1 + h_2) > h_1 + h_2 -1$.

A~translation of the left and the right hand side of \myeqref{eq:width123:2} yields the inclusion
  \begin{align*}
    T - v \subseteq \setcond{(x_1,x_2) \in (0,1]^2}{ 1 + h - h_1 - h_2
    < x_1 + x_2 \le 2 + h - h_1 - h_2}.
  \end{align*}
  The topological closure of the right hand side of the latter inclusion is the polygon
  \[
    H_\alpha : = \setcond{ (x_1,x_2) \in [0,1]^2}{ \alpha \le x_1 + x_2 \le \alpha + 1},
  \]
  where $\alpha := 1 + h - h_1 - h_2$ satisfies $0 < \alpha \le 1$. The polygon $H_\alpha$ is a hexagon for $0 < \alpha < 1$ and a triangle otherwise. With $\Delta = \conv(\{o,b_1,b_2\}) = \conv(\{o,e_1,e_2\})$ it is straightforward to check that $H_\alpha = (\alpha,\alpha) - \alpha \Delta + (1-\alpha) \Delta$. This implies $D(H_\alpha) = D(\Delta)$. Thus, $D(H_{\alpha})$ is independent of $\alpha$ and $w(H_\alpha,u) =\max_{x\in D(H_{\alpha})}\sprod{x}{u}=\max_{x\in D(\Delta)}\sprod{x}{u}= w(\Delta,u)$ for each $u \in \R^2$. Since an appropriate translation of $T$ is contained in $\intr (H_\alpha)$, we have $D(T) \subseteq D(\intr (H_\alpha)) = \intr (D(H_\alpha))$ and $w(\Delta,u) = w(H_\alpha, u) > w(T,u)$ for every $u \in \R^2 \setminus \{o\}$. Moreover, the vertices of $D(H_\alpha) = D(\Delta)$ and $D(T)$ lie in $\bM$. Hence the strict inequality $w(T,u) < w(\Delta, u)$ can be improved to $w(T,u) \le w(\Delta, u) - 1$ for each $u \in \bM^\ast \setminus \{o\}$, giving \myeqref{width-of-Delta-and-T}.

\myeqref{width-of-Delta-and-finiteness}: The points of $[0,1]^2$ not covered by $H_\alpha$ lie in the triangles $\alpha \Delta$ and $(1,1) - (1-\alpha) \Delta$, where the second triangle is degenerated to a point if $\alpha=1$. We show \myeqref{width-of-Delta-and-finiteness} by distinguishing two cases according to which of the two triangles $\alpha \Delta$ and $(1-\alpha) \Delta$ is larger. 

\emph{Case 1: $\alpha \ge \frac{1}{2}$.} The set $T$ is contained in the polytope $H_\alpha +v$, and so by \myeqref{T-is-cut-out-form-cell} the interior of $\alpha \Delta + v$ does not contain points of $\bM$. Consequently, the interior of the subset $K:=\frac{1}{2} \Delta$ of $\alpha \Delta + v$  does not contain points of $\bM$ either. Thus,  Theorem~\ref{flatness-thm} yields
\(  w(\Delta,\bM) = 2 w(K,\bM) \le \textstyle 2 \left(1+
  {2}/{\sqrt{3}} \right)
\).
Since the vertices of $\Delta$ belong to $\Z^2 = \bL \subseteq \bM$, we have $w(\Delta,\bM) \in \N$, which implies $w(\Delta,\bM) \le \floor{2 (1+ {2}/{\sqrt{3}})} = 4$. Furthermore, in view of~\myeqref{width-of-Delta-and-T} and $w(T,\bM) \in \N$, we also have $w(\Delta,\bM) \ge 2$. In fact, $w(\Delta,\bM)=1$ would imply $w(T,\bM)=0$, which contradicts the full-dimensionality of $T$. Thus, $w(\Delta,\bM) \in \{2,3,4\}$. 

Next we show the upper bounds on $\det(\bL)/ \det(\bM)$ contained in \myeqref{det-ratio-range}. We have $\det(\bL) = 1$ and $\vol(K) = 1/8$. Thus, $\det(\bL)/ \det(\bM) = 8 \vol(K) / \det(\bM)$. Assume $w(\Delta,\bM) \in \{3,4\}$. For bounding $\vol(K)/ \det(\bM)$, we can use Theorem~\ref{thm:area-vs-width-ineq} for $K$: Since $w(K,\bM)=\frac{1}{2}w(\Delta,\bM) \in \{3/2,2\}$, the set $K$ fulfills the assumptions of this theorem. We obtain
  \begin{align*}
    8 \frac{\vol(K)}{\det(\bM)} \le 8 \frac{w(K,\bM)^2}{2 (w(K,\bM) - 1)} 
= \frac{2 w(\Delta,\bM)^2}{w(\Delta,\bM) - 2} 
    = 
    \begin{cases}
      18 & \text{if} \ w(\Delta,\bM) = 3,
      \\ 16 & \text{if} \ w(\Delta,\bM) =4.
    \end{cases}
  \end{align*}
  This yields the desired upper bounds on $\det(\bL) / \det(\bM)$. 
  
  To conclude Case~1, it remains to show the lower bounds on $\det(\bL)/\det(\bM)$ from \myeqref{det-ratio-range}. We have $\det(\bL)/ \det(\bM) = 1/\det(\bM)= \det(\bM^\ast)$. For finding a lower bound on $\det(\bM^\ast)$ we use Minkowski's first theorem (Theorem~\ref{o-sym-tools-thm}\,\myeqref{minkowski}) for the lattice $\bM^\ast$. A direct computation shows $D(K)^\circ = 2 D(\Delta)^\circ = 2 \conv(\{\pm e_1,\pm e_2, \pm (e_1 + e_2) \})$ and, consequently, $\vol(D(K)^\circ) = 12$. Using standard facts about the width and the polarity, it follows that the interior of $w(K,\bM) D(K)^\circ$ consists of vectors $u \in \R^d$ satisfying $w(K,u) < w(K,\bM)$. Thus, by definition of $w(K,\bM)$, the interior of $w(K,\bM) D(K)^\circ$ does not contain nonzero vectors of $\bM^\ast$. So Minkowski's first theorem can be applied to $w(K,\bM) D(K)^\circ$; taking into account $\vol(D(K)^\circ) = 12$ we obtain $4 \det(\bM^\ast) \ge \vol(w(K,\bM) D(K)^\circ) = 12 w(K,\bM)^2$. In view of $\det(\bL)/\det(\bM) = \det(\bM^\ast)$, this gives
  \begin{align*}
    \frac{\det(\bL)}{\det(\bM)} \ge 3 w(K,\bM)^2 = \frac{3}{4} w(\Delta,\bM)^2  = \begin{cases}
      \frac{27}{4} & \text{if} \ w(\Delta,\bM) = 3,
      \\ 12 & \text{if} \ w(\Delta,\bM) =4.
    \end{cases}
  \end{align*}
  The lattice $\bL$ is a sublattice of $\bM$. It is well known that in this case $\det(\bL) / \det(\bM)$ is a natural number. Thus, the lower bound $27/4$ in the case $w(\Delta,\bM)=3$ can be rounded up to $7$. This yields the lower bounds on $\det(\bL) / \det(\bM)$ contained in \myeqref{det-ratio-range}.

  \emph{Case 2: $\alpha \le \frac{1}{2}$.} In this case completely analogous arguments can be applied to the triangle $(1,1) - (1-\alpha) \Delta$ instead of the triangle $\alpha \Delta$ to get \myeqref{width-of-Delta-and-finiteness}. 
\end{proof}

In view of Lemma~\ref{lem:three-directions:in:sU_T} and Lemma~\ref{key-to-enumeration}, we can prove Theorem~\ref{thm:complete-classification}\,\myeqref{thm-item:complete-classification:ST-give-NontrHomPair}\,$\Rightarrow$\,\myeqref{thm-item:complete-classification:T-has-width-1} by distinguishing the three cases $w(\Delta,\bM)=2$, $w(\Delta,\bM) = 3$ and $w(\Delta,\bM) =4$. We will see that in the case $w(\Delta,\bM)=2$ (which means $w(T,\bM)=1$), the assertion will follow from results in \cite{AveL12}. When $w(\Delta,\bM)  \in \{3,4\}$, we use the bounds on $\det(\bL)/\det(\bM)$ from Lemma~\ref{key-to-enumeration}\,\myeqref{width-of-Delta-and-finiteness}. These bounds and the following statement enable us to fix $\bM$ and to carry out an  computer-assisted enumeration of all possible lattices $\bL$ using Magma.

\begin{proposition} \label{finite:choice:bL}
	Let~$\bM$ be a lattice of rank $d$ in $\R^d$ and let $L \in \N$. Then there exist only finitely many rank $d$ sublattices $\bL$ of $\bM$ with $\det(\bL)/\det(\bM) = L$. 
\end{proposition}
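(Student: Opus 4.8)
The plan is to reduce the statement to a well-known finiteness fact about subgroups of finite index in a free abelian group, and to make this concrete via the theory of Hermite normal form. First I would fix a basis $a_1,\ldots,a_d$ of $\bM$, so that the coordinate isomorphism identifies $\bM$ with $\Z^d$; note that $\det(\bL)/\det(\bM)$ equals the index $[\bM:\bL]$ whenever $\bL$ is a rank $d$ sublattice of $\bM$, so the hypothesis $\det(\bL)/\det(\bM)=L$ says precisely that $\bL$ is a sublattice of $\bM$ of index $L$. Thus it suffices to show that a lattice of rank $d$ has only finitely many sublattices of any given finite index $L$.

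The key step is the following: after identifying $\bM$ with $\Z^d$, every rank $d$ sublattice $\bL$ of $\Z^d$ has a unique basis whose coordinate matrix $B\in\Z^{d\times d}$ is in Hermite normal form, i.e.\ $B$ is upper triangular with positive diagonal entries and each off-diagonal entry in a fixed row lying in $\{0,\ldots,b_{ii}-1\}$ where $b_{ii}$ is the corresponding diagonal entry. For such a matrix one has $\card{\det(B)}=b_{11}\cdots b_{dd}=[\Z^d:\bL]=L$. Hence the diagonal entries form a factorization of $L$ into $d$ positive integers, of which there are finitely many; and once the diagonal entries are fixed, each of the finitely many off-diagonal entries ranges over a finite set. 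Therefore the number of Hermite normal form matrices with determinant $L$ is finite, and since distinct sublattices give distinct Hermite normal forms, there are only finitely many sublattices $\bL$ of $\bM$ with $[\bM:\bL]=L$. Translating back, there are only finitely many rank $d$ sublattices $\bL$ of $\bM$ with $\det(\bL)/\det(\bM)=L$.

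Alternatively, and even more cheaply, one could argue: if $[\bM:\bL]=L$ then $L\bM\subseteq\bL\subseteq\bM$, so $\bL/L\bM$ is a subgroup of the finite group $\bM/L\bM\cong(\Z/L\Z)^d$; a finite group has only finitely many subgroups, and the correspondence $\bL\mapsto\bL/L\bM$ between sublattices containing $L\bM$ and subgroups of $\bM/L\bM$ is injective, which again yields the claim.

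I do not expect any serious obstacle here: the statement is standard, and the only point requiring a little care is the identity $\det(\bL)/\det(\bM)=[\bM:\bL]$ for a full-rank sublattice, which follows by expressing a basis of $\bL$ in terms of a basis of $\bM$ via an integer matrix $B$ and using $\det(\bL)=\card{\det(B)}\det(\bM)$ together with $[\bM:\bL]=\card{\det(B)}$.
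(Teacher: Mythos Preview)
Your proof is correct and follows essentially the same route as the paper: reduce to $\bM=\Z^d$ by a change of coordinates and then invoke the finiteness of integer matrices in Hermite normal form with a fixed determinant $L$. The paper's version uses a lower-triangular Hermite convention and omits your alternative argument via subgroups of $(\Z/L\Z)^d$, but these are cosmetic differences only.
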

This proposition is folklore and can be derived using transformation of $d$$\times$$d$ integral  matrices into Hermite normal form; see \cite[Theorem~2.2 and the preceding paragraph%
]{MR1373675}. We rely on some arguments of the proof later on in our computer enumeration, therefore we give details.
	
\begin{proof}[Proof of Proposition~\ref{finite:choice:bL}]
	        After possibly changing coordinates we have $\bM=\Z^d$. Then each $\bL$ as in the assertion can be given as $\bL = B (\Z^d)$ with a suitable $B \in \Z^{d \times d}$ having determinant $L$. For every $d$$\times$$d$ unimodular matrix $U \in \Z^{d \times d}$ one has $U(\Z^d) = \Z^d$. Hence $\bL = B U (\Z^d)$. We can choose $U$ such that $H = B U$ is the Hermite normal form of $B$; see \cite[Chapter~4]{MR874114}. The condition that the $d$$\times$$d$ matrix $H$ is in Hermite normal form means that $H$ is lower triangular, the elements of $H$ are nonnegative and each row of $H$ has a unique maximum element, which is the element lying on the main diagonal of~$H$. We have $\det(H) = \det(BU) = \det(B) = L$. Clearly, there are only finitely many $d$$\times$$d$ matrices in Hermite normal form with the determinant $L$.
\end{proof}

To deal with the case $w(\Delta,\bM) \in \{3,4\}$, we will go through all possible $\bL$ and, for each choice of $\bL$, we will enumerate, up to translations, all sets $T$ with $(\bM,\bL,T) \in \cT^2$ and $T$ given as in Lemma~\ref{key-to-enumeration}\,\myeqref{T-is-cut-out-form-cell}. This enumeration will rely on the following second key lemma. The latter is formulated for arbitrary~$d \ge 2$, though for proving Theorem~\ref{thm:complete-classification} we need the case $d=2$ only. 

\begin{lemma}
  \label{lem-enum-T-given-bL}
  Let~$(\bM,\bL,T)\in\cT^{d}$ with $\bM=\Z^d$. Let~$b_1,\ldots,b_d \in \bL$ be a basis of $\bL$ such that the matrix $B \in \Z^{d \times d}$  with columns $b_1,\ldots,b_d$, in that order, satisfies $L:=\det(B)>0$. Let  $A$ be the adjugate matrix of $B$, i.e., $A=L \cdot B^{-1}\in \Z^{d \times d}$.  For each $i \in \{1,\ldots,d\}$, let $n_i$ be the greatest common divisor of the entries in the $i$-th row of $A$. Assume
  \[
    T=\bM \cap \Bigl(v + \sum_{i=1}^d (0,1] b_i \Bigr)
  \]
for some $v \in \R^d$.    
 Then $T$ coincides, up to a translation by a vector in $\bL$, with
  \begin{align}
    \label{T-q-def}
    T_q:=\bM \cap \sum_{i=1}^d [ (q_i + n_i)/L, (q_i+L)/L ] b_i
  \end{align}
  for some $q=(q_1,\ldots,q_d)$ that satisfies 
  \begin{align}
    \label{q-range}
    q_i \in \{0,\ldots, L-1\}\cap n_i \Z \qquad (i\in \{1,\ldots,d\}).
  \end{align}
\end{lemma}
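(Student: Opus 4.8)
The plan is to pass to coordinates adapted to the lattice $\bL$ and then translate $T$ inside the half-open parallelepiped so that it becomes one of the finitely many sets $T_q$. First I would record the basic linear-algebra facts: since $B$ has columns $b_1,\dots,b_d$ and $\det(B)=L>0$, the adjugate $A=L\,B^{-1}\in\Z^{d\times d}$ satisfies $AB=BA=L I$. A point $x\in\R^d$ lies in $\bL=B(\Z^d)$ iff $B^{-1}x\in\Z^d$, equivalently $Ax\in L\Z^d$; more generally $x\in\bM=\Z^d$ translates, after writing $x$ in the basis $b_1,\dots,b_d$ as $x=\sum_i \lambda_i b_i$ with $\lambda=(\lambda_1,\dots,\lambda_d)\in\R^d$, into the condition $B\lambda\in\Z^d$, i.e.\ $\lambda=B^{-1}y=\frac1L Ay$ for some $y\in\Z^d$. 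So in the $b$-coordinates $\lambda$, the lattice $\bM$ becomes $\frac1L A(\Z^d)$, a lattice containing $\Z^d$ with index $L$.

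Next I would analyze the structure of this lattice coordinate by coordinate. The $i$-th coordinate of $\frac1L Ay$, as $y$ ranges over $\Z^d$, takes exactly the values $\frac1L\,(\text{$i$-th row of }A)\cdot y$; since $n_i=\gcd$ of the entries of the $i$-th row of $A$, this set of values is precisely $\frac{n_i}{L}\Z$. Because $n_i\mid L$ (as $n_i$ divides every entry of the $i$-th row of $A$, hence divides the $i$-th diagonal entry of $AB=LI$, which equals... more carefully: the $i$-th row of $A$ dotted with $b_i$ equals $L$, so $n_i\mid L$), the quotient $\frac{n_i}{L}\Z / \Z$ has order $L/n_i$. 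Now $T=\bM\cap\bigl(v+\sum_i(0,1]b_i\bigr)$: writing $v=\sum_i v_i b_i$, the $b$-coordinates $\lambda$ of points of $T$ satisfy $\lambda_i\in(v_i,v_i+1]$ and $\lambda\in\frac1L A(\Z^d)$. I would first translate $T$ by a vector of $\bL$ (i.e.\ shift $\lambda$ by an integer vector) to arrange $v_i\in[0,1)$ for every $i$; this does not change $(\bM,\bL,T)\in\cT^d$ and replaces $T$ by a translate by a $\bL$-vector, which is all the lemma allows. After this normalization each $\lambda_i$ ranges over $(v_i,v_i+1]\cap\frac{n_i}{L}\Z$; since this interval has length $1$ and the arithmetic progression $\frac{n_i}{L}\Z$ has step $\frac{n_i}{L}\le 1$, this intersection is nonempty and its smallest element is the unique multiple of $\frac{n_i}{L}$ in $(v_i, v_i+\frac{n_i}{L}]$, which I would write as $\frac{q_i+n_i}{L}$ with $q_i\in\{0,\dots,L-1\}\cap n_i\Z$ (namely $q_i$ is determined by $\lceil L v_i/n_i\rceil$ up to the obvious adjustment, and is divisible by $n_i$ because the step divides... wait — here one must be slightly careful: $q_i$ should be a multiple of $n_i$; I would define $q_i$ so that $\frac{q_i+n_i}{L}$ is the least element of $(v_i,v_i+1]\cap\frac{n_i}{L}\Z$, note this element is $\frac{n_i}{L}m_i$ for an integer $m_i$, set $q_i=n_i(m_i-1)$, and check $q_i\in\{0,\dots,L-1\}$ from $v_i\in[0,1)$). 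Then the largest element of the same intersection is $\frac{q_i+L}{L}$, so the $i$-th coordinate interval is exactly $[\frac{q_i+n_i}{L},\frac{q_i+L}{L}]$, and intersecting the full product with $\bM$ reproduces $T_q$ as in~\eqref{T-q-def}; the range~\eqref{q-range} is exactly what we just verified.

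The one genuine subtlety — and the step I expect to need the most care — is the interplay between the half-open box $(v+\sum(0,1]b_i)$ and the closed box in~\eqref{T-q-def}: I must be sure that replacing the half-open parallelepiped by the closed product of the coordinate intervals $[\frac{q_i+n_i}{L},\frac{q_i+L}{L}]$ captures the same lattice points of $\bM$, i.e.\ that no point of $\bM$ with some coordinate equal to $v_i$ (the excluded face) lies in $T$, while the point with coordinate $\frac{q_i+L}{L}=v_i+1\cdot$(something)$\le v_i+1$ is included. This is exactly why the left endpoint in~\eqref{T-q-def} is $(q_i+n_i)/L$ (strictly to the right of $v_i$, the first attained value) rather than $(q_i)/L$, and the right endpoint is $(q_i+L)/L$ (the last attained value, $\le v_i+1$). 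Once the coordinatewise description of $\bM$ as $\frac1L A(\Z^d)$ is in hand, everything else is bookkeeping, so I would present the lattice-coordinate computation carefully and then state the conclusion about $T_q$ and the range of $q$ without further ado.
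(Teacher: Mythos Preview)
Your proposal is correct and follows essentially the same route as the paper's proof: both arguments hinge on the observation that, for $t\in\Z^d$, the quantity $\sprod{t}{a_i}$ (equivalently, your $i$-th $b$-coordinate $\lambda_i$ times $L$) lies in $n_i\Z$, so the half-open condition $\sprod{v}{a_i}<\sprod{t}{a_i}\le\sprod{v}{a_i}+L$ can be replaced by the closed integer interval $q_i+n_i\le\sprod{t}{a_i}\le q_i+L$ with $q_i=n_i\lfloor\sprod{v}{a_i}/n_i\rfloor$. The only cosmetic differences are that the paper works directly with the pairings $\sprod{t}{a_i}=L\sprod{t}{b_i^\ast}$ rather than passing to $b$-coordinates, and it first establishes $T=T_q$ exactly for some $q\in n_i\Z$ and then reduces $q$ modulo $L$ via $T_q+b_j=T_{q+Le_j}$, whereas you normalize $v$ into $[0,1)^d$ first; both orders work.
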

\begin{proof} We first show  the exact equality $T=T_q$ for some $q$ with $q_i \in n_i \Z$ for each $i \in \{1,\ldots,d\}$. In a second step we derive the equality up to translations with the range of $q$ given by \myeqref{q-range}.

Let~$b_1^\ast,\ldots,b_d^\ast$ be the dual basis of $b_1,\ldots,b_d$ and, for $i \in \{1,\ldots,d\}$, let $a_i$ be the $i$-th row of $A$. Thus, $n_i=\gcd(a_i)$ and $a_i= L b_i^\ast$. For each $t \in \bM$  and each $i\in\{1,\ldots,d\}$ the condition $a_i / n_i \in \Z^d$ implies $\sprod{t}{a_i} = n_i \sprod{t}{a_i/n_i} \in n_{i}\Z$. Analogously, $L = \sprod{b_i}{a_i} = n_i \sprod{b_i}{a_i/n_i} \in n_{i}\Z$. 

 Clearly, for any $t\in\bM$ one has $t \in v + \sum_{i=1}^d (0,1] b_i$ if and only if $0 < \sprod{t-v}{b_i^\ast} \le 1$ for every $i \in \{1,\ldots,d\}$.  Using $a_i = L b_i^\ast$, the condition $0 < \sprod{t -v}{b_i^\ast} \le 1$ can be reformulated as  \(\sprod{v}{a_i} <  \sprod{t}{a_i} \le  \sprod{v}{a_i}+L\). Dividing by $n_i$, we get 
\begin{equation} \label{t-a_i-n_i}
  \sprod{v}{a_i} /n_i < \sprod{t}{a_i}/n_i \le  \sprod{v}{a_i}/n_i +  L/n_i,
\end{equation}
where the values $\sprod{t}{a_i}/n_i$ and $L/n_i$ are integer, while $\sprod{v}{a_i} / n_i$ is possibly fractional. So in~\myeqref{t-a_i-n_i} rounding down $\sprod{v}{a_i}/ n_i$ does not change the condition on~$t \in \bM$.  We obtain
\begin{equation} \label{t-a_i-n_i-with-rounding}
  \floor{\sprod{v}{a_i} /n_i} < \sprod{t}{a_i}/n_i \le   \floor{\sprod{v}{a_i}/n_i} + L/n_i.
\end{equation}
Since $\floor{\sprod{v}{a_i} /n_i}$ and $\sprod{t}{a_i}/n_i$ are integers, \myeqref{t-a_i-n_i-with-rounding} can be rewritten as 
\[
  \floor{\sprod{v}{a_i} /n_i} + 1 \le \sprod{t}{a_i}/n_i 
\le
  \floor{\sprod{v}{a_i}/n_i} + L/n_i.
\]
Choosing $q_i = n_i \floor{\sprod{v}{a_i} /n_i} \in n_i \Z$ for each $i$, we get 
\begin{equation}
  \label{T-through-q}
  T = \setcond{t \in \bM}{ q_i + n_i \le \sprod{t}{a_i} \le q_i+L~\text{for each $i\in\{1,\ldots,d\}$}}=T_{q},
\end{equation}
where the last equality uses $T_q$ as in \myeqref{T-q-def} and is straightforward to check in view of  $a_i= L b_i^\ast$.

It remains to show that $T$ coincides with $T_q$, up to a translation by a vector in $\bL$, with some $q$ satisfying \myeqref{q-range}. In view of \myeqref{T-through-q}, one directly computes $T_{q} + b_j = T_{q + L e_j}$ and, similarly, $T_q - b_j = T_{q- L e_i}$ for each $j\in\{1,\ldots,d\}$. In other words, by adding $L$ to $q_j$ we translate $T_q$ by $b_j$ and by subtracting~$L$ from $q_j$ we translate $T_q$ by $-b_j$. Since $L$ is divisible by $n_j$, such a change of $q_j$ does not affect the condition $q_j \in n_j \Z$. Suitably performing the mentioned changes of the components of $q$ finitely many times we obtain $q \in \{0,\ldots,L-1\}^d$, concluding the proof.
\end{proof}

\begin{proof}[Proof of Theorem~\ref{thm:complete-classification}] After possibly changing coordinates we have $\bM=\Z^2$.

\myeqref{thm-item:complete-classification:T-has-width-1}\,$\Rightarrow$\,\myeqref{thm-item:complete-classification:ST-give-NontrHomPair}: Let \myeqref{thm-item:complete-classification:T-has-width-1} hold. After a suitable unimodular transformation we have $a_1=e_1$, $a_2=e_2$, $b_{1}=(k+1,-1)$, $b_{2}=(k,1)$, and $T+a=\{0,\ldots,k\} \times \{0\} \cup \{1,\ldots,k-1\} \times \{1\}$. It was shown in \cite[Theorem 2.5, Corollary 2.6]{AveL12} that \myeqref{thm-item:complete-classification:T-has-width-1}\,$\Rightarrow$\,\myeqref{thm-item:complete-classification:ST-give-NontrHomPair} holds for this choice of $(\bM,\bL,T) \in \cT^2$.

\myeqref{thm-item:complete-classification:ST-give-NontrHomPair}\,$\Rightarrow$\,\myeqref{thm-item:complete-classification:T-has-width-1}: By Lemma~\ref{lem:three-directions:in:sU_T}, both $S$ and $T$ are two-dimensional and noncentrally symmetric and the set $W(T,\bL)$ contains at least three pairwise nonparallel vectors. We borrow the notation from  Lemma~\ref{key-to-enumeration}. This lemma yields $w(\Delta,\bM)\in\{2,3,4\}$.

\emph{Case 1: $w(\Delta,\bM)=2$}. Lemma~\ref{key-to-enumeration}\,\myeqref{width-of-Delta-and-T} and two-dimensionality of $T$ yield $w(T,\bM)=1$, that is, up to a suitable unimodular transformation, a translate of $T$ is given by $\{0,\ldots,k\} \times \{0\} \cup \{1,\ldots,\ell \} \times \{1\}$ with some $\ell\in\Z$ satisfying $k \ge \ell\geq 0$. We have $k > \ell$, since otherwise $T$ is centrally symmetric. Now the implication~\myeqref{thm-item:complete-classification:ST-give-NontrHomPair}\,$\Rightarrow$\,\myeqref{thm-item:complete-classification:T-has-width-1} follows directly from \cite[Theorem~2.5 and Corollary 2.6]{AveL12}.

\emph{Case 2: $w(\Delta,\bM) \in \{3,4\}$}. Due to Lemma~\ref{key-to-enumeration}, it is sufficient consider the finitely many sublattices $\bL$ of $\bM$ with $\det(\bL)\in\{7,\ldots,18\}$ for $w(\Delta,\bM) = 3$ and $\det(\bL)\in\{12,\ldots,16\}$ for $w(\Delta,\bM)=4$. Let~$\bL$ be such a sublattice and let $b_{1},b_{2}\in\bL$ and $b_1^\ast, b_2^\ast\in\bL^{*}$ be as in Lemma~\ref{key-to-enumeration}\,\myeqref{thin-in-three-directions} and~\myeqref{T-is-cut-out-form-cell}. We can change coordinates using Proposition~\ref{change:prp} in such a way that $(\bM,\bL,T)$ is replaced by~$(U(\bM),U(\bL),U(T))$, but $U(\bM)$ is still $\Z^2$, i.\,e., we use a unimodular matrix $U\in \Z^{2\times 2}$. There exists a unimodular matrix $U$ such that the transpose $B^\top U^\top$ of $UB$ is in Hermite normal form, where~$B$ has columns $b_1$ and $b_2$; see the proof of Proposition~\ref{finite:choice:bL}. So, after a suitable change of coordinates we assume without loss of generality that 
  \begin{align}\label{eq:baisis-in-Hermite-normal-form}
    b_{1}&=(\ell,0),  & b_{2}&=(s,h)
  \end{align}
for some integer values $\ell,s,h \ge 0 $ with $s<h$. Such a triple~$\ell, s, h$  determines the lattice $\bL$ and one has $\ell h=\det(\bL)$. By Lemma~\ref{key-to-enumeration}\,\myeqref{T-is-cut-out-form-cell} and Lemma~\ref{lem-enum-T-given-bL}, the set $T$ coincides up to translations with one of the finitely many sets $T_q$ defined in Lemma~\ref{lem-enum-T-given-bL}. 

Using Magma we performed the following computer search (for details, see Section~\ref{sec:magma-code}). We enumerated all triples of integers~$\ell, s, h \ge 0 $ with $s < h$ and with $\ell h = \det(\bL)\in\{7,\ldots,18\}$. For each such triple we checked the validity of \myeqref{det-ratio-range}. Whenever \myeqref{det-ratio-range} was fulfilled, we enumerated all $T_q$ as in Lemma~\ref{lem-enum-T-given-bL} and checked the validity of $\dim(T_q)=2$ and the condition $w(T_q,b_1^\ast + b_2^\ast)< 1$ from Lemma~\ref{key-to-enumeration}\,\myeqref{thin-in-three-directions}. 

The search showed that the sets $T_q$ which pass all mentioned tests coincide, up to affine unimodular transformations, with  $\{o,\pm e_1,\pm e_2,\pm(e_1+e_2)\}$. The latter set is centrally symmetric. Since $T$ must be noncentrally symmetric, such sets $T_q$ can be discarded, concluding the proof.
\end{proof}

\begin{remark}\label{rem:centrally-symmetric-T}
	The above arguments can be used to provide an explicit description of all tilings $(\bM,\bL,T) \in \cT^2$, where $T$ is two-dimensional and $W(T,\bL)$ contains at least three pairwise nonparallel vectors (without restricting $T$ to be centrally symmetric). Up to unimodular transformations of~$\bM$ (and thus $\bL$ and $T$) and translations of~$T$, apart from the tiling given in Example~\ref{AveL12:example}, the only remaining case to be analyzed is $w(T,\bM)=1$ and $T =  \{0,\ldots,k\} \times \{0,1\}$ (with $k \in \N$). In this case, for an appropriate choice of $\bL$, the set $W(T,\bL)$ contains three pairwise nonparallel vectors; see Figures~\ref{fig:centrally-symmetric-T} (b), (c), and (d) for an illustration for $k=2$. Furthermore, the computer enumeration that we performed yields one more example (again, up to affine transformations that preserve $\bM$) that is depicted in  Figures~\ref{fig:centrally-symmetric-T} (f), (g), and (h). 
\end{remark}

\begin{figure}[htb]
  \begin{tabular}{p{3.5cm}p{3.5cm}p{3.5cm}p{3.5cm}}
    (a)\par\smallskip\qquad\includegraphics{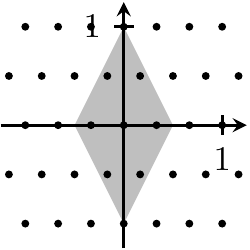}&
    (b)\par\smallskip\includegraphics{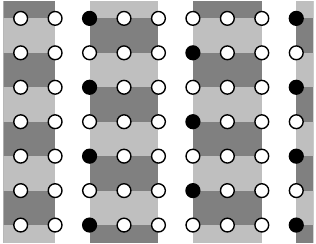}&
    (c)\par\smallskip\includegraphics{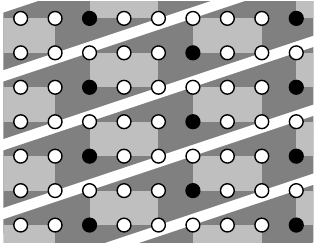}&
    (d)\par\smallskip\includegraphics{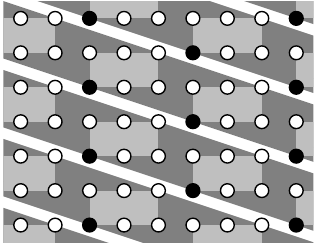}
    \\[2ex]
    (e)\par\smallskip\qquad\includegraphics{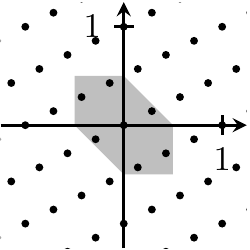}&
    (f)\par\smallskip\includegraphics{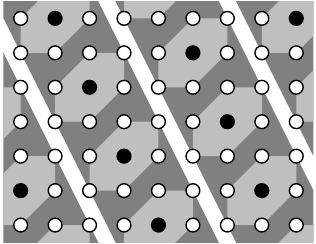}&
    (g)\par\smallskip\includegraphics{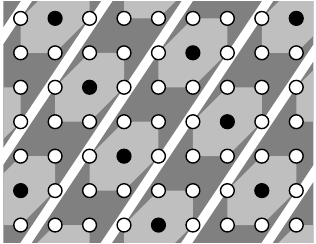}&
    (h)\par\smallskip\includegraphics{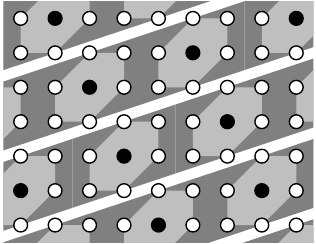}
   \end{tabular}
   \caption{Two examples of tilings $(\bM,\bL,T) \in \cT^2$ with a two-dimensional $T$ and $W(T,\bL)$ consisting of six vectors. The layout of the pictures is the same as in Figure~\ref{fig:width:AveL12:strips} on page \pageref{fig:width:AveL12:strips}.}
   \label{fig:centrally-symmetric-T}
\end{figure}

\goodbreak
\section{Constructions and examples}\label{sec:examples}

\paragraph{A generalization of Example~\ref{AveL12:example} to arbitrary dimension $d\geq 2$.} We construct a generalization of the tilings in Example~\ref{AveL12:example}. The following lemma provides the necessary tools.

\begin{lemma} \label{dual:special:lattice}
	Let~$a = (a_1,\ldots,a_d) \in \Z^d$ and let $r \in \N$. Then the following statements hold:
	\begin{enumerate}[(a)]
		\item\label{lem-item:dual:special:a} $\bL := \setcond{z \in \Z^d}{\sprod{z}{a} \in r \Z}$ is a lattice of rank $d$.
		\item\label{lem-item:dual:special:b} $\bL^\ast = \Z^d + \Z \frac{a}{r}$. 
		\item\label{lem-item:dual:special:c} If $a_1=1$ and $b^*_{i}:=\frac{1}{r}a-\sum_{\ell=i +1}^{d}e_\ell$ for $i \in \{1,\ldots,d\}$, then $b^*_{1},\ldots,b^*_{d}$ is a basis of $\bL^\ast$.
		\item \label{lem-item:dual:special:d} If $a_1=1$, then the vectors $b_1,\ldots,b_d$ given by
		\begin{equation*}
			b_i := 
			\begin{cases}
				a_2 e_1 - e_2 & \text{if} \ i=1,
				\\ (a_{i+1}-a_i) e_1 + e_i - e_{i+1} & \text{if} \ 2 \le i \le d-1,
				\\ (r-a_d) e_1 + e_d & \text{if} \ i=d
			\end{cases}
		\end{equation*}
		form a basis of $\bL$ which is dual to the basis $b_1^\ast,\ldots,b_d^\ast$ in \myeqref{lem-item:dual:special:c}.
	\end{enumerate}
\end{lemma}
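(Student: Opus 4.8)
The plan is to prove the four parts of Lemma~\ref{dual:special:lattice} in order, since each relies on the previous ones.

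\textbf{Part \myeqref{lem-item:dual:special:a}.} First I would observe that $\bL$ is the kernel of the composite group homomorphism $\Z^d \xrightarrow{z \mapsto \sprod{z}{a}} \Z \to \Z/r\Z$, hence a subgroup of $\Z^d$ and thus a lattice of some rank $i \le d$. To see that the rank is exactly $d$, it suffices to exhibit $d$ linearly independent elements of $\bL$: for instance $r e_1, \ldots, r e_d$ all lie in $\bL$ (as $\sprod{r e_j}{a} = r a_j \in r\Z$), and these are clearly linearly independent. Hence $\bL$ has rank $d$.

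\textbf{Part \myeqref{lem-item:dual:special:b}.} I would prove the two inclusions of $\bL^\ast = \Z^d + \Z \frac{a}{r}$ separately. For "$\supseteq$": any $z \in \Z^d$ pairs integrally with every element of $\bL \subseteq \Z^d$, and $\frac{a}{r}$ pairs with $y \in \bL$ to give $\frac{1}{r}\sprod{y}{a} \in \Z$ by definition of $\bL$; since $\bL^\ast$ is a lattice (in particular a group), it contains $\Z^d + \Z\frac{a}{r}$. For "$\subseteq$": let $u \in \bL^\ast$. Since $r\Z^d \subseteq \bL$, the vector $u$ pairs integrally with $r e_1, \ldots, r e_d$, so $r u \in \Z^d$; thus $u = \frac{1}{r} v$ for some $v \in \Z^d$. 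The condition $\sprod{u}{y} \in \Z$ for all $y \in \bL$ becomes $\sprod{v}{y} \in r\Z$ for all $y \in \bL$, i.e. $v \in \bL$ (using the very definition of $\bL$, which characterizes membership by $\sprod{\cdot}{a} \in r\Z$ --- here I use that $\bL = \{z : \sprod{z}{a}\in r\Z\}$, so I need $\sprod{v}{a}\in r\Z$, which follows since $a \in \bL$ by part~\myeqref{lem-item:dual:special:a}'s defining relation $\sprod{a}{a} = \sum a_i^2$... ) --- actually the cleanest route is: reduce $v$ modulo $r$ componentwise and use that the residue must pair to $0$ in $\Z/r\Z$ against all of $\bL/r\Z^d$; since $\bL/r\Z^d$ is the kernel of $z\mapsto \sprod{z}{a}$ in $(\Z/r\Z)^d$, its annihilator is generated by $a \bmod r$, giving $v \equiv \lambda a \pmod r$ for some $\lambda$, hence $u = \frac{v}{r} \in \Z^d + \Z\frac{a}{r}$. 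I expect this annihilator/duality-for-finite-abelian-groups step to be the main (though still routine) obstacle, and I would phrase it via the nondegenerate pairing $(\Z/r\Z)^d \times (\Z/r\Z)^d \to \Z/r\Z$.

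\textbf{Parts \myeqref{lem-item:dual:special:c} and \myeqref{lem-item:dual:special:d}.} Here $a_1 = 1$. For \myeqref{lem-item:dual:special:c}, I would show $b_1^\ast, \ldots, b_d^\ast$ lie in $\bL^\ast$ and form a basis. Membership: each $b_i^\ast = \frac{1}{r}a - \sum_{\ell=i+1}^d e_\ell$ is visibly in $\Z^d + \Z\frac{a}{r} = \bL^\ast$. To see they form a \emph{basis}, it suffices to check that the change-of-basis matrix from the spanning set $\{e_1, \ldots, e_d, \frac{a}{r}\}$ is unimodular in the appropriate sense; concretely, I would verify that $b_1^\ast = \frac{a}{r} - e_2 - \cdots - e_d$ and that the differences telescope: $b_i^\ast - b_{i+1}^\ast = e_{i+1}$ for $1 \le i \le d-1$. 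Thus $e_2, \ldots, e_d \in \Z b_1^\ast + \cdots + \Z b_d^\ast$, and then $\frac{a}{r} = b_1^\ast + e_2 + \cdots + e_d$ is too, and $e_1 = \frac{a}{r}\cdot r - a_2 e_2 - \cdots - a_d e_d$ wait --- $e_1$ needs $a_1 = 1$: indeed $a = e_1 + a_2 e_2 + \cdots + a_d e_d$, so $e_1 = a - a_2 e_2 - \cdots - a_d e_d \in r\cdot\frac{a}{r}\Z + \cdots$; more carefully $e_1 = r(b_1^\ast + e_2 + \cdots + e_d) - a_2 e_2 - \cdots - a_d e_d$, which lies in the span. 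Hence the span contains $\Z^d + \Z\frac{a}{r} = \bL^\ast$, and since the $b_i^\ast$ themselves lie in $\bL^\ast$ and are $d$ in number and $\bL^\ast$ has rank $d$, they form a basis. For \myeqref{lem-item:dual:special:d}, I would simply compute the pairings $\sprod{b_i}{b_j^\ast}$ directly using $b_j^\ast = \frac{1}{r}a - \sum_{\ell=j+1}^d e_\ell$ and the explicit $b_i$: since $\sprod{b_i}{a} = r \delta_{\text{(something)}}$ --- actually one checks $\sprod{b_i}{a} = 0$ for $i < d$ and a controlled value for $i = d$ --- it reduces to a short bookkeeping computation showing $\sprod{b_i}{b_j^\ast} = \delta_{ij}$ for all $i,j$. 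Having $\sprod{b_i}{b_j^\ast} = \delta_{ij}$ with $b_1^\ast, \ldots, b_d^\ast$ a basis of $\bL^\ast$ forces $b_1, \ldots, b_d$ to be the dual basis, hence a basis of $(\bL^\ast)^\ast = \bL$; alternatively one checks the $b_i \in \bL$ directly (each satisfies $\sprod{b_i}{a} \in r\Z$, again using $a_1 = 1$) and that $\det(b_1,\ldots,b_d) = \pm\det(\bL)$. The main obstacle throughout is part~\myeqref{lem-item:dual:special:b}'s duality argument for the finite quotient; parts \myeqref{lem-item:dual:special:c} and \myeqref{lem-item:dual:special:d} are then purely computational verifications of telescoping identities and Kronecker-delta pairings.
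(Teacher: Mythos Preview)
Your proposal is correct, and parts~\myeqref{lem-item:dual:special:a}, \myeqref{lem-item:dual:special:c}, \myeqref{lem-item:dual:special:d} match the paper's proof almost exactly (same rank witnesses $re_1,\ldots,re_d$, same telescoping $e_i = b_i^\ast - b_{i-1}^\ast$, same direct verification of $\sprod{b_i}{b_j^\ast}=\delta_{ij}$; your sign slip $b_i^\ast - b_{i+1}^\ast = e_{i+1}$ should read $-e_{i+1}$, but this is harmless).

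The one genuine divergence is part~\myeqref{lem-item:dual:special:b}. You prove the two inclusions separately, and for the hard inclusion you pass to the finite quotient $(\Z/r\Z)^d$ and invoke that the annihilator of $\ker(z\mapsto\sprod{z}{a})$ under the perfect pairing is exactly the cyclic group generated by $a\bmod r$. This is correct (a cardinality count shows $\langle a\rangle\subseteq K^\perp$ with equal orders $r/\gcd(a_1,\ldots,a_d,r)$), but it is the ``main obstacle'' you flag, and it requires either citing or reproving this finite-duality fact. The paper sidesteps it entirely: it dualizes both sides of the desired equality, reducing to $\bL = (\Z^d + \Z\tfrac{a}{r})^\ast$, which is then verified by a two-line chain of equivalences (membership in the dual of a lattice given by generators just means pairing integrally with each generator). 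The paper's route buys brevity and avoids any structure theory of finite abelian groups; yours is more hands-on and makes the index computation explicit, which could be useful if one wanted, say, a description of $\bL^\ast/\Z^d$.
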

\begin{proof}
	\myeqref{lem-item:dual:special:a}: There are several equivalent ways to define lattices. Following~\cite[pp.~279-280]{MR1940576}, the set~$\bL$ is a lattice if and only if $\bL$ is an additive subgroup of $\R^d$ such that some neighborhood of~$o$ contains no points of $\bL \setminus \{o\}$. The latter is clearly fulfilled, so $\bL$ is a lattice. To see that $\bL$ has rank~$d$, observe that  the $d$ linearly independent vectors $r e_1,\ldots,r e_d$ belong to $\bL$.
	
	\myeqref{lem-item:dual:special:b}: Dualization of the left and the right hand side yields that $\bL^\ast = \Z^d + \Z \frac{a}{r}$ is equivalent to~$\bL = (\Z^d + \Z \frac{a}{r})^\ast$. The latter equality is shown as follows:
        \begin{align*}
         x\in \textstyle(\Z^d + \Z \frac{a}{r})^\ast
         \quad\Leftrightarrow\quad & \forall\, z \in \Z^d : \forall\, \ell \in \Z:  \textstyle\sprod{x}{z + \ell \frac{a}{r}}  \in \Z &&\text{}\\
\Leftrightarrow\quad&(\forall\, z \in \Z^d: \sprod{x}{z} \in \Z^d)
\quad\text{and}\quad(\forall\, \ell \in \Z: \textstyle \ell \sprod{x}{\frac{a}{r}} \in \Z)
\\
\Leftrightarrow\quad&x \in \Z^d
\quad\text{and}\quad(\forall\, \ell \in \Z: \textstyle \ell \sprod{x}{\frac{a}{r}} \in \Z)
\\
\quad\Leftrightarrow\quad&x\in\bL,
        \end{align*}
where in the second equivalence one considers the special cases $\ell=0$ and $z =o$.

\myeqref{lem-item:dual:special:c}: Clearly, $\bL^\ast$ has rank $d$ and $b_1^\ast, \ldots, b_d^\ast\in\bL^\ast$ due to~\myeqref{lem-item:dual:special:b}. Thus, it suffices to check that~$\bL^\ast \subseteq \Z b_1^\ast + \cdots + \Z b_d^\ast$. Since $\bL^\ast = \Z e_1 + \cdots + \Z e_d  + \Z \frac{a}{r}$, it is sufficient to show that the vectors $e_1,\ldots,e_d, \frac{a}{r}$ belong to $\Z b_1^\ast + \cdots + \Z b_d^\ast$. We have $\frac{a}{r} = b_d^\ast$ and it remains to consider the vectors $e_1,\ldots,e_d$. Clearly, $e_i = b_{i}^{\ast}-b_{i-1}^\ast$ for each $i \in \{2,\ldots,d\}$. Further, using $a_1=1$ we get $e_1 = a - (a_2 e_2 + \cdots + a_d e_d) \subseteq \Z b_1^\ast + \cdots + \Z b_d^\ast$. It follows that $b_1^\ast,\ldots, b_d^\ast$ is a basis of $\bL^\ast$. 

\myeqref{lem-item:dual:special:d}: The equalities $\sprod{b_i}{b_j^\ast} = \delta_{ij}$ for all $i,j \in \{1,\ldots,d\}$ can be checked straightforwardly.
\end{proof}

\begin{example} \label{AveL12:gen:example}
	Let 
	\begin{align} \label{AveL12:gen:example:prelim}
		d & \ge 2, & k & \in \N, & \bM  &:= \Z^d.
	\end{align}
	We construct $T$ to be a union of $d$ parallel lattice segments, where one of the lattice segments is~$\{0,\ldots, k\} e_1$, which consists of $k+1$ points. The remaining $d-1$ lattice segments are $\{0,\ldots,k-1\} e_1 + e_i$ with $i \in \{2,\ldots,d\}$, each consisting of $k$ lattice points. That is, $T$ has
	\begin{equation} \label{AveL12:gen:example:r}
		r:=dk + 1
	\end{equation}
	elements and is given by 
	\begin{equation} \label{AveL12:gen:example:T}
		T = \{0,\ldots,k\} e_1 \cup \bigcup_{i=2}^d ( \{0,\ldots,k-1\} e_1 + e_i).
	\end{equation}
	Clearly, $T$ is $\bM$-convex, finite, and $d$-dimensional. 
	
	Having fixed $\bM$ and $T$, we want to choose $\bL$ such that $\bM = \bL \oplus T$ holds and with the lattice~$\bL$ defined as in Lemma~\ref{dual:special:lattice}. To this end, we first introduce a linear function from $\R^d$ to $\R$, which sends $\bM$ to $\Z$ and maps the $d$ parallel lattice segments, of which $T$ is comprised, to $d$ consecutive lattice segments in $\Z$. This linear function is defined by prescribing the images of the standard basis $e_1,\ldots,e_d$. We map $e_1$ to $1$, ensuring that the $d$ lattice segments in the definition of $T$, which are all parallel to the vector $e_1$, are sent to lattice segments of $\Z$. Now $\{0,\ldots,k\} e_1$ is mapped to $\{0,\ldots,k\}$. We want the next lattice segment $\{0,\ldots,k-1\} e_1 + e_2$ to be mapped to the lattice segment which follows~$\{0,\ldots,k\}$, so we send $e_2$ to $k+1$. Proceeding iteratively in a similar fashion we see that whenever $e_i$ is sent to~$(i-1) k + 1$ for each $i \in \{1,\ldots,d\}$, the $d-1$ lattice segments~$\{0,\ldots,k-1\} e_1 + e_i$ with $i \in \{2,\ldots,d\}$ are mapped to~$d-1$ consecutive lattice segments of $\Z$. In other words, we have constructed a linear function sends $z \in \Z^d$ to~$\sprod{z}{a}$ with 
	\begin{equation} \label{AveL12:gen:example:a}
		a := \sum_{i=1}^d ((i-1) k + 1) e_i.
	\end{equation}
 This linear function is used to define
	\begin{equation}
		\label{AveL12:gen:example:bL}
		\bL := \setcond{z \in \Z^d}{\sprod{z}{a} \in r \Z}.
	\end{equation}
\end{example} 

	Next we show that the above example satisfies $(\bM,\bL,T) \in \cT^d$ (see Lemma~\ref{gen:example:is:indeed:tiling}) and that  the assumptions of Corollary~\ref{nontrivial-S-cor} are fulfilled for the tiling $(\bM,\bL,T)$ (see Lemma~\ref{lem:sU_T:for:AveL12:gen:example}). Thus, one can find $S$ such that $S \oplus T$ and $S \oplus (-T)$ are nontrivially homometric and $\bM$-convex. 

\begin{lemma}
	\label{gen:example:is:indeed:tiling}
	For $\bM,\bL$, and $T$ as in Example~\ref{AveL12:gen:example} (defined by \myeqref{AveL12:gen:example:prelim}--\myeqref{AveL12:gen:example:bL}), one has $(\bM,\bL,T) \in \cT^d$.
\end{lemma}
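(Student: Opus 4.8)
The plan is to verify the three defining conditions of the class $\cT^d$ for the triple $(\bM,\bL,T)$ from Example~\ref{AveL12:gen:example}: that $\bM$ and $\bL$ are lattices of rank $d$ with $\bL \subseteq \bM$, that $T$ is a nonempty finite $\bM$-convex set, and — the only nontrivial part — that $\bM = \bL \oplus T$. The rank and inclusion claims are immediate: $\bM = \Z^d$ is a lattice of rank $d$ by definition, and $\bL \subseteq \Z^d$ is a lattice of rank $d$ by Lemma~\ref{dual:special:lattice}\,\myeqref{lem-item:dual:special:a} (applied with the vector $a$ from~\myeqref{AveL12:gen:example:a} and the modulus $r = dk+1$ from~\myeqref{AveL12:gen:example:r}). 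The set $T$ in~\myeqref{AveL12:gen:example:T} is clearly a nonempty finite subset of $\Z^d$, and it was already observed in Example~\ref{AveL12:gen:example} that $T$ is $\bM$-convex and $d$-dimensional; so the whole burden is on proving $\bM = \bL \oplus T$.

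First I would record the key arithmetic fact about the linear functional $z \mapsto \sprod{z}{a}$: on the tile $T$, this functional is injective and its image is exactly $\{0,1,\ldots,r-1\}$. Indeed, by the very construction of $a$ in~\myeqref{AveL12:gen:example:a} (the choice $e_i \mapsto (i-1)k+1$), the $d$ lattice segments comprising $T$ are mapped to the $d$ consecutive blocks $\{0,\ldots,k\}$, $\{k+1,\ldots,2k\}$, \ldots, $\{(d-1)k+1,\ldots,dk\}$ of integers, and these blocks partition $\{0,\ldots,dk\} = \{0,\ldots,r-1\}$; so $\sprod{\cdot}{a}$ restricts to a bijection $T \to \{0,\ldots,r-1\}$. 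This is the combinatorial heart of the example and should be stated and checked carefully (it is a direct computation, counting the sizes $k+1, k, \ldots, k$ which sum to $r$).

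Next I would use this to prove directness and surjectivity of the sum $\bL + T$. \emph{Directness:} suppose $s + t = s' + t'$ with $s,s' \in \bL$ and $t,t' \in T$. Applying $\sprod{\cdot}{a}$ and reducing modulo $r$, and using that $\sprod{s}{a} \equiv \sprod{s'}{a} \equiv 0 \pmod r$, we get $\sprod{t}{a} \equiv \sprod{t'}{a} \pmod r$; since both values lie in $\{0,\ldots,r-1\}$ by the previous paragraph, $\sprod{t}{a} = \sprod{t'}{a}$, hence $t = t'$ by injectivity, hence $s = s'$. \emph{Surjectivity ($\bM \subseteq \bL + T$):} given any $z \in \Z^d$, let $m := \sprod{z}{a} \bmod r \in \{0,\ldots,r-1\}$ and pick the unique $t \in T$ with $\sprod{t}{a} = m$; then $z - t \in \Z^d$ and $\sprod{z-t}{a} = \sprod{z}{a} - m \in r\Z$, so $z - t \in \bL$ by definition~\myeqref{AveL12:gen:example:bL}, giving $z = (z-t) + t \in \bL + T$. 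The reverse inclusion $\bL + T \subseteq \Z^d = \bM$ is trivial. Combining, $\bM = \bL \oplus T$.

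The main obstacle is purely bookkeeping: one must be scrupulous in checking that the $d$ translated segments of $T$ really map under $\sprod{\cdot}{a}$ to the claimed consecutive integer blocks, i.e.\ that $\sprod{e_i}{a} = (i-1)k+1$ and that the segment $\{0,\ldots,k-1\}e_1 + e_i$ maps onto $\{(i-1)k+1,\ldots,ik\}$, with no gaps and no overlaps across $i = 1,\ldots,d$, and that the total count is $(k+1) + (d-1)k = dk+1 = r$. Once this injectivity-and-image statement is nailed down, everything else is a two-line application of the residue argument. No result beyond Lemma~\ref{dual:special:lattice}\,\myeqref{lem-item:dual:special:a} is needed, though one may optionally cross-check consistency with parts \myeqref{lem-item:dual:special:c} and~\myeqref{lem-item:dual:special:d} of that lemma (whose bases $b_1,\ldots,b_d$ of $\bL$ provide an alternative, more geometric route via Dirichlet cells, but the residue argument above is the shortest).
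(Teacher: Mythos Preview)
Your proposal is correct and follows essentially the same route as the paper: both arguments hinge on the observation that $z\mapsto\sprod{z}{a}$ restricts to a bijection $T\to\{0,\ldots,r-1\}$, and then obtain existence and uniqueness of the decomposition $z=x+t$ via integer division of $\sprod{z}{a}$ by $r$. The paper phrases the two halves as existence/uniqueness rather than surjectivity/directness, but the content is identical.
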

\begin{proof}
	We show that $\bM = \bL \oplus T$ holds; the remaining properties are easy to see. The construction of $\bL$ in Example~\ref{AveL12:gen:example} shows that the mapping $z \mapsto \sprod{a}{z}$ is a bijection from $T$ to~$\{0,\ldots,r-1\}$. This fact is used to show that every $z \in \bM=\Z^d$ is representable as $z = x+ t$ with~$x \in \bL$ and $t \in T$ in a unique way. We first verify the existence of $x$ and $t$. Using integer division of $\sprod{z}{a}$ by $r$, we write $\sprod{z}{a}$ as $\sprod{z}{a} = \ell r + m$ for suitable $\ell \in \Z$ and $m \in \{0,\ldots,r-1\}$.  There exists $t \in T$ such that $m = \sprod{t}{a}$. It follows that $z=x+t$, where $x:=z-t \in \bL$ and $t \in T$. 
	
	It remains to show that $x \in \bL$ and $t \in T$ are uniquely determined by $z$.  From $z = x + t$ we obtain $\sprod{z}{a} = \sprod{x}{a} + \sprod{t}{a}$. By the definition of $\bL$, one has $\sprod{x}{a} \in r \Z$ and, by the construction of $T$, one has $\sprod{t}{a} \in \{0,\ldots,r-1\}$. Thus, $\sprod{x}{a}/r$ is the uniquely determined quotient and $\sprod{t}{a}$ is the uniquely determined rest of the integer division of $\sprod{z}{a}$ by $r$. We have shown that $\sprod{t}{a}$ is uniquely determined by $z$. It follows that $t$ is uniquely determined by $z$, since the mapping~$z \mapsto \sprod{z}{a}$ is a bijection from $T$ to $\{0,\ldots,r-1\}$. Since~$t$ is uniquely determined by $z$, the point~$x$, too, is determined uniquely in view of $x = z-t$.
\end{proof}

The tiling in Example~\ref{AveL12:gen:example} contains nontrivially homometric pairs of lattice-convex sets, such as the one depicted in Figure~\ref{fig:exa:simplex-with-one-facet-stretched-out}. To see this, we determine some elements of $W(T,\bL)$.

\begin{lemma} \label{lem:sU_T:for:AveL12:gen:example}
	For $\bL$ and $T$ defined as in Example~\ref{AveL12:gen:example} (by \myeqref{AveL12:gen:example:prelim}--\myeqref{AveL12:gen:example:bL}), let $b_1^\ast,\ldots,b_d^\ast$ be the basis of the lattice $\bL^\ast$ as in Lemma~\ref{dual:special:lattice}\,\myeqref{lem-item:dual:special:c} and let $b_{d+1}^\ast := b_1^\ast + \cdots + b_d^\ast$. Then $\{\pm b_1^\ast, \ldots, \pm b_{d+1}^\ast\} \subseteq W(T,\bL)$.
\end{lemma}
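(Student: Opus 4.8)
The plan is to reduce the statement to a width computation in the $d+1$ directions $b_1^\ast,\ldots,b_{d+1}^\ast$. Since $w(T,-u)=w(T,u)$, and since $b_1^\ast,\ldots,b_d^\ast$ is a basis of $\bL^\ast$ by Lemma~\ref{dual:special:lattice}\,\myeqref{lem-item:dual:special:c} while $b_{d+1}^\ast=b_1^\ast+\cdots+b_d^\ast$ is a nonzero element of $\bL^\ast$, it suffices to show $w(T,b_i^\ast)<1$ for each $i\in\{1,\ldots,d+1\}$; the claim then follows from the definition \myeqref{eq:def:WKL} of $W(T,\bL)$.

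First I would make the vectors $r\,b_i^\ast$ explicit as integer vectors. Using $a_\ell=(\ell-1)k+1$ and $r=dk+1$, the formula $b_i^\ast=\frac1r a-\sum_{\ell=i+1}^d e_\ell$ from Lemma~\ref{dual:special:lattice}\,\myeqref{lem-item:dual:special:c} gives, for $1\le i\le d$,
\[
  r\,b_i^\ast=\sum_{\ell=1}^i\bigl((\ell-1)k+1\bigr)e_\ell-\sum_{\ell=i+1}^d(d+1-\ell)k\,e_\ell
\]
(the last sum empty for $i=d$, so $r\,b_d^\ast=a$), and, via the identity $d\bigl((\ell-1)k+1\bigr)-(\ell-1)r=d+1-\ell$, one obtains $r\,b_{d+1}^\ast=\sum_{\ell=1}^d(d+1-\ell)e_\ell$. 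In particular the first coordinate of $r\,b_i^\ast$ equals $1$ for $i\le d$ and $d$ for $i=d+1$.

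The main step is then to evaluate $\sprod{r\,b_i^\ast}{t}$ as $t$ runs through $T$, by splitting $T$ into the segment $\{0,\ldots,k\}e_1$ and the segments $\{0,\ldots,k-1\}e_1+e_j$ with $j\in\{2,\ldots,d\}$, and reading off the resulting sets of integers from the displayed formulas. For $i\le d$ I expect the union over the $d$ segments to be the block $\{-(d-i)k,\ldots,ik\}$ of $r=dk+1$ consecutive integers; for $i=d+1$ the $d$ segments produce all residue classes modulo $d$, each within a suitable range, and their union is $\{0,\ldots,r-1\}$. In each case the value set has maximum minus minimum equal to $r-1$, hence $w(T,b_i^\ast)=(r-1)/r<1$, finishing the proof.

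The only obstacle is the bookkeeping in this last step: checking that, for $i\le d$, the blocks coming from the individual segments tile the interval $\{-(d-i)k,\ldots,ik\}$, and, for $i=d+1$, that the residue classes and ranges contributed by the $d$ segments fit together so that the overall minimum is $0$ and the overall maximum is $r-1$. No geometric or number-theoretic input beyond Lemma~\ref{dual:special:lattice} is needed; for $i=d$ the computation is exactly the bijectivity of $t\mapsto\sprod{a}{t}$ from $T$ onto $\{0,\ldots,r-1\}$ that was already used in the proof of Lemma~\ref{gen:example:is:indeed:tiling}.
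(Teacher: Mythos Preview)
Your proposal is correct and follows essentially the same strategy as the paper: both reduce to bounding $\sprod{r\,b_i^\ast}{t}$ as $t$ ranges over the $d$ lattice segments making up $T$. The paper argues pairwise, verifying $\sprod{t-t'}{u_i}\le kd$ through a five-case split on whether $t,t'$ equal $o$ and on $i\le d$ versus $i=d+1$; your version is organised a bit more cleanly, computing the entire value set $\{\sprod{r\,b_i^\ast}{t}:t\in T\}$ and observing it is a block of $r$ consecutive integers (namely $\{-(d-i)k,\ldots,ik\}$ for $i\le d$ and $\{0,\ldots,r-1\}$ for $i=d+1$), which yields $w(T,b_i^\ast)=(r-1)/r$ exactly rather than just the bound $<1$.
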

\begin{proof} Consider $u_i := r b_i^\ast \in \Z^d$ for $i \in \{1,\ldots,d+1\}$. We need to show $w(T,u) < 1$ for each $u \in \{b_1^\ast, \ldots, b_{d+1}^\ast\}$ or, equivalently, $w(T,u_i) < r$ for each $i \in \{1,\ldots,d+1\}$. The condition $w(T,u_i)<r$ is equivalent to $\sprod{t-t'}{u_i} < r$ for all $t, t' \in T$, so let $i\in\{1,\ldots,d+1\}$ and $t,t'\in T$. Since~$\sprod{t-t'}{u_i} \in \Z$ we can reformulate $\sprod{t-t'}{u_i} < r$ as $\sprod{t-t'}{u_i} \le r-1 = kd$. Clearly,
\[
	T= \{o\} \cup \setcond{m e_1 + e_j }{m \in \{0,\ldots,k-1\},\, j \in \{1,\dots,d\}}.
\]
In view of this equality, whenever $t \ne o$, we use the representation $t = m e_1 + e_j$  with $m \in \{0,\ldots,k-1\}$ and~$j \in
\{1,\ldots,d\}$. Analogously, whenever $t' \ne o$, we use the representation $t' = m' e_1 + e_{j'}$ with $m' \in \{0,\ldots,k-1\}$ and $j' \in \{1,\ldots,d\}$.

\emph{Case~1: $t=o=t'$.} In this case, the inequality $\sprod{t-t'}{u_i} \le  kd$ is trivial.

\emph{Case~2: $i\leq d, \ t \ne o, \ t' = o$.} We need to show $\sprod{u_i}{t}
\le kd$. A direct computation shows
\begin{align*}
	\sprod{u_i}{t} = m + (j-1) k + 1 - r \sum_{\ell=i+1}^d \sprod{e_\ell}{e_j},
\end{align*}
Let~$\lambda(i,j) := \sum_{\ell = i+1}^{d}\sprod{e_\ell}{e_j}$. One has $\lambda(i,j) \in \{0,1\}$ with $\lambda(i,j) = 1$ if and only if $i< j$.  The inequality $\sprod{u_i}{t} \le kd$ is equivalent to $m + (j-1) k + 1 - r \lambda(i,j) \le k d$. The latter inequality is valid in view of $m \le k-1$, $j \le d$, and $\lambda(i,j) \ge 0$. 

\emph{Case~3: $i \leq  d, \ t \ne o, \ t' \ne o$.} 
We have to show $\sprod{t}{u_i} - \sprod{t'}{u_i} \le kd$ or, equivalently, $m -m' + (j-j') k - r (\lambda(i,j) - \lambda(i,j')) \le kd $. To prove this, we first consider the subcase $\lambda(i,j) - \lambda(i,j') \ge 0$, where the inequality follows from $m -m' \le k-1$ and $j-j' \le d-1$. In the subcase $\lambda(i,j) - \lambda(i,j') < 0$, one has $\lambda(i,j) =0$ and $\lambda(i,j') = 1$, which means $j \le i < j'$. Consequently $j-j' \le -1$ and, using $m-m' \le k-1$, we obtain the inequality in question. 

\emph{Case~4: $i \leq d, \ t =o, \ t' \ne o$.} We need to verify $\sprod{u_i}{t'} \ge -kd$ or, equivalently, $m' + (j'-1) k + 1 - r\lambda(i,j') \ge -kd$. The latter follows from $m' \ge 0$, $j' \ge 1$, $r= kd+1$, and $\lambda(i,j') \le 1$.

\emph{Case~5: $i=d+1$}.  A direct computation shows 
\(
	u_{d+1}  = \sum_{\ell=1}^d ( d - \ell + 1 ) e_\ell
\)
and hence $\sprod{u_{d+1}}{t} = m d + (d - j + 1)$ for $o\neq t= m e_1 + e_j$. Three subcases similar to Cases 2--4 can be considered and analogous arguments give the desired bounds; we omit the details. 
\end{proof}

Using Lemma~\ref{lem:sU_T:for:AveL12:gen:example} we construct pairs $S \oplus T$ and $S \oplus (-T)$ of nontrivially homometric sets for $(\bM,\bL,T) \in \cT^d$ from Example~\ref{AveL12:gen:example}. 

\begin{example}[A generalization of Example~\ref{AveL12:example}]\label{AveL12:example:generalized}
Let~$(\bM,\bL,T) \in \cT^d$ as in Example~\ref{AveL12:gen:example} (see \myeqref{AveL12:gen:example:prelim}--\myeqref{AveL12:gen:example:bL}), let $b_1^\ast,\ldots,b_d^\ast$ as in Lemma~\ref{dual:special:lattice}\,\myeqref{lem-item:dual:special:c}, and let $b_1,\ldots,b_d$ be the dual basis of $b_1^\ast,\ldots, b_d^\ast$. Using Lemma~\ref{dual:special:lattice}\,\myeqref{lem-item:dual:special:d} for the vector $a$ defined by \myeqref{AveL12:gen:example:a}, we get
\begin{equation}\label{eq:exa:generalization-of-old-exa-basis}
	b_i = 
		\begin{cases}
			(k+1) e_1 - e_2 & \text{if} \ i=1,
			\\ k e_1 + e_i - e_{i+1} & \text{if} \ 2 \le i \le d-1,
			\\ k e_1 + e_d & \text{if} \ i=d.
		\end{cases}
\end{equation}
The set $S:=\{o,b_1,\ldots,b_d\}$ is noncentrally symmetric, $\bL$-convex (because $b_1,\ldots,b_d$ is a basis of~$\bL$), and it satisfies $U(S,\bL)=\{-b_1^\ast,\ldots, -b_d^\ast,\sum_{i=1}^{d}b_{i}^{\ast}\}\subseteq W(T,\bL)$. By Theorem~\ref{thm:ABC}\,\myeqref{item:ABC:A}\,$\Rightarrow$\,\myeqref{item:ABC:B} and Proposition~\ref{prp:dir:sum:hom}, both $S \oplus T$ and $S \oplus (-T)$ are $\bM$-convex and the pair $S \oplus T$, $S \oplus (-T)$ is nontrivially homometric. Clearly, the tiling $(\bM,\bL,T)$ generates many pairs of nontrivially homometric and $\bM$-convex sets. For example, $S$ could be chosen as
\[
	\setcond{ i_1 b_1 + \cdots + i_d b_d}{ i_1,\ldots, i_d \in \{0,\ldots,N\}, \ i_1 + \cdots + i_d \le M}	
\]
with $M, N \in \N$ and $M < d N$. More generally, $S$ could be chosen as any other noncentrally symmetric, finite, $d$-dimensional, and $\bL$-convex set whose convex hull has only facets with normal vectors parallel to elements of $\{\pm b_1^\ast,\ldots, \pm b_d^\ast,\pm\sum_{i=1}^{d}b_{i}^{\ast}\}$; see Corollary~\ref{nontrivial-S-cor}.
\end{example}

Note that Example~\ref{AveL12:example:generalized} is nothing else than Example~\ref{AveL12:example} in the case $d=2$.

\begin{figure}
  \includegraphics{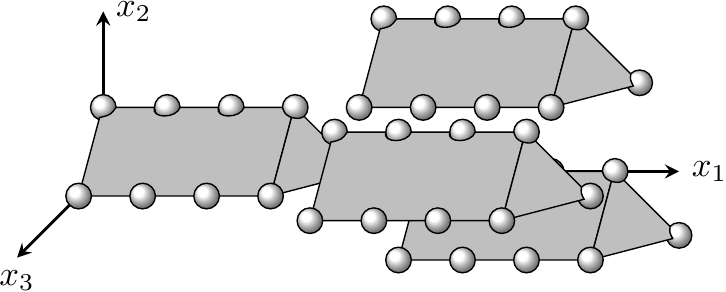}\qquad%
  \includegraphics{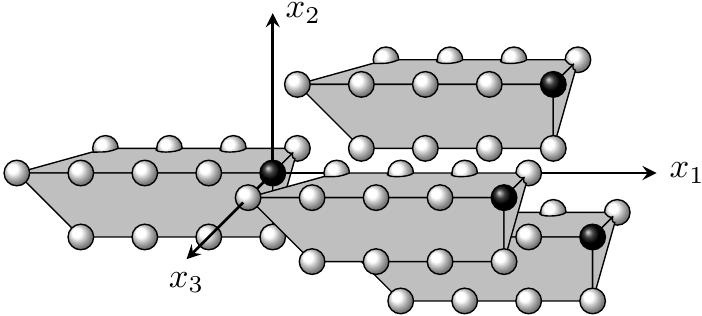}%
  \caption{A pair of nontrivially homometric $\bM$-convex sets $S \oplus T$, $S \oplus (-T)$ arising from 
Example~\ref{AveL12:example:generalized}, with $d=3$ and $k=4$. The points of $S=\{(0,0,0),(5,-1,0),(4,1,-1),(4,0,1)\}$ are drawn as black balls. The gray polytopes are the sets $s \pm \conv(T)$ with $s \in S$.}
   \label{fig:exa:simplex-with-one-facet-stretched-out}
\end{figure}%

\paragraph{Examples arising from Cartesian products.} Cartesian products can be used to generate examples of tilings, lattice-convex direct sums, and nontrivially homometric pairs for $d\geq 3$ (see also \cite[\S7]{Bianchi05} for analogous examples in the continuous setting). If, for $i \in \{1,2\}$, the subsets $K_i$ and $H_i$ of $\R^{d_i}$ ($d_i \in \N$) are homometric, then also the sets $K_1 \times K_2$ and $H_1 \times H_2$ are homometric. Further, if at least one of the two pairs $K_1, H_1$ or $K_2, H_2$ is nontrivially homometric, then also the pair $K_1 \times K_2$, $H_1 \times H_2$ is nontrivially homometric.

This construction inherits the properties that we imposed in our main results. If $(\bM_i,\bL_i,T_i) \in \cT^{d_i}$ for $i \in \{1,2\}$, then also $(\bM_1 \times \bM_2 , \bL_1 \times \bL_2, T_1 \times T_2) \in \cT^{d_1 + d_2}$. Further, two direct sums $S_i \oplus T_i$ with $i \in \{1,2\}$ generate the direct sum $(S_1 \oplus T_1) \times (S_2 \oplus T_2) = (S_1 \times S_2) \oplus (T_1 \times T_2)$ and, if $S_i \oplus T_i$ is $\bM_i$-convex, then $(S_1 \times S_2) \oplus (T_1 \times T_2)$ is $(\bM_1 \times \bM_2)$-convex. All of the above observations about the Cartesian products have straightforward proofs.

For applying our main results to examples arising from the Cartesian product, we need to analyze the set $W(T_1 \times T_2,\bL_1 \times \bL_2)$. We get the straightforward relation $W(T_1,\bL_1) \times \{o_2\} \cup \{o_1\} \times W(T_2,\bL_2) \subseteq W(T_1 \times T_2, \bL_1 \times \bL_2) \subseteq W(T_1,\bL_1) \times W(T_2,\bL_2)$, where by $o_i$ we denote the origin of $\R^{d_i}$. The latter relation follows from the equality $w(T_1 \times T_2, (u_1,u_2)) = w(T_1,u_1) + w(T_2,u_2)$ for $u_1 \in \R^{d_1}$ and $u_2 \in \R^{d_2}$. If, for  every $i \in \{1,2\}$, the set $T_i$ is $d_i$-dimensional, we even get the equality $W(T_1 \times T_2, \bL_1 \times \bL_2) = W(T_1, \bL_1) \times \{o_2\} \cup \{o_1\} \times W(T_2,\bL_2)$ in view of the inequalities $w(T,u_i) \ge \frac{1}{2}$ for $i \in \{1,2\}$ and $u_i \in \bL_i^\ast \setminus \{o_i\}$ (see Lemma~\ref{thm:if-T-covers-and-not-flat}).

\paragraph{Nontrivially homometric pairs with $\dim (T) < d$ and $\conv(S)$ having arbitrarily many facets.}

For $d\geq 3$, we construct a tiling $(\bM,\bL,T) \in \cT^d$ that contains, for each given $\ell\in\N$, a pair $S \oplus T$, $S \oplus (-T)$ of nontrivially homometric $\bM$-convex sets with $\conv(S)$ being a $d$-dimensional polytope having at least $\ell$ facets. The construction can be carried out for every $d \ge 3$, but for the sake of simplicity we stick to $d=3$ (which already gives rise to examples in higher dimensions using Cartesian products). 

\begin{example}\label{exa:parabola-construction}
	Consider $(\bM',\bL',T') \in \cT^2$ such that $T'$ is noncentrally symmetric and $W(T',\bL')$ contains two linearly independent vectors $b_1', b_2'$. For example, use the tiling from Figure~\ref{fig:exa-Dirichlet}\,(b). 
	
	We `lift' this tiling in $\cT^2$ to a tiling $(\bM, \bL, T)$ in $\cT^3$ given by $\bM = \bM' \times \Z$, $\bL  = \bL' \times \Z$, $T := T' \times \{0\}$. Changing coordinates appropriately we assume $b_1' = e_1, b_2' = e_2$; so $\bL' = \Z^2$ and $\bL = \Z^3$. In view of $w(T', e_i) < 1$ we get $w(T, (e_i,m)) = w(T', e_i) < 1$  for each~$i \in \{1,2\}$ and $m \in \Z$. 
	
	Choose any $N \in \N$ and let $P$ be the integral polygon $P$ in $\R^2$ with the $2N+1$ vertices $(i^2,i)$ lying on a parabola, where $i \in \{-N,\ldots,N\}$. Each edge of $P$ has a normal vector of the form $(1,m)$ with $m \in \Z$. Let~$S$ to be the set of all integral points in the integral polytope $[0,1] \times P$. So $\conv(S) = [0,1] \times P$ and the polytope $[0,1] \times P$ has $2N + 3$ facets: Two facets have normal vector $(1,0,0)$ and the remaining facets have normal vectors of the form $(0,1,m)$ with $m \in \Z$.  Since $w(T,(1,0,0)) = w(T', (1,0)) < 1$ and $w(T,(0,1,m)) = w(T', (0,1)) < 1$, we conclude by Theorem~\ref{thm:ABC} that $S \oplus T$ and $S \oplus (-T)$ are $\bM$-convex. Since $S$ and $T$ are not centrally symmetric, the latter is a pair of nontrivially homometric sets (see Proposition~\ref{prp:dir:sum:hom}). 
	\end{example} 
	
	This example also shows that the $d$-dimensionality assumption in Theorem~\ref{finiteness-thm} is not superfluous, because for $\dim(T)<d$, the set $W(T,\bL)$ can be infinite. 

\paragraph{Lattice-convex direct sums can have complicated summands.}\label{par:exa:complicated} 

We give `irregular' examples that do not fulfill various conditions imposed in our main results.
We start with choices for $S,T\subseteq \bM$ such that $S\oplus T$ is $\bM$-convex, but neither $S$ nor $T$ is lattice-convex with respect to any sublattice of $\bM$. A planar example is given in Figure~\ref{fig:exa-misc}\,(a); there are also examples for $d=1$:

\begin{example}\label{irreg:ex:dis1} Let~$d=1$, $\bM=\Z$, $S=\{0,1,4,5\}$ and $T=\{0,2,8,10\}$. The set $S\oplus T=\{0,1,\ldots,15\}$ is clearly $\bM$-convex. Since $1 \in S$, every lattice containing $S$ contains $\Z$. Since $2$ is an integer belonging to $\conv(S)$ but not to $S$, the set $S$ is not lattice-convex with respect to any sublattice of $\Z$. Analogously, each lattice containing $T$ contains $2\Z$. The even integer $4$ belongs to $\conv(T)$ but not to $T$, and so $T$ is not lattice-convex with respect to any sublattice of $\Z$.  
\end{example}

\begin{figure}[ht] 
   \ffigbox
  {\caption{(a) shows a $\Z^2$-convex set $S \oplus T$ such that both $S$ and $T$ are not lattice-convex with respect to any sublattice of $\Z^{2}$; (b) shows a lattice-convex set $S \oplus T$ with lattice-convex $T$, but $S$ not being lattice-convex with respect to any sublattice of $\Z^{2}$. The points in $S$ are depicted as black dots and the set $T$  is $\{(0,0),(1,0),(2,1),(1,2)\}$ and $\{0,1,2\} \times \{0,1\}$, respectively.}
  \label{fig:exa-misc}}
  {
  \begin{tabular}{p{4cm}p{4.5cm}}
    (a) \par\quad  \includegraphics{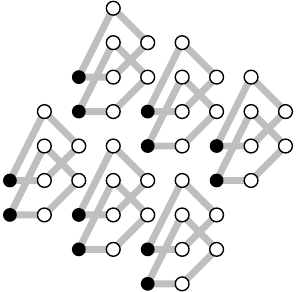} &%
    (b) \par\qquad \includegraphics{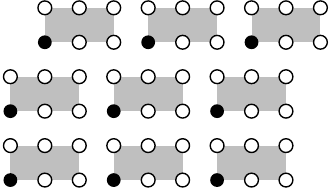} 
  \end{tabular}
  }
\end{figure}%

  There are also examples for  $S\subseteq \bM$ and $T\subseteq\bM$ such that $T$ is $\bM$-convex and $S\oplus T$ is convex, but~$S$ is not $\bL$-convex with respect to any lattice $\bL$. An example for $d=2$ is depicted in Figure~\ref{fig:exa-misc}\,(b). We also give an example in dimension three.
  
\begin{figure}[ht]
  \includegraphics{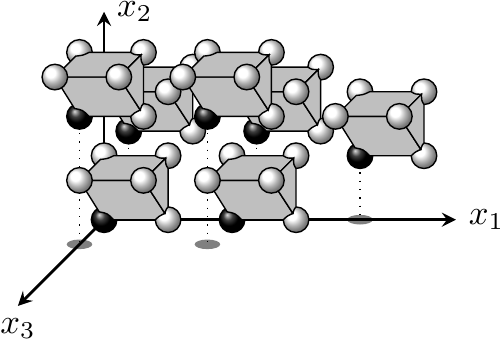}\qquad%
  \includegraphics{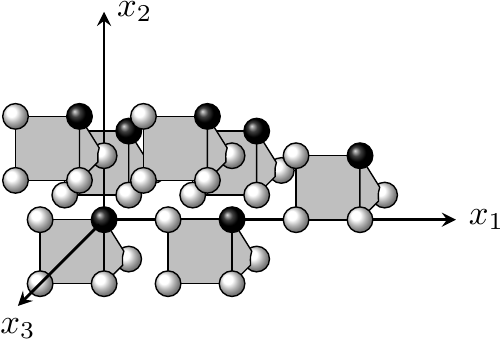}%
   \caption{A nontrivially homometric pair $S \oplus T, S \oplus (-T)$ of $\Z^3$-convex sets in $\R^3$, where $S = \{0,2\} \times \{(0,0), (1,-1), (2,1)\} \cup \{(4,1,0)\}$  and
  $ T =\{0,1\}\times\{ (0,0),(1,0),(1,1) \}$.
The set~$T$ is $\bM$-convex but $S$ is not lattice-convex with respect to any sublattice of $\bM$ (see Example~\ref{irreg:ex:dis3}). The elements of $S$ are depicted as black balls. The gray polytopes are the sets $s \pm \conv(T)$ with $s \in S$.}
   \label{fig:exa:S-not-L-convex}
\end{figure}%

  \begin{example} \label{irreg:ex:dis3}
  Let~$d=3$, $\bM = \Z^3$ and let $S$ and $T$ be given as in Figure~\ref{fig:exa:S-not-L-convex}.
  The set $S$ is not lattice-convex with respect to any sublattice of $\bM$: Each lattice containing $S$ contains $(4,1,0)$ and $(2,0,0)$, hence also $(0,1,0)=(4,1,0)-2(2,0,0)$. The point $(0,1,0)$ does not belong to $S$, but to $\conv(S)$, because $(0,1,0) = \frac{1}{3}(0,0,0) + \frac{1}{3}(0,1,-1) + \frac{1}{3}(0,2,1)$. 

  The set $T$ is clearly $\bM$-convex; $\bM$-convexity of $S\oplus T$ can be checked with Magma; see Section~\ref{sec:magma-code}. 
  One can also verify that  $S\oplus(-T)$ is $\bM$-convex, thus  $S\oplus T$, $S\oplus(-T)$ is a pair of nontrivially homometric $\bM$-convex sets (see Proposition~\ref{prp:dir:sum:hom}).
  \end{example}

Examples~\ref{irreg:ex:dis1} and~\ref{irreg:ex:dis3}  can be `lifted' to higher dimensions using Cartesian products.

\paragraph{Counterexamples to  \myeqref{item:ABC:A}\,$\Leftarrow$\,\myeqref{item:ABC:B} and \myeqref{item:ABC:B}\,$\Leftarrow$\,\myeqref{item:ABC:C} in Theorem~\ref{thm:ABC}.}\label{par:exa:ABCreverse}
In the setting of Theorem~\ref{thm:ABC}, neither \myeqref{item:ABC:A}\,$\Leftarrow$\,\myeqref{item:ABC:B} nor \myeqref{item:ABC:B}\,$\Leftarrow$\,\myeqref{item:ABC:C} holds in general, as the following examples show. 

\begin{example}[Counterexample to \myeqref{item:ABC:A}\,$\Leftarrow$\,\myeqref{item:ABC:B} in Theorem~\ref{thm:ABC}] \label{count:ex:to:a<=b}
	Let~$d \ge 3$, $\bL:= \Z^d$, $\bM = \Z^d + \Z a$ with~$a:=\frac{1}{2}\sum_{i=1}^{d}e_{i}$, $T := \{o,a\}$, and $S := \{o,e_1,\ldots,e_{d}\} \subseteq \bL$. In view of $2 a \in \Z^d$, we get~$\bM = \bL \oplus T$. The assumptions of Theorem~\ref{thm:ABC} are clearly fulfilled. 

The set $\Delta=\conv(\{e_{1},\ldots,e_{d}\})$ is a facet of $\conv(S)$ and $u:=(1,\ldots,1)\in\Z^{d}=\bL^{*}$ is a primitive outer normal of $\Delta$. We have $w(T,u)= \frac{d}{2} \geq1$, so \myeqref{item:ABC:A} from Theorem~\ref{thm:ABC} does not hold.

	We verify the validity of condition \myeqref{item:ABC:B} from Theorem~\ref{thm:ABC}. Let~$x\in \conv(S \oplus T)\cap \bM$. We have to show that $x\in S\oplus T$. One has  $\conv(S \oplus T) = \conv(\{o,e_1,\ldots,e_{d-1}\}) + [o,a]$, so there exist $\lambda_{1},\ldots,\lambda_{d},\mu\geq 0$ such that $\sum_{i=1}^{d}\lambda_{i}\leq 1$, $\mu\leq 1$, and
        $
          x=\sum_{i=1}^{d}\lambda_{i}e_{i} + \mu a = \sum_{i=1}^{d}\left(\lambda_{i}+\frac{1}{2}\mu\right)e_{i}
        $. 
By definition of~$\bM$ and since $x\in\bM$, we either have $\lambda_{i}+\frac{1}{2}\mu\in\Z$ for each~$i\in\{1,\ldots,d\}$ (case~1), or we have $\lambda_{i}+\frac{1}{2}\mu\in\frac{1}{2}\Z\setminus\Z$ for each~$i\in\{1,\ldots,d\}$ (case~2).

We analyze the first case: If $\mu\neq0$, the case assumption gives $\lambda_{i}\geq\frac{1}{2}$ for each~$i\in\{1,\ldots,d\}$. Employing $d\geq 3$, the latter fact contradicts $\sum_{i=1}^{d}\lambda_{i}\leq 1$. So $\mu=0$ and hence $\lambda_{i}\in\{0,1\}$ for each~$i\in\{1,\ldots,d\}$. In view of $\sum_{i=1}^{d}\lambda_{i}\leq 1$ we conclude that $x\in S$, which gives the assertion.

Now we analyze the second case: If $\mu=0$, the case assumption gives $\lambda_{i}\geq\frac{1}{2}$ for each~$i\in\{1,\ldots,d\}$, which contradicts $\sum_{i=1}^{d}\lambda_{i}\leq 1$, again using $d\geq 3$. So $0<\mu\leq 1$. If $\mu=1$, then $\lambda_{i}\in\{0,1\}$ for each~$i\in\{1,\ldots,d\}$ and in view of $\sum_{i=1}^{d}\lambda_{i}\leq 1$ we conclude that $x\in S\oplus T$, giving the assertion in this subcase. We are left with the subcase $0<\mu<1$, in which the bounds on $\lambda_{i}$ and the case assumption give $\lambda_{i}+\frac{1}{2}\mu=\frac{1}{2}$ for each~$i\in\{1,\ldots,d\}$. But this means $x=\sum_{i=1}^{d}\frac{1}{2}e_{i}=a$, hence $x\in S\oplus T$.
\end{example}

\begin{example}[Counterexample to \myeqref{item:ABC:B}\,$\Leftarrow$\,\myeqref{item:ABC:C} in Theorem~\ref{thm:ABC}]\label{count:ex:to:b<=c}
 Let~$d\geq3$, $\bM=\Z^d$, $\bL=d\Z^d$,  $S:=\{o,de_1,\ldots,de_{d}\}$, and $T:=\{0,1,\ldots,d-1\}^d$. Clearly, $(\bM,\bL,T) \in \cT^d$. For each facet~$F$ of $\conv(S)$, the relation $\aff(F) \subseteq \conv(F) + \bL$ involved in \myeqref{item:ABC:C} does not hold, because it is  equivalent to the obvious relation $\R^{d-1} \nsubseteq \conv(\{o,e_1,\ldots,e_{d-1}\}) + \Z^{d-1}$ in dimension $d-1 \ge 2$. Thus, \myeqref{item:ABC:C} holds, as \myeqref{item:ABC:C} imposes nothing on facets $F$ of $\conv(S)$ which do not satisfy $\aff(F) \subseteq \conv(F) + \bL$. On the other hand, \myeqref{item:ABC:B} does not hold, that is, $S \oplus T$ is not $\bM$-convex. In fact, the point $d\sum_{i=1}^{d}e_i$ does not belong to $S \oplus T$ but to $\conv(S \oplus T)$, as this point can be written as the convex combination $\sum_{i=1}^d \frac{1}{d} ( d e_i + (d-1)\sum_{j=1}^{d}e_j)$ of the points $ d e_i + (d-1)\sum_{j=1}^{d}e_j\in S \oplus T$ with $i \in \{1,\ldots,d\}$.
\end{example}

Further counterexamples to \myeqref{item:ABC:A}\,$\Leftarrow$\,\myeqref{item:ABC:B} and \myeqref{item:ABC:B}\,$\Leftarrow$\,\myeqref{item:ABC:C} can be constructed using Examples~\ref{count:ex:to:a<=b} and~\ref{count:ex:to:b<=c}  and taking Cartesian products.

\section{Magma code}\label{sec:magma-code}

We present the Magma code that we used to perform several computations in the context of this manuscript, including the computer enumeration used in Theorem~\ref{thm:complete-classification}. Note that the Magma engine is also available online as the so-called Magma calculator at \url{http://magma.maths.usyd.edu.au/calc/} (currently using version V2.21-8 and restricting the running time to two minutes).

\paragraph{Basic computations.}\label{par:magma:convexity}

For computations in dimension $d \in \N$ we introduce the so-called toric lattice, where the dimension $d$ has to be specified (e.\,g.\ by \lstinline!d:=3;! for the example below):
\begin{lstlisting}
V:=ToricLattice(d);
\end{lstlisting}
In the context this paper, $V$ can be understood as the vector space $\rational^d$ over the field $\rational$;  Magma procedures related to lattices and invoked on objects `living' in $V$ are carried out with respect to the lattice~$\Z^d$.

We can now introduce subsets of $V$. For example, sets $S$ and $T$ in Figure~\ref{fig:exa:S-not-L-convex} can be given by 
\begin{lstlisting}
S:={ V | [0,0,0],[0,1,-1],[0,2,1],[2,0,0],[2,1,-1],[2,2,1],[4,1,0] };
T:={ V | [0,0,0],[1,0,0],[0,1,0],[1,1,0],[0,1,1],[1,1,1] };
\end{lstlisting}

Whether a given set is $\Z^d$-convex can be tested using the function 
\begin{lstlisting}
IsLatticeConvex:=function(K)
   return Points(Polytope(K)) eq K;
end function;
\end{lstlisting}
which compares $\Z^d \cap \conv(K)$ with $K$. Testing whether the sum of $S$ and $T$ is direct can be implemented by comparing the cardinalities of $S$, $T$, and $S+T$: 
\begin{lstlisting}
IsSumDirect:=function(S,T)
   return #S * #T  eq  #{s+t : s in S, t in T};
end function;
\end{lstlisting}
Now calling \lstinline!IsLatticeConvex({s+t : s in S, t in T});! and  \lstinline!IsSumDirect(S,T);! checks lattice-convexity of $S+T$ and tests if the sum of $S$ and $T$ is direct, respectively.

\paragraph{Computing $W(T,\bL)$.}\label{par:magma:UT} For the computation of $W(T,\bL)$ we use the representation $W(T,\bL) = (\bL^\ast \cap \intr(D(T)^\circ)) \setminus \{o\}$. Let~$b_1,\ldots,b_d \in \Q^d$ be linearly independent and let $\bL$ be  the lattice with basis $b_1,\ldots,b_d$. Then $\bL=B(\Z^d)$, where $B$ is the matrix with columns $b_1,\ldots,b_d$. By Proposition~\ref{change:prp}, we get $W(T,\bL) = (B^{-1})^\top \bigl( W(B^{-1}(T), \Z^d) \bigr)$. We use the latter representation for the computation of $W(T,\bL)$. Note that, in Magma, converting the list of vectors $b_1,\ldots,b_d$ to a matrix we get a matrix whose \emph{rows} are $b_1,\ldots,b_d$. Therefore, in the following code, $B$ is initially the list of vectors $b_1,\ldots,b_d$ and, during the computation, $B$ is converted to the matrix with rows $b_1,\ldots,b_d$. In correspondence to this, the elements of $\Q^d$ are interpreted as rows. We also remark that, for a rational polytope $P$ in $\R^d$, the Magma expressions \lstinline!InteriorPoints(P)!, \lstinline!Polar(P)!, and \lstinline!P+(-P)! compute $\Z^d \cap \intr(P)$, $-P^{\circ}$, and  $D(P)$, respectively.  

\begin{lstlisting}[firstnumber=1]
Q:=RationalField(); // the field of rational numbers

W:=function(T,B)
    B:=Matrix(Q,B); // convert B into a matrix
    T:=Matrix(Q,T); // convert T into a matrix
    T:=Rows(T*B^(-1)); // replace T by its image under B^(-1)
    P:=Polytope([ V | Eltseq(t) : t in T ]); // P is conv(T)
    _W_:=InteriorPoints(Polar(P+(-P))) diff {Zero(Dual(V))}; 
    _W_:=Matrix(Q,[Eltseq(z) : z in _W_]); // convert _W_ to a matrix
    _W_:=_W_*Transpose(B^(-1)); // _W_ for the original choice of T
    _W_:=[Dual(V) | Eltseq(z) : z in Rows(_W_) ]; // _W_ as a list
    return _W_;
end function;
\end{lstlisting}

We illustrate how to use the above Magma function by an example. Let~$d=3$ and let $T$ be as in Figure~\ref{fig:exa:simplex-with-one-facet-stretched-out}, so the vertices of $\conv(T)$ can be passed to Magma as follows:

\begin{lstlisting}
vertT:=[ [0,0,0],[4,0,0],[0,1,0],[0,0,1],[3,1,0],[3,0,1] ];
\end{lstlisting}

The lattice $\bL$ can be given by the following base $b_1,b_2,b_3$ obtained from~\myeqref{eq:exa:generalization-of-old-exa-basis} on page~\pageref{eq:exa:generalization-of-old-exa-basis}:
\begin{lstlisting}
B:=[[5,-1,0],[4,1,-1],[4,0,1]];
\end{lstlisting}

Now the set $W(T,\bL)$ can be evaluated using the expression \lstinline!W(vertT,B);!

\paragraph{The enumeration procedure in the proof of Theorem~\ref{thm:complete-classification}.} We present a complete implementation of the enumeration procedure used in the proof of Theorem~\ref{thm:complete-classification}.

We introduce a two-dimensional toric lattice:
\begin{lstlisting}[firstnumber=1]
V:=ToricLattice(2);
\end{lstlisting}

The following code computes the base $b_1= (\ell,0), b_2= (s,h)$ of $\bL$  from the input $\ell$, $h$, and $s$:
\begin{lstlisting}[firstnumber=last]
Base:=function(l,h,s)
   return [ V | [l,0],[s,h]];
end function;
\end{lstlisting}

The corresponding dual base $b_1^\ast,b_2^\ast$ can be computed as follows:

\begin{lstlisting}[firstnumber=last]
DualBase:=function(l,h,s)
   return [ Dual(V) | [1/l,-s/(l*h)],[0,1/h]]; 
end function;
\end{lstlisting}

Given the parameters $\ell$, $h$, $s$, and $q=(q_1,q_2)$, the following functions generate the tiles $T_{q}=\Z^{2}\cap\sum_{i=1}^2 [ (q_i + n_i)/L, (q_i+L)/L ] b_i$ from Lemma~\ref{lem-enum-T-given-bL}. We compute the two segments $[(q_i + n_i)/L, (q_i+L)/L ]b_{i}$ with $i \in \{1,2\}$, take their Minkowski sum and, eventually, return the convex hull of the integral points in the Minkowski sum (using \lstinline!IntegralPart!):
\begin{lstlisting}[firstnumber=last]
SpecialSegment:=function(q,n,L,b)
   return Polytope( [(q+n)/L*b, (q+L)/L*b] );
end function;

Tile:=function(l,h,s,q1,q2,n1,n2)
   L:=l*h;
   b:=Base(l,h,s);
   I1:=SpecialSegment(q1,n1,L,b[1]);
   I2:=SpecialSegment(q2,n2,L,b[2]);
   return IntegralPart(I1+I2);
end function;
\end{lstlisting}

The following procedure performs a search of the relevant tiles $T$ arising from the base defined by the given parameters $\ell,h,s$. The possible sets $T$ are filtered with respect to three criteria: $T$ must be two-dimensional, the condition $w(T,b_1^\ast+b_2^\ast) < 1$ must hold, and $w(T,\bM)\geq 2$ must hold, since in the case $w(T,\bM)=1$ all relevant tilings $\Z^2 = T \oplus \bL$ have already been classified in~\cite{AveL12}. We use \lstinline!Width(P,u)! (available in Magma V2.21-8, currently used in the Magma calculator) to compute the width function of a polytope $P$ for direction $u$. The expression \lstinline!Width(P)! evaluates the lattice width of a polytope $P$ with respect to the integer lattice.
Note that in terms of $\ell ,h,s$, the vector $n=(n_1,n_2)$ from Lemma~\ref{lem-enum-T-given-bL} is defined by $n_1=\gcd(h,s)$ and $n_2=\ell$. (The functions \lstinline!IndentPush! and \lstinline!IndentPop! merely produce indented output.)
\begin{lstlisting}[firstnumber=last]
SearchForTilesWithBase:=procedure(l,h,s)
   b:=Base(l,h,s);
   print "Searching for tiles arising from the base", b;
   L:=l*h;
   db:=DualBase(l,h,s);
   DiagonalDirection:=db[1]+db[2];
   n1:=GCD(h,s);
   n2:=l;
   for q1:=0 to L-1 by n1 do
      for q2:=0 to L-1 by n2 do
         T:=Tile(l,h,s,q1,q2,n1,n2);
         if 
            IsMaximumDimensional(T) 
            and Width(T,DiagonalDirection) lt 1 
            and Width(T) gt 1
         then
            IndentPush();
            print "Found tile with vertices ", Vertices(T);
            IndentPop();
         end if;
      end for;
   end for;
end procedure;
\end{lstlisting}

The following procedure searches the associated sets $T$ for all relevant bases, the latter defined by triples $\ell,h,s$ and with a given determinant $L$. Since $h \le 2$ yields a lattice with  $\Delta=\conv(\{o,b_1,b_2\})$ satisfying~$w(\Delta,\Z^{2})\leq 2$,  the condition $h\geq 3$ is included to ensure $w(\Delta,\Z^{2})\geq 3$.  
\begin{lstlisting}[firstnumber=last]
SearchForBasesWithDet:=procedure(L)
   print "Searching for bases with determinant", L;
   for l in Divisors(L) do
      h:=L div l;
      if h ge 3 then
         for s in [0..h-1] do
            Delta:=Polytope([[0,0],[l,0],[s,h]]);
            if 
               ( 12 le L and L le 16 and Width(Delta) eq 4 )
               or
               (  7 le L and L le 18 and Width(Delta) eq 3 )
            then
               IndentPush();
               SearchForTilesWithBase(l,h,s);
               IndentPop();
            end if;
         end for;
      end if;
   end for;
end procedure;
\end{lstlisting}

Finally, the following code searches for the relevant $(\Z^2,\bL, T)\in\cT^{2}$ with  $\det(\bL)\in\{7,\ldots,18\}$:
\begin{lstlisting}[firstnumber=last]
for L in [7..18] do
   IndentPush();
   SearchForBasesWithDet(L);
   IndentPop();
end for;
\end{lstlisting}

The running time of the code on a currently regular desktop  computer is about 3 minutes. A~reader willing to check our results may use the Magma calculator. To circumvent the time limit of two minutes for the Magma calculator, it is reasonable to run the code twice, replacing the whole range $\{7,\ldots,18\}$ for the determinant of $\bL$ by two ranges, say $\{7,\ldots,15\}$ and $\{16,\ldots,18\}$. For both of these ranges, the Magma calculator finishes the computation in about 100 seconds.

\paragraph{Acknowledgement.} We thank the anonymous referees for valuable suggestions regarding a previous version of this paper.

\bibliographystyle{amsalpha}
\bibliography{lit}
\end{document}